\newcommand\ud{\textrm{d}}
\newcommand\kH{\mathfrak{H}}
\newcommand\E{\mathbb{E}}
\newcommand\F{\mathbb{F}}
\renewcommand\P{\mathbb{P}}		
\newcommand\Q{\mathbb{Q}}
\newcommand\T{\mathbb{T}}
\newcommand\Z{\mathbb{Z}}
\renewcommand\AA{\mathcal{A}}		
\newcommand\FF{\mathcal{F}}
\newcommand\HH{\mathcal{H}}
\newcommand\KK{\mathcal{K}}
\newcommand{\vd}{\delta}
\newcommand{\ve}{\varepsilon}
\newcommand{\vk}{\kappa}
\newcommand{\vl}{\lambda}
\newcommand{\vD}{\Delta}
\newcommand{\vT}{\Theta}
\newcommand{\vg}{\gamma}
\renewcommand{\l}{\left}
\renewcommand{\r}{\right}
\newcommand{\eqdef}{\overset{\text{def}}{=}}
\newcommand{\card}[1]{\left| #1 \right|}
\newcommand{\ceil}[1]{{\left\lceil #1 \right\rceil}}
\newcommand{\usim}[2]{\underset{#1 \to #2}{\sim}}
\newcommand{\ulim}[2]{\underset{#1 \to #2}{\longrightarrow}}
\newcommand{\pfrac}[2]{{\left(\frac{#1}{#2}\right)}}
\newcommand{\figref}[1]{figure \ref{#1}}
\newcommand{\dgr}{\mathrm{d}_{\mathrm{gr}}}
\newcommand{\Htr}{\HH^\text{tr}}
\newcommand{\Hull}{B^\bullet}
\newcommand{\UIPQ}{\textsc{uipq}\xspace}
\newcommand{\BGW}{Bienaymé-Galton-Watson\xspace}
\newcommand{\resp}{resp.\xspace}
\newtheorem{theorem}{Theorem}
\newtheorem{lemma}[theorem]{Lemma}
\newtheorem{proposition}[theorem]{Proposition}
\newtheorem{corollary}[theorem]{Corollary}
\newtheorem{conjecture}[theorem]{Conjecture}
\newcommand{\propref}{Proposition \ref}
\newcommand{\thref}{Theorem \ref}
\newcommand{\corref}{Corollary \ref}
\newcommand{\lemref}{Lemma \ref}
\renewcommand{\figref}{Figure \ref}
\newenvironment{romenum}{
\begin{enumerate}[label=(\roman*)]
}{
\end{enumerate}
}
\newcommand{\Qtr}{\Q^{\text{tr}}}
\DeclareMathOperator{\arccosh}{cosh^{-1}}
\renewcommand{\card}[1]{\#{#1}}
\newcounter{cstes}
\newcommand{\UIPT}{\textsc{uipt}\xspace}
\newcommand{\mult}{\operatorname{mult}}
\newcommand{\CVS}{\textsc{cvs} bijection\xspace}
\author{Thomas \textsc{Lehéricy}}
\title{Uniform mixing time and bottlenecks in uniform finite quadrangulations}
\begin{document}

\maketitle

\begin{abstract}
We prove a lower bound on the size of bottlenecks in uniform quadrangulations, valid at all scales simultaneously. 
We use it to establish upper bounds on the uniform mixing time of the lazy random walk on uniform quadrangulations, as well as on their dual. 
The proofs involve an explicit computation of the Laplace transform of the number of faces in truncated hulls of the uniform infinite plane quadrangulation. 
\end{abstract}

\tableofcontents

\section{Introduction}

\paragraph{Uniform mixing time in uniform quadrangulations.}

A \emph{rooted planar map} is an embedding of a planar graph on the sphere with no edge-crossing, seen up to orientation-preserving homeomorphisms, equipped with a distinguished oriented edge called the \emph{root edge}. A quadrangulation is a rooted planar map such that all its faces have degree 4. In this paper, we will be interested in type I quadrangulations, where multiple edges are allowed.

If $Q$ is a quadrangulation, denote its vertex set, \resp edge set, face set, by $V(Q)$, \resp $E(Q), F(Q)$, and write $|Q| \eqdef |F(Q)|$ for its number of faces. The degree of a vertex $x$ of $Q$ is denoted by $\deg_Q(x)$, and the number of edges with endpoints $x$ and $y$ is denoted by $\mult_Q(x,y)$. 
We are interested in the lazy random walk on $Q$, which is a reversible Markov chain on $V(Q)$ with transition probabilities 
\begin{equation*}
p_Q(x,y) = \begin{cases}
               1/2 &\text{ if }x=y , \\
               \frac{\mult_Q(x,y)}{2\deg_Q(x)} &\text{ if }x \neq y ,
            \end{cases}
\end{equation*}
and stationary distribution 
\begin{equation}
\pi_Q(x) = \frac{\deg_Q(x)}{4|Q|} .
\end{equation}
The choice of the lazy random walk over the simple random walk is technical in nature. In particular, the lazy random walk is aperiodic even when the simple random walk is not. we fully expect our results to still hold for the simple random walk, provided the non-aperiodicity of the walk on the (bipartite) quadrangulations is properly handled. 

One may check that $\sum_{y\in V(Q)}\deg_Q(y) = 2|E(Q)| = 4|Q|$, so $\pi_Q$ is a probability distribution. 
We write $p^k_Q(x,y)$ for the $k$-step transition probabilities of the lazy random walk and define the $\ve$-uniform mixing time of the lazy random walk
\begin{equation*}
\tau_Q(\ve) \eqdef \inf \l\{  k : \forall x,y\in V(Q), \  \l|\frac{p_Q^k(x,y)-\pi_Q(y)}{\pi_Q(y)} \r| \leq \ve \r\} .
\end{equation*}

Let $Q_n$ be a uniform quadrangulation with $n$ faces. Our first theorem provides an upper bound on the mixing time of the lazy random walk in $Q_n$.

\begin{theorem}
\label{Th_mixing_time}
For every $\ve,\vd>0$, with probability going to $1$ as $n \to \infty$,
\begin{equation*}
\tau_{Q_n}(\ve) \leq n^{3/2+\vd} .
\end{equation*}
\end{theorem}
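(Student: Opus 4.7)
The approach is the classical spectral route: bound $\tau_{Q_n}(\ve)$ via the spectral gap of the lazy walk, bound the gap via Cheeger's inequality, and control the Cheeger constant using the bottleneck lower bound announced in the abstract. Concretely, since the lazy walk on $Q$ is reversible with $\pi_Q(y)\geq 1/(4|Q|)$, the standard $L^2$-contraction argument for reversible chains gives
\[ \tau_Q(\ve)\leq \lambda_Q^{-1}\log(4|Q|/\ve), \]
where $\lambda_Q$ is the spectral gap. Cheeger's inequality yields $\lambda_Q\geq \Phi_Q^2/2$, with the conductance
\[\Phi_Q=\min_{S\subset V(Q):\,\pi_Q(S)\leq 1/2}\frac{|\partial_E S|}{2\sum_{x\in S}\deg_Q(x)}\]
(using the reversibility identity $\pi_Q(x)p_Q(x,y)=\mult_Q(x,y)/(8|Q|)$ for $x\neq y$; $\partial_E S$ denotes the edge boundary of $S$). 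It thus suffices to prove that, \whp, $\Phi_{Q_n}\geq n^{-3/4-\vd/4}$.

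This conductance lower bound would in turn be deduced from an isoperimetric statement of the form: \whp, every $S\subset V(Q_n)$ with $|S|\leq n/2$ satisfies $|\partial_E S|\geq |S|^{1/4-o(1)}$. The exponent $1/4$ is calibrated to yield exactly $n^{3/2}$ in the worst case $|S|\sim n$. Such an isoperimetric bound is precisely the \emph{bottleneck at all scales} estimate announced in the abstract, and would be established first in the \UIPQ via the explicit Laplace transform of the volume of truncated hulls (the other ingredient announced in the abstract), which gives tight control on the joint law of hull perimeter and volume at every scale. The transfer from the \UIPQ to the finite model $Q_n$ would rely on the standard absolute continuity of $Q_n$ with respect to the \UIPQ on bounded neighborhoods.

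The main obstacle, beyond the Laplace transform computation itself, is extending the bottleneck bound from hull-shaped sets to arbitrary $S\subset V(Q_n)$. By planarity, the edge boundary of $S$ decomposes into a disjoint union of simple cycles in the dual map, and a separator argument should reduce a small $|\partial_E S|$ to the existence of a short simple cycle enclosing a macroscopic portion of $Q_n$, which is precisely what the \UIPQ bottleneck bound forbids. The union bound over the $O(\log n)$ dyadic scales of $|S|$ and over candidate separating cycles will be absorbed in the $n^{o(1)}$ loss, which is what forces the arbitrary $\vd$ in the exponent of the theorem.
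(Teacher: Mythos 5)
Your overall architecture coincides with the paper's: the paper also goes through the conductance (invoking Jerrum--Sinclair's Corollary 2.3, which packages exactly your spectral-gap-plus-Cheeger step for the uniform mixing time) and reduces everything to an all-scales bottleneck bound, proved via the Laplace transform of truncated hull volumes in the \UIPQ and an absolute-continuity transfer to $Q_n$. However, two of your intermediate steps have genuine problems. First, the isoperimetric inequality you propose, $|\partial_E S|\geq |S|^{1/4-o(1)}$ for all $S$ with $|S|\leq n/2$, is miscalibrated: it has the right strength only at the macroscopic scale $|S|\asymp n$, and at intermediate scales it is too strong to be true. The correct all-scales statement (Theorems \ref{Th_isoperimetric_inequality} and \ref{Th_isoperimetric_inequality_V}) has the form $|\partial_E S|\gtrsim |S|^{3/4}\,n^{-1/2-o(1)}$, which is strictly weaker than yours for $|S|<n$ and becomes near-trivial at scale $|S|\approx n^{2/3}$ --- consistent with the fact that uniform quadrangulations contain $2$-cycles enclosing $n^{2/3+o(1)}$ faces, which would contradict your version. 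The weaker, correct form still yields $\Phi_{Q_n}\geq n^{-3/4-o(1)}$ (after controlling $\sum_{x\in S}\deg_{Q_n}(x)$ via the \whp bound $\vD_n\leq \ln n$ of Lemma \ref{Lemma_max_deg_Qn}, a step you also need but do not mention), so the plan survives; but the statement you set out to prove is, in all likelihood, false.

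Second, your reduction of arbitrary $S$ to short separating cycles cannot be closed by ``a union bound over candidate separating cycles'': the number of cycles of length $\ell$ through a fixed vertex grows exponentially in $\ell$, and for the relevant lengths $\ell\approx n^{1/4}$ this overwhelms any polynomial probability estimate. The paper circumvents this with a covering argument (Proposition \ref{Prop_covering_hulls_with_controlled_volume}): at each dyadic scale $R$ it exhibits only $n^{1+o(1)}R^{-4}$ balls of radius $R/2$ covering $Q_n$, whose $R$-hulls have volume at most $R^{4/3}n^{2/3+o(1)}$ with high probability; a set whose boundary is connected and of size about $R$ is then trapped inside one such hull, and a separate planarity argument reduces general $S$ to the connected-boundary case. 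You need some such replacement for the cycle union bound. Note also that for vertex sets the edge boundary $E(A,A^c)$ is not a disjoint union of dual cycles; the paper first passes to the set $S(A)$ of faces incident to $A$, with $|\partial S(A)|\leq 2|E(A,A^c)|$, and applies the face version of the isoperimetric inequality.
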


Our bound relies on a known result that relates the uniform mixing time of the lazy random walk on a graph to the size of ``bottlenecks'' \cite{jerrum1989approximating}, i.e. small sets that separate the graph into two large connected components. 
The narrower the bottlenecks, the harder it is for the random walk to cross them, and the longer the mixing time. Conversely, if there is no very narrow bottleneck then the mixing time will not be too large. 
 The bulk of this article is thus dedicated to showing that bottlenecks cannot be too narrow, see Theorem \ref{Th_isoperimetric_inequality} and Corollary \ref{Cor_cheeger}.

We could derive from Corollary \ref{Cor_cheeger} and \cite[Theorem 7.4]{levin2017markov} a lower bound of the form $n^{3/4+o(1)}$ on the mixing time in total variation for the lazy random walk. However, $3/4$ is in all likelihood not the optimal exponent: \cite{gwynne2020anomalous} proved that the simple random walk on the \UIPT (the local limit of uniform triangulations as their size goes to infinity) travels a distance $t^{1/4+o(1)}$ after time $t$. We thus expect that the mixing time should be of order at least $n^{1+o(1)}$. Indeed, roughly speaking, since a map with $n$ faces has diameter $n^{1/4+o(1)}$, we need to wait for a time $n^{1+o(1)}$ before the random walk has a chance to explore the whole map.

If $Q$ is a quadrangulation, the dual $Q^\dagger$ of $Q$ is the planar graph whose vertices are the faces of $Q$, where two faces of $Q$ are adjacent if they share an edge in $Q$. 
We prove a similar upper bound on the uniform mixing time of the lazy random walk on $Q^\dagger$, which is a reversible Markov chain on $F(Q)$ with the following transition probability: at each time step, the walk has probability $1/2$ of staying at the same face, and probability $1/2$ of crossing one of the four sides of the current face, chosen uniformly at random. Note that $Q$ is of type I, so both sides of a given edge may be incident to the same face; crossing such an edge results in staying at the current face. We denote the transition kernel of the lazy random walk by $p_{Q^\dagger}$. 
Its stationary distribution $\pi_{Q^\dagger}$ is the uniform probability measure on $F(Q)$. The $\ve$-uniform mixing time of the lazy random walk is defined as before:
\begin{equation*}
\tau_{Q^\dagger}(\ve) \eqdef \inf \l\{  k : \forall x,y\in F(Q), \  \l|\frac{p_{Q^\dagger}^k(x,y)-\pi_{Q^\dagger}(y)}{\pi_{Q^\dagger}(y)} \r| \leq \ve \r\} .
\end{equation*}

Our second theorem provides an upper bound on the mixing time of the lazy random walk in the dual of $Q_n$.

\begin{theorem}
\label{Th_mixing_time_dual}
For every $\ve,\vd>0$, with probability going to $1$ as $n \to \infty$,
\begin{equation*}
\tau_{Q^\dagger_n}(\ve) \leq n^{3/2+\vd} .
\end{equation*}
\end{theorem}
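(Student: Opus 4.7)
The plan is to mirror the proof of Theorem \ref{Th_mixing_time}: lower-bound the Cheeger constant of $Q_n^\dagger$ and apply the Jerrum--Sinclair bound \cite{jerrum1989approximating} on the uniform mixing time. The crucial input is an isoperimetric inequality for face-subsets of $Q_n$, which is the dual counterpart of Theorem \ref{Th_isoperimetric_inequality}.

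First, I interpret the dual edge boundary in primal terms. For any $F \subset F(Q_n)$, the edge boundary of $F$ in $Q_n^\dagger$ is the set of edges of $Q_n$ whose two incident faces lie one in $F$ and one in $F^c$. Drawn on the sphere, these edges bound the closed region $\bigcup_{f \in F} f$, hence form a disjoint union of simple cycles in $Q_n$. Thus lower-bounding the dual Cheeger constant reduces to showing that every separating cycle, or disjoint union of cycles, in $Q_n$ is not too short compared to the face-volume it encloses.

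Second, I prove the corresponding cycle isoperimetric inequality: for every $\vd>0$, with probability going to $1$, every simple cycle $C$ of $Q_n$ enclosing $v \leq n/2$ faces satisfies $|C| \geq c\, v^{1/4-\vd}$, and the bound extends by additivity to disjoint cycle unions. This is the natural dual version of Theorem \ref{Th_isoperimetric_inequality}, and should follow from the same ingredients: the explicit Laplace transform of the number of faces in truncated hulls of the \UIPQ controls the probability of ``many faces enclosed by a short boundary'' events; a sandwich argument compares an arbitrary separating cycle to the hull it encloses (both inscribed and circumscribed); and a union bound over dyadic scales $v \in \{2^k\}$ and over admissible cycle positions yields the required uniformity.

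Plugging in the worst case, a balanced cut with $|F| \sim n/2$, gives $\Phi(Q_n^\dagger) \geq c\, n^{-3/4-\vd}$ with high probability, and the uniform-mixing bound $\tau(\ve) \leq C\, \Phi^{-2} \log(1/(\ve\,\pi_{\min}))$ with $\pi_{\min} = 1/n$ then yields $\tau_{Q_n^\dagger}(\ve) \leq n^{3/2 + O(\vd)}$, which suffices after shrinking $\vd$. The main technical hurdle is exactly the uniformity in the cycle isoperimetric inequality: a separating cycle may meander through $Q_n$ in a complicated way and need not coincide with the boundary of a hull, so bounding its length requires controlling cycles of arbitrary shape at every scale simultaneously. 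This is handled, as in the primal case, by exploiting the sharp exponential tails provided by the Laplace transform identity to absorb the union-bound cost over all possible positions and scales.
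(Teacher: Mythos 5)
Your proposal is correct and follows essentially the same route as the paper: lower-bound the Cheeger constant of $Q_n^\dagger$ via the face-subset isoperimetric inequality (worst case $|S|\sim n/2$ giving $\kH^\dagger\geq n^{-3/4-\vd}$), pass to the conductance using $w_{x,y}\geq 1/(8n)$, and apply \cite[Corollary 2.3]{jerrum1989approximating}. Note only that the ``dual counterpart'' you set out to prove in your second step is literally Theorem \ref{Th_isoperimetric_inequality} itself (its $S\subset F(Q_n)$ and $\partial S$ are already the dual vertex set and dual edge boundary), so no additional cycle isoperimetric inequality needs to be established for this deduction.
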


\paragraph{Bottlenecks in finite quadrangulations.}

We now state our lower bounds on the size of bottlenecks in $Q_n$. The first bound considers sets of faces of $Q_n$. 
 For every $S \subset F(Q_n)$, we denote the set of all edges of $Q_n$ incident on one side to a face of $S$ and on the other side to a face outside of $S$ by $\partial S$. 
\begin{theorem}
\label{Th_isoperimetric_inequality}
For every $\nu \in (0,1)$:
\begin{equation*}
\P\l( \inf_{S\subset F(Q_n)\ :\ 0<|S| \leq n/2} \frac{|\partial S|^{4/3}}{|S|} \geq n^{-2/3-\nu} \r) \ulim n \infty 1.
\end{equation*}
\end{theorem}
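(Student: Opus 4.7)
The plan is a first-moment (union bound) argument over candidate bottleneck sets $S$, organized by dyadic scales of $V=|S|$ and $L=|\partial S|$. The expected count of such sets in $Q_n$ will be controlled via enumeration of quadrangulations with a boundary, together with an estimate on hull volumes in the UIPQ.

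The starting point is a topological reduction to simply connected sets. By planar duality, $\partial S$ is a disjoint union of cycles in $Q_n$ separating $S$ from $F(Q_n)\setminus S$. Replacing $S$ with the union of $S$ and each bounded component of $F(Q_n)\setminus S$ not containing the root face (and, symmetrically, swapping $S \leftrightarrow S^c$ if needed to preserve $|S|\leq n/2$), one reduces to the case where $S$ is simply connected in $Q_n^\dagger$ and $\partial S$ is a single simple cycle of length $L$; this replacement only weakens the assumed inequality $|\partial S|^{4/3}/|S| < n^{-2/3-\nu}$. Cutting $Q_n$ along $\partial S$ then decomposes the map into two rooted quadrangulations of the $L$-gon, with $V$ and $n-V$ inner faces respectively, plus a gluing datum. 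Denoting by $Z_{m,\ell}$ the number of rooted quadrangulations of the $\ell$-gon with $m$ inner faces, and by $Z_n$ the total number of rooted quadrangulations with $n$ faces, this yields
\begin{equation*}
\E\l[\#\{S: |S|=V,\, |\partial S|=L,\, S \text{ simply connected}\}\r] \leq C\, n L \cdot \frac{Z_{V,L}\, Z_{n-V,L}}{Z_n},
\end{equation*}
where the factor $nL$ accounts for the choice of root of $Q_n$ and the offset in the boundary identification.

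The heart of the proof is then a sharp estimate on this ratio. Combining the classical asymptotics $Z_n \sim c\cdot 12^n n^{-5/2}$ with the explicit Laplace transform of the volume of truncated hulls of the UIPQ announced in the abstract — which encodes that a cycle of length $\ell$ typically encloses a volume of order $\ell^2$ with controlled tails — one aims at a bound of the form
\begin{equation*}
\frac{Z_{V,L}\, Z_{n-V,L}}{Z_n} \leq \operatorname{poly}(V,L,n) \cdot \exp\l(-c\, L^2\l(\tfrac{1}{\sqrt{V}}+\tfrac{1}{\sqrt{n-V}}\r)\r).
\end{equation*}
This exponential factor becomes dominant precisely when the bottleneck $L$ is unusually thin compared with $\sqrt{V}$, which is the regime excluded by the theorem. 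Summing the first-moment bound dyadically over $(V,L)$ with $L^{4/3}< V n^{-2/3-\nu}$, the $n^\nu$-slack in the hypothesis absorbs polynomial prefactors and logarithmic dyadic factors, and the total expected number of violating sets tends to $0$.

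I expect the main obstacles to be twofold. First, the topological reduction to simply connected bottlenecks must be made rigorous for sets whose dual-graph boundary consists of many cycles, possibly by extracting a single dominant cycle and paying a bounded combinatorial cost. Second, and more substantially, the Laplace-transform estimate must hold uniformly in $V$ and $L$, preserving the critical exponential factor $\exp(-cL^2/\sqrt V)$ across all regimes — small $V$, moderate $V$, and $V\sim n/2$. This uniform control is precisely what the explicit computation announced in the abstract is meant to provide, and it is the technical crux of the argument.
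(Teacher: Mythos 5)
Your overall architecture — reduce to (essentially) simply connected $S$, then control all scales of $(V,L)$ — matches the paper's in spirit, but the engine you propose, a first-moment bound over candidate bottleneck cycles, fails for this problem, and the key enumeration estimate you posit is not true. The exact asymptotics $\card{\Q_{m,p}} \sim C_p m^{-5/2}12^m$ with $C_p = 2^p3^{-p}\frac{(3p)!}{p!(2p-1)!}\frac{1}{2\sqrt\pi}\approx (9/2)^p\sqrt{p}$ show that your ratio satisfies $\frac{Z_{V,L}Z_{n-V,L}}{Z_n}\asymp C_L^2\, n^{5/2}V^{-5/2}(n-V)^{-5/2}$ whenever $V$ and $n-V$ are large compared to $L^2$; the factor $C_L^2\approx (81/4)^{L}$ grows exponentially in the boundary length and is absent from your claimed bound. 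Moreover, in the regime the theorem must exclude one has $V> n^{2/3+\nu}L^{4/3}\gg L^2$, i.e.\ the enclosed volume is in the \emph{upper} tail of the Boltzmann law of the $L$-gon, where the decay is only polynomial ($\sim (V/L^2)^{-3/2}$); the exponential factor $\exp(-cL^2/\sqrt V)$ you invoke belongs to the lower tail $V\ll L^2$ (and even there has the form $\exp(-cL^2/V)$). Plugging the correct estimates into your union bound gives an expected number of violating cycles of order $c^L$ with $c>1$ for every $L\gtrsim \log n$: the expectation is exponentially large even though the event is rare, because the candidate cycles are exponentially numerous and massively correlated (any bad cycle can be perturbed in exponentially many ways). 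No choice of dyadic bookkeeping or $n^{\nu}$ slack can rescue this.

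The paper circumvents precisely this obstruction by never union-bounding over cycles. Instead it covers $Q_n$, at each dyadic radius $R$, by only $n^{1+\vd}/R^4$ metric balls whose $R$-hulls have volume at most $R^{4/3}n^{2/3+\vd}$ with high probability (the hull-volume tail $\P(|\Hull(r)|>tr^4)\leq C t^{-3/2+\vd}$ coming from the Laplace transform of truncated hulls in the \UIPQ, transferred to $Q_n$ by an absolute-continuity lemma and a re-rooting invariance). Then any $S$ with connected boundary of size $L$ has $\partial S$ trapped in one such ball of radius $R\asymp L$, hence $S$ inside the corresponding hull, hence $|S|\leq L^{4/3}n^{2/3+\vd}$ deterministically on that event. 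The union bound is thus over polynomially many ball centers rather than exponentially many cycles; this is the idea your proposal is missing. (Your topological reduction step also needs the care the paper takes — filling in complementary components can push $|S|$ above $n/2$ — but that is a secondary issue.)
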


This theorem is instrumental in the proof of Theorems \ref{Th_mixing_time}, \ref{Th_mixing_time_dual} and \ref{Th_isoperimetric_inequality_V}. Section 3 to 6 are devoted to its proof.

An interesting feature of \thref{Th_isoperimetric_inequality} is that the bound holds for all scales simultaneously: $S$ can have any size, and is not restricted to contain a macroscopic fraction of faces of $Q_n$. We conjecture that the bound of the Theorem is the best possible, in the sense that for every $0 < k < n/4$, we can find an $S$ with $k \leq |S| \leq 2k$ and $\frac{|\partial S|^{4/3}}{|S|} \approx n^{-2/3}$.

In order to establish our results, we will heavily study the local limit of quadrangulations, the uniform infinite plane quadrangulation or \UIPQ \cite{Krikun2008local}.

Let us mention earlier results in this direction. 
\cite{gall2017separating} established a lower bound on the size of bottlenecks in the uniform infinite plane quadrangulation or \UIPQ, in the form of an isoperimetric inequality. More precisely, \cite[Theorem 3]{gall2017separating} ensures that any connected union of $n$ faces of the \UIPQ, such that at least one of the faces is incident to the root vertex, has a boundary that must contain at least $n^{1/4}(\log n)^{-(3/4)-\vd}$ edges. However, this result is not sufficient for our purpose: firstly because it applies to the infinite-volume limit of uniform quadrangulations, secondly because it only controls the size of bottlenecks that separate the root vertex from infinity. Our results are established independently from those in \cite{gall2017separating}.

The convergence of uniform random quadrangulations towards the Brownian map \cite{leGall2013uniqueness,miermont2013brownian} gives a rough lower bound on the size of macroscopic bottlenecks in finite quadrangulations. 
Let us be more precise. Fix $\vd>0$. Since the Brownian map is homeomorphic to the sphere \cite{paulinLG2008sphere}, we can find $\ve>0$ such that with probability close to $1$, for $n$ large enough, any cycle in $Q_n$ that separates $Q_n$ in two subsets, each with at least $\vd n$ faces, must have length at least $\ve n^{1/4}$. This is the best one can expect: with high probability it is possible to find sets of size roughly $n/2$ and perimeter no larger than some large constant times $n^{1/4}$. 
However, this result only gives a lower bound on the size of bottlenecks at large scales (where the infimum holds over subsets $S$ of $F(Q_n)$ with $\vd n \leq |S| \leq n/2$).

Let us give the intuition why this bound is optimal, focusing on large and small scales only. 
Our previous remark ensures that $\inf_{S\subset F(Q_n)\ :\ \vd n <|S| \leq n/2} \frac{|\partial S|^{4/3}}{|S|} \approx n^{-2/3+o(1)}$, so the bound of \thref{Th_isoperimetric_inequality} is indeed optimal for sets containing a macroscopic proportion of faces of $Q_n$.  
For small scales, \cite[Proposition 5]{banderier2001random} states that the supremum over all cycles of length $2$ of the number of faces contained in the smallest component of the complement of the cycle is at most $n^{2/3+o(1)}$. We expect the supremum to be indeed $n^{2/3+o(1)}$; if this is true, then the bound of \thref{Th_isoperimetric_inequality} is also optimal for sets containing at most $n^{2/3+o(1)}$ faces.

We now state our second bound on the size of bottlenecks $Q_n$, which holds for sets of vertices of $Q_n$. For every $A,B \subset V(Q_n)$, we denote the set of all edges of $Q_n$ with one endpoint in $A$ and the other in $B$ by $E(A,B)$, and $A^c \eqdef V(Q_n) \setminus A$. 
\begin{theorem}
\label{Th_isoperimetric_inequality_V}
For every $\nu \in (0,1)$:
\begin{equation*}
\P\l( \inf_{A\subset V(Q_n)\ :\ \pi_{Q_n}(A) \leq 1/2} \frac{|E(A,A^c)|^{4/3}}{4n\pi_{Q_n}(A)} \geq n^{-2/3-\nu} \r) \ulim n \infty 1.
\end{equation*}
\end{theorem}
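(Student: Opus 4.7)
The plan is to reduce the vertex-boundary question to the face-boundary question of Theorem~\ref{Th_isoperimetric_inequality} by associating, to any vertex set $A \subset V(Q_n)$ with $\pi_{Q_n}(A) \leq 1/2$, a natural family of threshold face sets. For each face $f \in F(Q_n)$ set $a(f) \eqdef \card{V(f) \cap A}$, the number of corners of $f$ lying in $A$, and let $S_t \eqdef \{f \in F(Q_n) : a(f) \geq t\}$ for $t \in \{1,2,3,4\}$. Summing $a(f)$ as a telescoping sum of indicators in $t$ yields the identity
\[
\sum_{t=1}^{4} \card{S_t} \;=\; \sum_{f\in F(Q_n)} a(f) \;=\; \sum_{x \in A} \deg_{Q_n}(x) \;=\; 4n\pi_{Q_n}(A),
\]
which is at most $2n$ under our hypothesis.

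The first technical ingredient would be a local boundary comparison $\card{\partial S_t} \leq C \, \card{E(A, A^c)}$ for some absolute constant $C$ and every $t$. Given $e = \{x,y\} \in \partial S_t$ with adjacent faces $f_1, f_2$ such that $a(f_1) \geq t > a(f_2)$, the fact that $f_1$ and $f_2$ both have $x$ and $y$ among their corners forces at least one of them to contain both a corner in $A$ and a corner in $A^c$, and therefore to contain an edge $e' \in E(A, A^c)$ among its four sides. Since each edge of $Q_n$ is incident to at most two faces, each of which has four sides, the assignment $e \mapsto e'$ is uniformly $O(1)$-to-one, giving the comparison.

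Next I would case-split on the sizes of the $S_t$. Let $t^{**}$ be the smallest $t$ such that $\card{S_{t^{**}}} \leq n/2$ (it exists because the sum is at most $2n$). If $t^{**} = 1$, then $\card{S_1}$ is the largest term, so $\card{S_1} \geq n\pi_{Q_n}(A)$ while also $\card{S_1} \leq n/2$; applying Theorem~\ref{Th_isoperimetric_inequality} to $S_1$ and combining with the boundary comparison yields $\card{E(A, A^c)}^{4/3} \geq c \cdot n\pi_{Q_n}(A) \cdot n^{-2/3-\nu}$, which is the bound sought modulo a constant absorbable into an infinitesimal loss in $\nu$. If instead $t^{**} \geq 2$, set $k \eqdef \card{S_{t^{**}-1}} - \card{S_{t^{**}}} = \card{\{f : a(f) = t^{**}-1\}}$, which satisfies $k > n/2 - \card{S_{t^{**}}}$. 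When $\card{S_{t^{**}}} \geq n/4$, I would apply Theorem~\ref{Th_isoperimetric_inequality} to $S_{t^{**}}$ (whose size then lies in $[n/4, n/2]$) and conclude as above, with the factor $n\pi_{Q_n}(A) \leq 2n$ absorbable into a constant. When $\card{S_{t^{**}}} < n/4$, we obtain $k > n/4$; a direct inspection of the possible $A$-patterns of corners on a single quadrangle shows that every face with $a(f) \in \{1,2,3\}$ contains at least two sides in $E(A, A^c)$, so by double counting $\card{E(A, A^c)} \geq k > n/4$, which makes the inequality trivial.

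The principal obstacle I expect is ensuring the case analysis covers every configuration of $A$ cleanly. The naive face set $S_1$ can fail to have usable size in the macroscopic regime; for instance, if $A$ is one of the bipartite classes of $Q_n$, then $a(f) = 2$ for every face and $\card{S_1} = n$, so one cannot simply feed $S_1$ into Theorem~\ref{Th_isoperimetric_inequality}. The threshold hierarchy $S_1 \supset S_2 \supset S_3 \supset S_4$ is introduced precisely to sidestep this: whenever no single $S_t$ of usable size exists, the resulting ``jump'' $\card{S_{t^{**}-1}} - \card{S_{t^{**}}}$ is forced to be macroscopic, and the mixed-corner faces it comprises produce, by the quadrangle analysis, enough edges of $E(A, A^c)$ to make the target inequality trivial.
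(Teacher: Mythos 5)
Your proof is correct, and while it shares the paper's overall skeleton --- pass from the vertex set $A$ to face sets via incidence, compare their face-boundary with $E(A,A^c)$ by a local $O(1)$-to-one assignment of boundary edges, and invoke Theorem \ref{Th_isoperimetric_inequality} --- the mechanism by which you relate $\pi_{Q_n}(A)$ to face counts and dispose of the macroscopic case is genuinely different. The paper works only with your $S_1$ (there called $S(A)$, the set of faces incident to a vertex of $A$); to rule out $|S(A)|>n/2$ it runs a contradiction argument through $S(A)^c$ and $S(A^c)$, and to convert $|S(A)|$ back into $\pi_{Q_n}(A)$ it uses the crude bounds $|A|\le 4|S(A)|$ and $\pi_{Q_n}(A)\le \frac{|A|}{4n}\max_x\deg_{Q_n}(x)$, hence the max-degree estimate of Lemma \ref{Lemma_max_deg_Qn} (which rests on \cite{gao2000distribution}). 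Your threshold hierarchy $S_1\supset\cdots\supset S_4$ replaces all of this: the identity $\sum_{t=1}^4|S_t|=4n\pi_{Q_n}(A)$ is exact, so no degree bound is needed, and the configuration in which no $S_t$ of usable size exists is killed by the observation that a macroscopic jump $|S_{t^{**}-1}|-|S_{t^{**}}|$ consists of mixed faces, each contributing at least two sides to $E(A,A^c)$, whence $|E(A,A^c)|>n/4$ and the inequality is trivial. What you lose is a slightly worse absolute constant in the boundary comparison (your assignment is at worst $8$-to-one versus the paper's factor $2$ for $S(A)$), harmlessly absorbed into the exponent; what you gain is independence from Lemma \ref{Lemma_max_deg_Qn} and from the $\ln n$ factor it introduces. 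One notational point: for the degree-sum identity to be exact, $a(f)$ must count the corners of $f$ lying in $A$ with multiplicity (in a type I quadrangulation a vertex may occupy several corners of the same face), rather than $|V(f)\cap A|$; every subsequent step of your argument is compatible with this reading, including the parity argument showing that a mixed face carries at least two sides in $E(A,A^c)$ even when its boundary walk traverses an edge twice.
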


The similarity between Theorem \ref{Th_isoperimetric_inequality} and Theorem \ref{Th_isoperimetric_inequality_V} can be highlighted by noticing that $E(A,A^c)$ is the set of all edges with one end in $A$ and another in $A^c$, while $\partial S$ is the set of all edges with one side in $S$ and another in $S^c$. The denominator in Theorem \ref{Th_isoperimetric_inequality} may also be rewritten in a way closer to Theorem \ref{Th_isoperimetric_inequality_V}: $|S| = n \pi_{Q_n^\dagger}(S)$.

\paragraph{Hull volume.}

For every integer $r>0$ and every vertex $v$ of the \UIPQ, the $r$-ball of the \UIPQ $Q_\infty$ centered at $v$ is the union of faces of the \UIPQ that are incident to a vertex at distance at most $r-1$ from $v$. The standard $r$-hull centered at $v$, denoted by $\Hull_{Q_\infty}(v,r)$ is the union of the ball and of the finite connected components of its complement. 

In order to prove Theorem \ref{Th_isoperimetric_inequality}, we compute the Laplace transform of the volume of truncated hulls centered at the root vertex of the \UIPQ, and derive an upper bound on the probability that their volume is large. Truncated hulls is a type of hulls that is particularly adapted to the decomposition of the \UIPQ into layers, see \cite{gall2017separating,lehe2019fpp} and Section \ref{Sec_Laplace_transform_hull_volume} for a precise definition. They are also closely related to the standard hulls in the following way: if $\Htr_{Q_\infty}(v,r)$ is the truncated $r$-hull of the \UIPQ centered at $v$, then the following inclusions are verified for every integer $r>0$:
\begin{equation}
\label{Eq_inclusion_hulls}
\Htr_{Q_\infty}(v,r) \subset \Hull_{Q_\infty}(v,r) \subset \Htr_{Q_\infty}(v,r+1) .
\end{equation}
It follows that bounds on the volume can be transfered from truncated hulls to standard hulls and vice versa, up to negligible terms. 
Our methods are reminiscent of the paper \cite{menard2018volumes} dealing with triangulations, although the formulas are more involved. To keep the technicalities to a minimum, we only establish an analog of Theorem 1 in \cite{menard2018volumes}.

 Let us state an interesting consequence of our computations, which is a key tool in Section 5. 
\begin{lemma}
For every $\vd>0$, there exists $C(\vd) \in (0,\infty)$ such that for every $r>0$, for every $t>0$,
\begin{equation*}
\P\l( |\Hull_{Q_\infty}(r)| > t r^4 \r) \leq C(\vd) t^{-3/2+\vd} .
\end{equation*}
\end{lemma}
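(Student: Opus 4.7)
The plan is to reduce to truncated hulls and then bound their fractional moments using the explicit Laplace transform.

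By the inclusion \eqref{Eq_inclusion_hulls}, $\Hull_{Q_\infty}(r) \subset \Htr_{Q_\infty}(r+1)$, and since $(r+1)^4 \leq 16 r^4$ for $r \geq 1$ (the case $r=0$ being trivial after adjusting $C(\vd)$), it suffices to establish the same bound with $|\Hull_{Q_\infty}(r)|$ replaced by $|\Htr_{Q_\infty}(r)|$. Set $L_r(\lambda) \eqdef \E[\exp(-\lambda |\Htr_{Q_\infty}(r)|)]$, $a_r \eqdef -L_r'(0) = \E[|\Htr_{Q_\infty}(r)|]$, and $R_r(\lambda) \eqdef \lambda a_r - (1 - L_r(\lambda)) = \E[g(\lambda |\Htr_{Q_\infty}(r)|)]$, where $g(x) \eqdef e^{-x}-1+x \geq 0$. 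From the explicit formula for $L_r$ derived in \secref{Sec_Laplace_transform_hull_volume}, I would read off two quantitative bounds, uniform in $r$:
\begin{romenum}
\item $a_r \leq C r^4$;
\item $R_r(\lambda) \leq C (\lambda r^4)^{3/2}$ for $\lambda r^4 \leq 1$.
\end{romenum}
Heuristically, (ii) encodes the $3/2$-stable nature of the tail of $|\Htr_{Q_\infty}(r)|/r^4$ in the scaling limit, and is the leading non-analytic singularity of $L_r$ at $\lambda = 0$.

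Given these bounds, for any $\beta \in (1,2)$ Fubini applied to $R_r(\lambda) = \E[g(\lambda|\Htr_{Q_\infty}(r)|)]$, together with the identity $\int_0^\infty g(x) x^{-\beta-1}\,dx = -\Gamma(1-\beta)/\beta > 0$, gives
\begin{equation*}
\E[|\Htr_{Q_\infty}(r)|^\beta] = \frac{-\beta}{\Gamma(1-\beta)} \int_0^\infty R_r(\lambda)\, \lambda^{-\beta-1}\, d\lambda .
\end{equation*}
Splitting the integral at $\lambda = 1/r^4$ and using (ii) on $(0, 1/r^4]$ together with the crude bound $R_r(\lambda) \leq \lambda a_r \leq C \lambda r^4$ on $[1/r^4, \infty)$, both pieces are of order $r^{4\beta}$ whenever $\beta \in (1, 3/2)$. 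Hence $\E[|\Htr_{Q_\infty}(r)|^\beta] \leq C(\beta)\, r^{4\beta}$, and Markov's inequality yields
\begin{equation*}
\P(|\Htr_{Q_\infty}(r)| > t r^4) \leq \frac{\E[|\Htr_{Q_\infty}(r)|^\beta]}{(tr^4)^\beta} \leq C(\beta)\, t^{-\beta}.
\end{equation*}
Taking $\beta = 3/2 - \vd$ (and observing the statement is trivial for $\vd \geq 1/2$ or $t \leq 1$) gives the desired bound.

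The main obstacle is step (ii): extracting the sharp $(\lambda r^4)^{3/2}$ control on $R_r(\lambda)$ uniformly in $r$ from the closed-form expression for $L_r$. This amounts to identifying the dominant non-analytic behaviour of $L_r$ at $\lambda = 0$, a singularity analysis analogous to, but more intricate than, the triangulation case treated in \cite{menard2018volumes}, owing to the more involved special functions arising for quadrangulations.
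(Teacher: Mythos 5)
Your proposal is correct and follows essentially the same route as the paper: the paper's Corollary \ref{Cor_moments_of_hull_volume} is proved by exactly this Mellin/Fubini identity applied to $e^{-\lambda X_r}-1+\lambda X_r$ with $X_r=|\Htr_{Q_\infty}(r)|/r^4$, splitting the $\lambda$-integral at a fixed $\lambda_0$, and the key input you flag as the "main obstacle" — the uniform-in-$r$ bound $R_r(\lambda)\leq C(\lambda r^4)^{3/2}+c(\lambda r^4)^2$ — is precisely what Lemma \ref{Lemma_dl_Laplace} supplies via the Taylor expansion of the closed-form Laplace transform. The only cosmetic difference is that the paper bounds $\E[X_r^{3/2-\vd}]$ directly rather than $\E[|\Htr_{Q_\infty}(r)|^\beta]\leq C r^{4\beta}$.
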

This bound is not surprising; the volume of the unit hull of the Brownian Plane has a known distribution that exhibits such a tail, without the $\vd$ in the exponent. In fact, we also obtain an asymptotic of the tail of $|\Htr_{Q_\infty}(r)|$ as $t \to \infty$:
\begin{equation*}
\P\l( |\Htr_{Q_\infty}(r)| > t \r) \usim t \infty \frac{r(r+3)(r+1)^3(r+2)^3}{4 \sqrt \pi(2r+3)^2} t^{-3/2} .
\end{equation*}
However, the tails are not useful directly: for our purposes we need a tail estimate that is valid for every $t>0$ (and not just as $t\to\infty$).

\paragraph{Plan of the paper.}

We explain in Section \ref{Sec_Proof of the mixing time theorem} how to derive the bound on the mixing time in Theorems \ref{Th_mixing_time},  \ref{Th_mixing_time_dual} and \ref{Th_isoperimetric_inequality_V}, from the bound on the size of bottlenecks in Theorem \ref{Th_isoperimetric_inequality}. The rest of the article is devoted to the proof of Theorem \ref{Th_isoperimetric_inequality}. 
At the core of the proof lies a fine control of the volume of standard hulls, more precisely of the probability that their volume is large. Such a control is easier to establish in the \UIPQ. Section \ref{Sec_Standard hulls in finite quadrangulations and density with the UIPQ} derives Lemma \ref{Lemma_density_hulls_UIPQ_Qn}, which allows us to transfer results from the \UIPQ to finite quadrangulations. 
We compute in Section \ref{Sec_Laplace_transform_hull_volume} the Laplace transform of the volume of truncated hulls in the \UIPQ, and derive the required tail estimates from its Taylor expansion near $0$. This section makes heavy use of the so-called skeleton decomposition of the \UIPQ.
 Section \ref{Section_Covering} uses the two previous sections to establish Proposition \ref{Prop_covering_hulls_with_controlled_volume}, stating that we can cover a uniform finite quadrangulation with a “small” number of hulls whose volumes are “controlled”. Finally, we prove Theorem \ref{Th_isoperimetric_inequality} in Section \ref{Sec_Isoperimetric inequality}.

\section{Proof of the mixing time theorem}
\label{Sec_Proof of the mixing time theorem}

We derive \thref{Th_mixing_time_dual} from \thref{Th_isoperimetric_inequality}. 
The first step is a bound on the Cheeger constant of $Q_n^\dagger$, which is a straightforward (and much weaker) consequence of \thref{Th_isoperimetric_inequality}.
\begin{corollary}
\label{Cor_cheeger}
For every $\vd>0$, the Cheeger constant $$\kH^\dagger \eqdef \inf \l\{ \frac{|\partial S|}{|S|} \ : \ S\subset F(Q_n), \ 0<|S|\leq n/2 \r\} $$ of $Q_n^\dagger$ is larger than $n^{-3/4-\vd}$ with probability going to $1$ as $n \to \infty$.
\end{corollary}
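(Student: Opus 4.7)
The plan is to derive this corollary as a direct algebraic consequence of \thref{Th_isoperimetric_inequality}, with no additional probabilistic input. The idea is simply that the bound $|\partial S|^{4/3}/|S| \geq n^{-2/3-\nu}$ controls the Cheeger ratio $|\partial S|/|S|$ once one uses the worst case $|S| \leq n/2$ to eliminate the remaining factor of $|S|^{-1/4}$.

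Concretely, I would fix $\vd > 0$ and choose $\nu \in (0,1)$ small enough that $3\nu/4 < \vd$ (for instance $\nu = \vd$ when $\vd<1$, or any fixed $\nu < 4\vd/3$ otherwise). On the event provided by \thref{Th_isoperimetric_inequality} with this choice of $\nu$, which has probability tending to $1$, every $S \subset F(Q_n)$ with $0 < |S| \leq n/2$ satisfies
\begin{equation*}
|\partial S| \;\geq\; n^{-1/2 - 3\nu/4}\, |S|^{3/4},
\end{equation*}
after raising the inequality of the theorem to the power $3/4$. Dividing by $|S|$ and using $|S|^{-1/4} \geq (n/2)^{-1/4}$ then yields
\begin{equation*}
\frac{|\partial S|}{|S|} \;\geq\; 2^{1/4}\, n^{-3/4 - 3\nu/4}.
\end{equation*}
Since $3\nu/4 < \vd$, the right-hand side exceeds $n^{-3/4-\vd}$ for all $n$ large enough (uniformly in $S$), so on the same event $\kH^\dagger \geq n^{-3/4-\vd}$ eventually, and the claimed probability bound follows.

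There is essentially no obstacle here: \thref{Th_isoperimetric_inequality} is strictly stronger than what Corollary \ref{Cor_cheeger} requires, and the derivation is purely deterministic once one conditions on the good event. The only mild subtlety is to allow an arbitrarily small loss in the exponent to absorb the $2^{1/4}$ prefactor into $n^{-\vd}$, which is why the freedom to pick $\nu$ strictly smaller than $4\vd/3$ is used.
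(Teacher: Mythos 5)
Your proposal is correct and is essentially identical to the paper's own proof: both raise the inequality of \thref{Th_isoperimetric_inequality} to the power $3/4$ and use $|S| \leq n/2$ to bound the leftover factor $|S|^{-1/4}$ by $(2/n)^{1/4}$, absorbing the constant and the small loss in the exponent by choosing $\nu$ appropriately. Nothing further is needed.
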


\begin{proof}
If $0<|S|\leq n/2$, 
\begin{equation*}
\frac{|\partial S|}{|S|} \geq \l( \frac{|\partial S|^{4/3}}{|S|} \r)^{3/4} \pfrac{2}{n}^{1/4}
\end{equation*}
and we just need to apply \thref{Th_isoperimetric_inequality}.
\end{proof}

\begin{proof}[Proof of \thref{Th_mixing_time_dual}]

For every $x,y \in F(Q_n)$, let $w_{x,y} \eqdef p_{Q_n^\dagger}(x,y) \pi_{Q_n^\dagger}(x)$. The conductance of $Q_n^\dagger$ is
\begin{equation}
\Phi^\dagger \eqdef \inf\l\{ \frac{\sum_{x \in S, y\notin S} w_{x,y}}{\pi_{Q_n^\dagger}(S)} \ : \ S \subset F(Q_n), \ \pi_{Q_n^\dagger}(S) \leq 1/2 \r\} .
\end{equation}
For every adjacent and distinct $x,y \in F(Q_n)$, $p_{Q_n^\dagger}(x,y)\geq 1/8$ and $w_{x,y} \geq 1/8n$, thus 
$$\sum_{x \in S, y\notin S} w_{x,y} \geq \frac{|\partial S|}{8n} ,$$
and $\Phi^\dagger \geq \kH^\dagger/8 $. We then apply \cite[Corollary 2.3]{jerrum1989approximating}: 
\begin{equation}
\tau_{Q_n^\dagger}(\ve) \leq \frac{128}{{\kH^\dagger}^2}(\ln(8n) + \ln(1/\ve) ) .
\end{equation}
The theorem then follows from \corref{Cor_cheeger}.

\end{proof}

We now derive \thref{Th_isoperimetric_inequality_V} and \thref{Th_mixing_time} from \thref{Th_isoperimetric_inequality}. Let us first prove an upper bound on the maximum degree of a vertex of $Q_n$.

\begin{lemma}
\label{Lemma_max_deg_Qn}
Let $\vD_n \eqdef \max \{ \deg_{Q_n}(x) \ : \ x \in V(Q_n) \}$. Then
\begin{equation}
\P(\vD_n > \ln n) \ulim n \infty 0 .
\end{equation}
\end{lemma}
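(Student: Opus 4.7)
The plan is to reduce the problem to a tail estimate on the degree of a single uniformly chosen vertex via a first-moment bound, and then exploit the exponential tail of the degree distribution.

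First I would apply the standard random-rerooting trick: let $v_0$ be a vertex sampled uniformly from $V(Q_n)$ independently of $Q_n$. Then the pair $(Q_n, v_0)$ is distributed as a uniform rooted pointed quadrangulation with $n$ faces. Since Euler's formula gives $|V(Q_n)| = n+2$, Markov's inequality yields
\begin{equation*}
\P(\vD_n \geq k) \leq \E\l[\#\{v \in V(Q_n) : \deg_{Q_n}(v) \geq k\}\r] = (n+2) \, \P(\deg_{Q_n}(v_0) \geq k) .
\end{equation*}

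The heart of the argument is then to establish an exponential tail bound $\P(\deg_{Q_n}(v_0) \geq k) \leq C \rho^k$ with constants $C > 0$ and $\rho \in (0, 1/e)$ independent of $n$ and $k$. A natural route is via the \CVS, under which a uniform pointed quadrangulation of size $n$ corresponds to a uniform well-labeled plane tree with $n$ edges, and the degree of the pointed vertex $v_0$ is expressed as the number of corners of the tree whose label attains the global minimum. Standard random walk estimates applied to the label increments (with steps in $\{-1,0,+1\}$ along the contour sequence) then yield the exponential tail. Alternatively, one can transfer the root-degree tail from the \UIPQ to $Q_n$ via the absolute continuity of hulls of bounded radius between the two models, using the explicit distribution of the root degree in $Q_\infty$, which is known to decay exponentially with rate strictly below $1/e$.

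Combining the two steps with $k = \lceil \ln n \rceil$ gives
\begin{equation*}
\P(\vD_n > \ln n) \leq (n+2) \, C \, \rho^{\ln n} = O\l( n^{1 + \ln \rho} \r) = o(1) ,
\end{equation*}
since $\ln \rho < -1$.

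The main obstacle is the sharpness of the decay rate $\rho < 1/e$ in the single-vertex tail. A naive counting argument against the $\approx 12^n$ Tutte-type growth of rooted type I quadrangulations typically yields only $\rho$ close to $1$, which would suffice for a polylogarithmic bound $\vD_n \leq C \log n$ but not the cleaner $\ln n$ bound stated. Reaching $\rho < 1/e$ requires either a careful random walk analysis of the CVS label process (in particular a Cramér-type deviation estimate for the number of minimum-label corners) or explicit access to the generating function for the root degree distribution in the \UIPQ.
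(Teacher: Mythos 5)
Your first-moment reduction is sound as far as it goes, but the proof is incomplete, and the step you flag as the ``main obstacle'' is not a technicality that careful analysis will remove: it is the whole difficulty, and it cannot be overcome. Everything in your argument rests on a single-vertex tail bound $\P(\deg_{Q_n}(v_0)\ge k)\le C\rho^k$ with $\rho<1/e$, which you do not prove and which is in fact false. Under the trivial bijection a uniform quadrangulation with $n$ faces corresponds to a uniform rooted planar map with $n$ edges, and the degree of a white vertex of $Q_n$ equals the degree of the corresponding vertex of the map; the root-vertex degree of a uniform planar map has tail of order $k^{1/2}(5/6)^k$, and $5/6$ is far above $1/e\approx 0.368$. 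Your own CVS heuristic points the same way: the number of minimal-label corners is governed by the number of visits of the label walk to its infimum, which has a geometric tail with rate on the order of $2/3$, again above $1/e$. With the true $\rho$ the union bound gives $(n+2)\rho^{\ln n}=n^{1+\ln\rho+o(1)}\to\infty$, so the first-moment method cannot reach the threshold $\ln n$; it only yields $\vD_n\le C\ln n$ for some constant $C>1$ (and indeed the second-moment method shows $\vD_n$ is concentrated around $\ln n/\ln(6/5)\approx 5.5\ln n$ for the underlying planar maps). If you aim instead for the weaker statement $\vD_n\le C\ln n$ --- which is all that is actually used later in the paper, in the proofs of Theorem \ref{Th_isoperimetric_inequality_V} and Lemma \ref{Lemma covering of Qn by balls} --- then your approach closes immediately, since any geometric tail suffices and a crude counting bound already gives one.

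For comparison, the paper's proof computes no tail at all. It applies the trivial bijection to obtain a uniform rooted planar map $M_n$ with $n$ edges, observes that degrees of white vertices of $Q_n$ are vertex degrees of $M_n$ while degrees of black vertices of $Q_n$ are face degrees of $M_n$, quotes Gao--Wormald's theorem on the maximum vertex degree of a random planar map, and uses self-duality of $M_n$ to transfer the bound to face degrees. Be aware that the constant implicit in Gao--Wormald is precisely the $1/\ln(6/5)$ above, so even the cited result really delivers $\vD_n\le C\ln n$ rather than $\vD_n\le\ln n$; your instinct that the constant $1$ in front of $\ln n$ is delicate is therefore correct, and the honest conclusion is that both your route and the paper's should be read as proving the $O(\ln n)$ bound, which is what the rest of the paper requires.
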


\begin{proof}
We recall the so-called ``trivial'' bijection between the set of all quadrangulations with $n$ faces and the set of all rooted planar maps with $n$ edges: let $Q$ be a quadrangulation with $n$ faces, and color its vertices so that the tail of the root vertex is white, and every two adjacent vertices have different colors. In each face of $Q$, draw a diagonal between the two white corners of the face. The map obtained by keeping only the added diagonals, together with the white vertices of $Q$, is a planar map $M$ with $n$ edges, that we root at the edge contained in the root face of $Q$ in such a way that the root vertex is the same as that of $Q$.

The set of all white vertices of $Q$ is exactly the set of all vertices of $M$, and every face of $M$ contains exactly one black vertex of $Q$. An easy observation is that, if $x$ is a white vertex of $Q$, then $\deg_Q(x) = \deg_M(x)$. If $x$ is a black vertex of $Q$, then $\deg_Q(x)$ is the degree of the face of $M$ that contains $x$. 

Denote the image of $Q_n$ under the trivial bijection by $M_n$. $M_n$ is uniformly distributed over the set of all rooted planar maps with $n$ edges. \cite[Theorem 3]{gao2000distribution} ensures that, writing $\vD^{M_n}$ for the maximum degree of a vertex of $M_n$,
\begin{equation*}
\P(\vD^{M_n} > \ln n) \ulim n \infty 0 .
\end{equation*}
By self-duality of $M_n$, the same holds when replacing $\vD^{M_n}$ by the maximum degree of a face of $M_n$. The lemma follows by our previous observation.
\end{proof}

\begin{proof}[Proof of Theorem \ref{Th_isoperimetric_inequality_V}.]

Let $\nu>0$. Fix $N(\nu)$ so that $8 n^{-\nu/3} \ln n < 1$ for every $n \geq N(\nu)$. 
Let $Q$ be a quadrangulation of size $n \geq N(\nu)$ such that
\begin{align}
\label{Eq_hyp_ineg_isop}
\inf_{S\subset F(Q)\ :\ 0<|S| \leq n/2} \frac{|\partial S|^{4/3}}{|S|} &\geq n^{-2/3-\nu} , \\
\label{Eq_max_degre_Q}
\max_{x \in V(Q)} \deg_{Q}(x) &\leq \ln n .
\end{align}
Recall that the probability that $Q_n$ satisfies \eqref{Eq_hyp_ineg_isop} goes to $1$ as $n \to \infty$ by Theorem \ref{Th_isoperimetric_inequality}, and the probability that $Q_n$ satisfies \eqref{Eq_max_degre_Q} goes to $1$ as well by \lemref{Lemma_max_deg_Qn}.

For every $A \subset V(Q)$, we let $E = E(A,A^c)$ be the set of all edges of $Q$ with one end in $A$ and one end in $A^c = V(Q) \setminus A$. Assume first that $|E| \leq n^{1/4-\nu}$.

\begin{figure}[!ht]
\begin{center}
\includegraphics[width=0.7\textwidth]{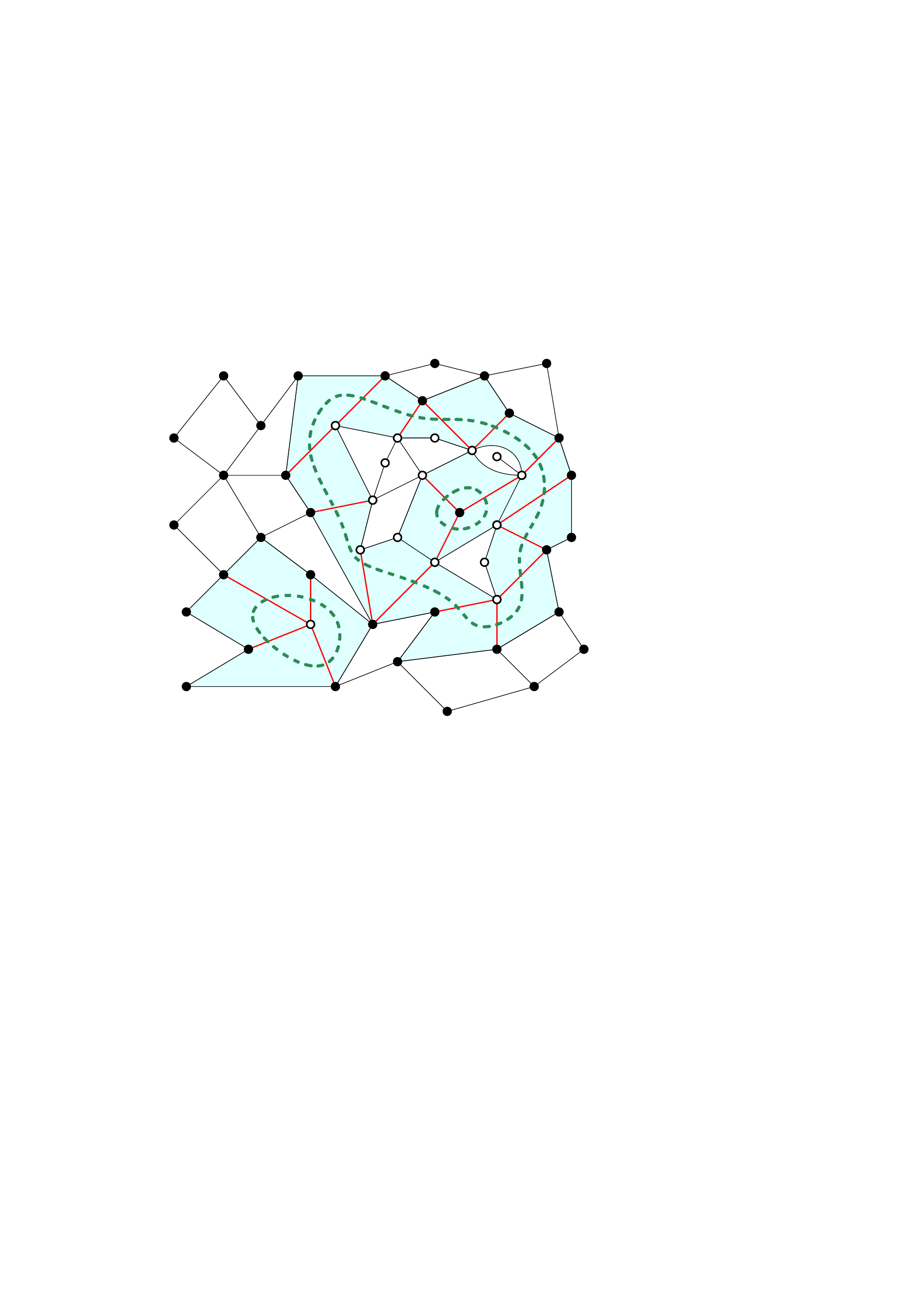}
\end{center}
\caption{The set $A$ is the set of all black vertices. Edges in $E(A,A^c)$, that is with one end in $A$ and the other in $A^c = V(Q) \setminus A$, are red. Faces in $S(A) \cap S(A^c)$ are colored in light blue, faces that are only in $S(A)$ are colored in grey, and faces that are only in $S(A^c)$ are colored in white. $S(A) \cap S(A^c)$ can be interpreted as the set of all faces crossed by the interfaces between the set of all white and the set of all black vertices (in dashed green).}
\label{Fig_Interface}
\end{figure}

Let $S(A)$ be the set of all faces that are incident to a vertex of $A$. We can see that $|\partial S(A)| \leq 2 |E| \leq 2n^{1/4-\nu}$ (observe that every face of $S(A)$ that is incident to at least one edge of $\partial S(A)$ is incident to at most two edges of $E$). 
It follows by \eqref{Eq_hyp_ineg_isop} that either $|S(A)| \leq 2^{4/3} |E|^{4/3} n^{2/3+\nu} \leq 3n^{1-\nu/3}$, or $|S(A)| > n/2$.

Let us show by contradiction that the second case is not possible whenever $\pi_Q(A) \leq 1/2$, where $\pi_{Q}$ is the stationary measure of the lazy random walk on $V(Q)$. If $|S(A)|>n/2$, then $|S(A)^c|<n/2$, and since $|\partial S(A)^c| = |\partial S(A)| \leq 2n^{1/4-\nu}$, \eqref{Eq_hyp_ineg_isop} applied to $S(A)^c$ gives $|S(A)^c| \leq 3n^{1-\nu/3}$. Consider now $S(A^c)$. It is immediate that $S(A) \cup S(A^c) = F(Q_n)$; on the other hand, any face of $S(A^c) \cap S(A)$ is incident to an edge of $E$, so $|S(A^c)\cap S(A)| \leq 2|E|$. Consequently, $|S(A^c)| = |S(A)^c| + |S(A^c)\cap S(A)| \leq 4 n^{1-\nu/3}$, and additionally $|A^c| \leq 4|S(A^c)| \leq 16 n^{1-\nu/3}$. By the expression of $\pi_Q$ and \eqref{Eq_max_degre_Q}, $\pi_Q(A^c) \leq \frac{|A^c|}{4n} \max_{x \in V(Q)} \deg_Q(x) < 1/2$. It follows that $\pi_Q(A) > 1/2$, which contradicts our assumption.

It follows that $|S(A)| \leq 2^{4/3} |E|^{4/3} n^{2/3+\nu}$ as soon as $\pi_Q(A) \leq 1/2$. The bounds $|A| \leq 4|S(A)|$ and $\pi_Q(A) \leq \frac{|A|}{4n} \max_{x \in V(Q)} \deg_Q(x)$ give that for every $A \subset V(Q)$ with $\pi_Q(A) \leq 1/2$ and $|E(A,A^c)| \leq n^{1/4-\nu}$,
\begin{equation*}
\pi_Q(A) \leq 2^{4/3} |E(A,A^c)|^{4/3} n^{-1/3+\nu} \ln n .
\end{equation*}
Considering separately every $A \subset V(Q)$ with $\pi_Q(A) \leq 1/2$ and $|E(A,A^c)| > n^{1/4-\nu}$, we have shown that
\begin{equation*}
\inf\l\{ \frac{|E(A,A^c)|^{4/3}}{4n\pi_{Q}(A)} \ : \ A \subset V(Q), \ \pi_Q(A) \leq 1/2 \r\} \geq n^{-2/3-2\nu} .
\end{equation*}
This holds as soon as \eqref{Eq_hyp_ineg_isop} and \eqref{Eq_max_degre_Q} are satisfied. As observed at the beginning of the proof, the probability that $Q_n$ satisfies both conditions goes to $1$ as $n \to \infty$, so Theorem \ref{Th_isoperimetric_inequality_V} follows. 

\end{proof}

\begin{proof}[Proof of Theorem \ref{Th_mixing_time}.] For $x,y \in V(Q)$, $x \neq y$, set $w_{x,y} \eqdef p_Q(x,y) \pi_Q(x) \geq 1/(4n)$. Note that for every $A \subset V(Q_n)$, $\sum_{x \in A, x \notin A} w_{x,y} = \frac{|E(A,A^c)|}{4n}$. 
By Theorem \ref{Th_isoperimetric_inequality_V}, with probability going to $1$ as $n\to\infty$, 
\begin{equation*}
\Phi \eqdef \inf\l\{ \frac{\sum_{x \in A, y\notin A} w_{x,y}}{\pi_{Q_n}(A)} \ : \ A \subset V(Q_n), \ \pi_{Q_n}(A) \leq 1/2 \r\} \geq n^{-3/4+\nu} .
\end{equation*}
We then apply \cite[Corollary 2.3]{jerrum1989approximating}: with probability going to $1$ as $n \to \infty$,
\begin{equation*}
\tau_{Q_n}(\ve) \leq n^{3/2+2\nu} (\ln(4n) + \ln(1/\ve) ) .
\end{equation*}

\end{proof}

\section[Standard hulls and density with the UIPQ]{Standard hulls in finite quadrangulations and density with the UIPQ}
\label{Sec_Standard hulls in finite quadrangulations and density with the UIPQ}

In this section, we establish Lemma \ref{Lemma_density_hulls_UIPQ_Qn}, which is stated below. Roughly speaking, this Lemma states that the probability of observing a given (simply connected) neighborhood of the root vertex in a uniform quadrangulation of finite size is smaller than the probability of observing the same neighborhood in the \UIPQ, times a factor that only involves the size of the neighborhood and the size of the finite quadrangulation. Lemma \ref{Lemma_density_hulls_UIPQ_Qn} will allow us to transfer a bound on the volume of hulls, established in Section \ref{Sec_tail_volume}, from the \UIPQ to finite quadrangulations in Section \ref{Section_Covering}.

Recall that a quadrangulation is a rooted planar map such that all its faces have degree $4$. We denote the set of all quadrangulations with $n$ faces by $\Q_n$, and the cardinality of $\Q_n$ by $\card{\Q_n}$. A quadrangulation $Q$ \emph{with a simple boundary} is a rooted planar map such that all its faces but (possibly) the face on the right of its root edge have degree $4$, and the boundary of this face is a simple cycle. The face on the right of its root edge is called the external face, and its boundary is called the boundary of $Q$; the other faces of $Q$ are called inner faces. We write $\Q_{n,p}$ for the set of all rooted quadrangulations with a simple boundary of length $2p$ and $n$ inner faces. Note that $\Q_{n,p} = \emptyset$ if $p > n+1$ as an easy consequence of Euler's formula. By convention, we fix that $\Q_0$ contains one quadrangulation, the “edge-quadrangulation”, and $\Q_{0,1}$ contains the unique map with one face and one edge. 
By \cite[Section 6.2]{CurienLeGall2015scaling}, for every $n\geq 0$ and $1 \leq p\leq n+1$,
\begin{align}
\label{Eq_formula_Qn}
\card{\Q_n} &= 3^{n} \frac{2 (2n)!}{n!(n+2)!} , \\ 
\label{Eq_formula_Qnp}
\card{\Q_{n,p}} &= 3^{n-p} \frac{(3p)!(2n+p-1)!}{(n+1-p)!p!(2p-1)!(n+2p)!} .
\end{align}
The following asymptotics come from \eqref{Eq_formula_Qn} and \eqref{Eq_formula_Qnp}:
\begin{equation}
\label{Eq_asymp_Qn}
\card{\Q_{n}} \usim n \infty \frac{2}{\sqrt \pi} n^{-5/2} 12^n 
\end{equation}
and for every $p \geq 1$,
\begin{equation}
\card{\Q_{n,p}} \usim n \infty C_p n^{-5/2} 12^n 
\end{equation}
with
\begin{equation}
C_p = 2^p 3^{-p} \frac{(3p)!}{p!(2p-1)!} \frac{1}{2\sqrt \pi} .
\end{equation}

\begin{lemma}
\label{Lemma_asymptotic_cardinals}
There exists a constant $c>0$ such that for every $n>0, 1\leq p\leq n+1$,
\begin{equation}
\label{Eq_majoration_cardinaux}
\card{\Q_{n,p}} \leq c \cdot C_p n^{-5/2} 12^n .
\end{equation}
\end{lemma}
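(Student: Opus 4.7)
The plan is to rewrite $\card{\Q_{n,p}}$ so that the factor $C_p$ cleanly splits off, reducing the statement to a uniform upper bound on a ratio of factorials depending on $n$ and $p$. Substituting the explicit expression of $C_p$ into \eqref{Eq_formula_Qnp} one obtains
\[ \card{\Q_{n,p}} = 2\sqrt\pi \cdot C_p \cdot 3^n \cdot u_n(p), \qquad u_n(p) \eqdef 2^{-p} \cdot \frac{(2n+p-1)!}{(n+1-p)!\,(n+2p)!} , \]
so that \eqref{Eq_majoration_cardinaux} is equivalent to an inequality of the form $u_n(p) \leq c' \cdot n^{-5/2} \cdot 4^n$, uniform over $n \geq 1$ and $1 \leq p \leq n+1$.

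The core technical step I would carry out is to show that $u_n(p)$ is non-increasing in $p$ on $\{1,\dots,n+1\}$. Telescoping the factorials in the definition of $u_n(p)$ yields the explicit ratio
\[ \frac{u_n(p+1)}{u_n(p)} = \frac{(2n+p)(n+1-p)}{2(n+2p+1)(n+2p+2)} , \qquad 1 \leq p \leq n , \]
so the monotonicity reduces to the elementary identity
\[ 2(n+2p+1)(n+2p+2) - (2n+p)(n+1-p) = 9p(n+p) + 4n + 11p + 4 , \]
whose right-hand side is manifestly positive for all $n,p \geq 0$. This is the main (though routine) calculation; I do not expect any deeper obstacle.

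It then remains only to bound $u_n(1) = \tfrac12 \cdot \frac{(2n)!}{n!(n+2)!}$. Comparing directly with \eqref{Eq_formula_Qn} gives the identity $u_n(1) = \card{\Q_n}/(4\cdot 3^n)$, and the asymptotic \eqref{Eq_asymp_Qn} yields
\[ u_n(1) \usim n \infty \frac{1}{2\sqrt\pi}\, n^{-5/2}\, 4^n . \]
Since the sequence $\bigl(u_n(1) \cdot n^{5/2} \cdot 4^{-n}\bigr)_{n\geq 1}$ is positive and converges to a finite limit, it is bounded by some constant $c'$ uniformly in $n$. Combined with the monotonicity, this gives $u_n(p) \leq u_n(1) \leq c' \cdot n^{-5/2} \cdot 4^n$ for every $1 \leq p \leq n+1$, and setting $c \eqdef 2\sqrt\pi \cdot c'$ closes the argument.
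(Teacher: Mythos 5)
Your proof is correct, and it takes a genuinely different route from the paper's. The paper applies Stirling's formula to the factorial ratio in \eqref{Eq_formula_Qnp}, reorganizes the result into the form $c\, C_p n^{-5/2} 12^n \exp(n f_n(\frac{p-1}{n}))$, and shows $f_n \leq 0$ by expressing $f_n$ through a convex auxiliary function $\chi_n$ (with the case $p=n+1$ handled separately, since Stirling for $(n+1-p)!$ degenerates there). You instead factor out $C_p$ exactly, observe that $u_n(p) = \card{\Q_{n,p}}/(2\sqrt\pi\, C_p\, 3^n)$ is non-increasing in $p$ via the ratio $u_n(p+1)/u_n(p) = \frac{(2n+p)(n+1-p)}{2(n+2p+1)(n+2p+2)}$ — your identity $2(n+2p+1)(n+2p+2)-(2n+p)(n+1-p) = 9p(n+p)+4n+11p+4$ checks out — and then reduce to $p=1$, where $u_n(1)=\card{\Q_n}/(4\cdot 3^n)$ and \eqref{Eq_asymp_Qn} (which rests only on the exact formula \eqref{Eq_formula_Qn}, so there is no circularity) supply the bound $u_n(1) \leq c' n^{-5/2}4^n$. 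Your argument is more elementary and entirely free of asymptotic subtleties in $p$: the discrete monotonicity check replaces the uniform-in-$p$ application of Stirling, and no boundary case needs separate treatment. What the paper's approach buys in exchange is an explicit quantitative form of the $p$-dependence (the factor $\exp(n f_n(\frac{p-1}{n}))$), which would matter if one wanted sharp two-sided asymptotics uniformly in $p$; for the one-sided bound actually claimed in the lemma, your route is cleaner.
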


\begin{proof}
If $p = n+1$, then \eqref{Eq_majoration_cardinaux} is directly verified. 
Using Stirling's formula, we can find a positive constant $c$ such that for every $n > 0$, for every $1 \leq p \leq n$, 
\begin{align}
\card{\Q_{n,p}} &= 3^{n-p} \frac{(3p)!}{p!(2p-1)!} \frac{(2n+p-1)!}{(n+1-p)!(n+2p)!} \nonumber \\
&\leq c \cdot 2\sqrt{\pi} C_p 2^{-p} 3^{n} \frac{(2n+p+1)^{2n+p+1}}{(n+1-p)^{n+1-p}(n+2p)^{n+2p}} \frac{1}{(2n+p+1)(2n+p)} \\
&\qquad \qquad \cdot \sqrt{\frac{2\pi (2n+p+1)}{4\pi^2 (n+1-p)(n+2p)}} \nonumber \\
&= c \cdot C_p n^{-5/2} 12^{n} \frac{(1+\frac{p+1}{2n})^{2n+p+1/2}}{(1-\frac{p-1}{n})^{n-p+3/2}(1+\frac{2p}{n})^{n+2p+1/2}} \frac{1}{1+\frac{p}{2n}} \nonumber \\
\label{Eq_asymp_Qnp_proof}
&= c \cdot C_p n^{-5/2} 12^{n} \exp\l( n f_n\pfrac{p-1}{n} \r)  \frac{1}{1+\frac{p}{2n}} ,
\end{align}
where for every $n>0$ and $x \in [0,1)$,
\begin{multline*}
f_n(x) \eqdef \l(2+x+\frac{3}{2n} \r) \ln\l(1+\frac{x}{2} + \frac{1}{n} \r)  -
\l(1-x+\frac{1}{2n}\r)  \ln\l(1-x \r) \\ -
\l(1+2x+\frac{5}{2n}\r) \ln \l( 1+2x+\frac{2}{n} \r) .
\end{multline*}
Let $\chi_n(u) = \l(u + \frac{1}{2n} \r) \ln u$, and rewrite 
\begin{equation*}
f_n(x) = 2 \chi_n\l( \frac{a+b}{2} \r) - \chi_n(a) - \chi_n(b) - \frac{1}{2n} \ln \l(1+\frac{x}{2} + \frac{1}{n} \r) 
\end{equation*}
for $a = 1-x$ and $b= 1+2x+2/n$. Since $\frac{1}{2n} \ln \l(1+\frac{x}{2} + \frac{1}{n} \r) \geq 0$, and $\chi_n$ is convex on $[1/(2n), \infty)$, taking $x = (p-1)/n$ (ensuring $a \geq 1/n$) gives $f_n\pfrac{p-1}{n} \leq 0$.

\end{proof}

For every quadrangulation $Q$ and $v \in V(Q)$, we write $B_Q(v,r)$ for the $r$-ball (or ball of radius $r$) centered at $v$, defined as the union of faces that are incident to a vertex at distance at most $r-1$ from $v$. We define the standard $r$-hull $\Hull_Q(v,r)$ as the union of the ball with the connected components of its complement, excluding the one containing the most faces (if there is an ambiguity, we lift it by a deterministic rule). We view $\Hull_Q(v,r)$ as a quadrangulation with a simple boundary (its external face corresponds to the excluded component of the complement of $B_Q(v,r)$). 
We say a map $b$ is an \emph{admissible standard $r$-hull} if there exists a quadrangulation $Q$ and $v \in V(Q)$ such that $b = \Hull_Q(v,r)$.

Let $Q_n$ be uniformly distributed over $\Q_n$, and $Q_\infty$ be the \UIPQ. We denote the root vertex of $Q_n$, \resp $Q_\infty$ by $\rho_n$, \resp $\rho_\infty$.

\begin{lemma}
\label{Lemma_density_hulls_UIPQ_Qn}
There exists $c'>0$ such that for every $r>0$, for every $n>0$, for every admissible standard $r$-hull $b$ with $N$ inner faces, $0<N<n$,
\begin{equation}
\frac{\P(\Hull_{Q_n}(\rho_n,r) = b)}{\P(\Hull_{Q_\infty}(\rho_\infty,r) = b)} \leq c' \pfrac{n-N}{n}^{-5/2} .
\end{equation}
\end{lemma}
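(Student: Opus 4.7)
My plan is to compute both probabilities explicitly as ratios of quadrangulation counts, and then to bound the resulting ratio using \lemref{Lemma_asymptotic_cardinals} together with a matching lower bound on $\card{\Q_n}$.

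First I would verify a bijective identity: if $b$ is an admissible standard $r$-hull with $N$ inner faces and boundary of length $2p$, then the data of a quadrangulation $Q\in\Q_n$ with $\Hull_Q(\rho,r)=b$ is the same as the data of a quadrangulation with a simple boundary of length $2p$ and $n-N$ inner faces, glued canonically along the boundary of the external face of $b$. This identification yields
\[
\P(\Hull_{Q_n}(\rho_n,r)=b)=\frac{\card{\Q_{n-N,p}}}{\card{\Q_n}}.
\]
For the \UIPQ, I would use the fact that $Q_n \to Q_\infty$ locally, combined with the asymptotics $\card{\Q_n}\sim\frac{2}{\sqrt\pi}n^{-5/2}12^n$ and $\card{\Q_{n,p}}\sim C_p n^{-5/2}12^n$ recalled just before \lemref{Lemma_asymptotic_cardinals}, to obtain
\[
\P(\Hull_{Q_\infty}(\rho_\infty,r)=b)=\lim_{n\to\infty}\frac{\card{\Q_{n-N,p}}}{\card{\Q_n}}=\frac{\sqrt\pi}{2}\,C_p\,12^{-N}.
\]

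With these two formulas the desired ratio takes the form
\[
\frac{\P(\Hull_{Q_n}(\rho_n,r)=b)}{\P(\Hull_{Q_\infty}(\rho_\infty,r)=b)}=\frac{2}{\sqrt\pi}\cdot\frac{\card{\Q_{n-N,p}}\,12^{N}}{C_p\,\card{\Q_n}}.
\]
I would then apply \lemref{Lemma_asymptotic_cardinals} to bound the numerator by $c\cdot C_p(n-N)^{-5/2}12^{n-N}$, which cancels both the $C_p$ and the $12$-powers and leaves only $(n-N)^{-5/2}\cdot 12^n$. For the denominator I would prove a uniform lower bound of the form $\card{\Q_n}\geq c_* n^{-5/2}12^n$ for every $n\geq 1$; this is immediate from the asymptotic \eqref{Eq_asymp_Qn}, since the sequence $n^{5/2}12^{-n}\card{\Q_n}$ is positive at every $n\geq 1$ and converges to $2/\sqrt\pi$, hence has a strictly positive infimum. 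Combining the two bounds yields
\[
\frac{\P(\Hull_{Q_n}(\rho_n,r)=b)}{\P(\Hull_{Q_\infty}(\rho_\infty,r)=b)}\leq \frac{2c}{\sqrt\pi\,c_*}\pfrac{n-N}{n}^{-5/2},
\]
which is the claim with $c'=2c/(\sqrt\pi\,c_*)$.

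The main obstacle will be the bookkeeping in the first step: I need to make sure that the canonical gluing is genuinely bijective with no hidden combinatorial factor, and in particular that the deterministic tie-breaking rule used to define $\Hull$ is compatible with the gluing (i.e. the glued external region really is the excluded largest complementary component of the ball in the new map). Once the two probabilities are written in the clean form above, the rest of the argument is essentially arithmetic: the key cancellation $12^N\cdot 12^{n-N}=12^n$ lines up exactly with the lower bound on $\card{\Q_n}$, so the quantitative step reduces directly to \lemref{Lemma_asymptotic_cardinals}.
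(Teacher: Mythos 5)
Your overall strategy coincides with the paper's: write both probabilities through the counting ratio $\card{\Q_{n-N,p}}/\card{\Q_n}$, bound the numerator via \lemref{Lemma_asymptotic_cardinals}, and bound the denominator via a uniform lower bound on $\card{\Q_n}$ extracted from \eqref{Eq_asymp_Qn}. The final arithmetic, including the cancellation of $C_p$ and of the powers of $12$, is exactly right.

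The one genuine gap is the ``bijective identity'' you start from, and the obstacle you flag at the end cannot be removed: the equality $\P(\Hull_{Q_n}(\rho_n,r)=b)=\card{\Q_{n-N,p}}/\card{\Q_n}$ is false in general. Gluing an arbitrary $Q'\in\Q_{n-N,p}$ into the external face of $b$ produces a map $Q$ in which the component of the complement of $B_Q(\rho,r)$ carrying $Q'$ need not be the one with the most faces (for instance when $n-N$ is smaller than some complementary component already contained in $b$), so the hull of $Q$ excludes a different component and is not equal to $b$. What is true — and all that is needed for the numerator — is the inclusion of events $\{\Hull_{Q_n}(\rho_n,r)=b\}\subset\{b\subset Q_n\}$, whence
\begin{equation*}
\P(\Hull_{Q_n}(\rho_n,r)=b)\;\leq\;\P(b\subset Q_n)\;=\;\frac{\card{\Q_{n-N,p}}}{\card{\Q_n}} .
\end{equation*}
Symmetrically, the \UIPQ computation should go through the containment event rather than the hull event: $\{b\subset Q_n\}$ is a local event, so local convergence gives $\P(b\subset Q_\infty)=\lim_n\card{\Q_{n-N,p}}/\card{\Q_n}=\frac{\sqrt\pi}{2}C_p 12^{-N}$, and for the \UIPQ one has the genuine \emph{equality} $\P(\Hull_{Q_\infty}(\rho_\infty,r)=b)=\P(b\subset Q_\infty)$ because the complement of the ball has a unique infinite component, which is automatically the excluded one. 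This equality is essential, since the denominator must be bounded from \emph{below}; applying local convergence directly to the non-local event $\{\Hull_{Q_n}=b\}$, as you propose, is not justified. With these two one-sided statements in place of your claimed bijection, your computation yields the lemma with the same constant.
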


\begin{proof}
Let $b$ be a rooted planar map with a distinguished face, such that all faces but the distinguished one have degree 4. We assume that $b$ has $N$ inner faces (not counting the distinguished face), and that its distinguished face has simple boundary of length $2p$. Finally, we mark an oriented edge on the boundary of the distinguished face of $b$ by some deterministic procedure that only involves $b$. 

Let $Q$ be a quadrangulation with $n$ faces and root vertex $\rho$. Suppose that $n>N$. Then $b \subset Q$ holds if and only if $Q$ is obtained by gluing a quadrangulation $Q'$ with simple boundary of length $2p$ and $n-N$ inner faces inside the distinguished face of $b$, so that the root edge of $Q'$ is glued on the marked edge of $b$. From \eqref{Eq_asymp_Qn}, we can find $c''$ large enough such that for every $n>0$, we have $(\card{\Q_n})^{-1} \leq c'' n^{5/2} 12^{-n}$. It follows that for every $n>N$, using \lemref{Lemma_asymptotic_cardinals}, 
\begin{align*}
\P(b \subset Q_n) &= \frac{\card{\Q_{n-N,p}}}{\card{\Q_n}} \\
&\leq \frac{c\, c'' \cdot C_p (n-N)^{-5/2} 12^{n-N}}{n^{-5/2} 12^n} \\
&\leq c\, c'' \cdot C_p \pfrac{n-N}{n}^{-5/2} 12^{-N} .
\end{align*}
On the other hand, since the \UIPQ is the local limit of $Q_n$ as $n$ goes to infinity, 
\begin{align*}
\P(b \subset Q_\infty ) &= \lim_{n \to \infty} \P(b \subset Q_n) \\
&= \lim_{n\to\infty} \frac{\card{\Q_{N-n,p}}}{\card{\Q_n}} \\
&= \lim_{n\to\infty} \frac{C_p (n-N)^{-5/2} 12^{n-N}}{\frac{2}{\sqrt \pi} n^{-5/2} 12^n} \\
&= \frac{\sqrt \pi}{2} C_p 12^{-N}.
\end{align*}
If $b$ is an admissible standard $r$-hull, then 
$\P(\Hull_{Q_n}(\rho_n,r) = b) \leq \P(b \subset Q_n)$, and $\P(\Hull_{Q_\infty}(\rho_\infty,r) = b) = \P(b \subset Q_\infty)$. 
Combining the two bounds yields the lemma.
\end{proof}

\section[Laplace transform of the volume of hulls]{Laplace transform and tail estimates of the volume of hulls in the UIPQ}
\label{Sec_Laplace_transform_hull_volume}

The goal of this section is to establish a bound on the probability that the volume of a standard hull of the \UIPQ is large. We establish such a bound for truncated hulls, for which the analysis is simpler thanks to the skeleton decomposition of the \UIPQ. The bound is then transferred to standard hulls using the inclusion \eqref{Eq_inclusion_hulls}.

\subsection{Preliminaries}

Following the definition in \cite[Section 2]{gall2017separating}, a truncated quadrangulation is a rooted planar map with a distinguished face, called the external face, such that
\begin{romenum}
\item the external face $f$ has simple boundary,
\item every edge incident to $f$ is also incident to a triangular face, and these triangular faces are all distinct, 
\item every other face has degree 4.
\end{romenum}
The faces that are not the external face are called inner faces. 
Consider a one-ended infinite quadrangulation $Q$ of the plane and assume that it is drawn on the plane in such a way that any compact set of the plane intersects only a finite number of faces of $Q$ (the \UIPQ can be drawn in this way). 
Label every vertex of $Q$ by its distance to the root vertex. Fix $r>0$, and in every face whose incident vertices have label (in clockwise order) $r$, $r-1$, $r$, $r+1$, draw an edge between the two corners labeled $r$. The collection of added edges forms a union of (not necessarily disjoint) simple cycles, one of which is ``maximal'' in the sense that the connected component of its complement containing the root vertex of $Q$ also contains every other added edge (see \cite[Lemma 5]{gall2017separating}). We denote this maximal cycle by $\partial_r Q$. 
Adding the edges of $\partial_r Q$ to $Q$ and removing the infinite connected component of the complement of $\partial_r Q$ gives a truncated quadrangulation, which we call the \emph{truncated $r$-hull} centered at the root vertex of $Q$, or in short ``the truncated $r$-hull of $Q$'', and denote by $\Htr_{Q}(r)$.

Let us explain why \eqref{Eq_inclusion_hulls} holds. 
\cite[Lemma 5]{gall2017separating} ensures that every vertex of $Q$ at distance less than or equal to $r+1$ is contained in $\Htr_{Q}(r+1)$. Since the standard $r$-hull of $Q$ is bounded by a cycle of edges of $Q$ that only visits vertices at distance $r$ or $r+1$ of the root vertex, it follows that $\Hull_{Q}(r) \subset \Htr_Q(r+1)$. 
On the other hand, $\partial_r Q$ is entirely contained in faces of $B_Q(r)$, so when adding the finite connected components of its complement one gets $\Htr_Q(r) \subset \Hull_Q(r)$.

We now describe the \emph{skeleton decomposition} of $Q$, which encodes the structure of every $\Htr_Q(r)$ using a forest of plane trees and a collection of truncated quadrangulations. This decomposition was first described in \cite{Krikun2008local}; see \cite[Section 2.3]{gall2017separating} or \cite[Section 2]{lehe2019fpp} for a more detailed explanation that is compatible with our notations. 
 Let $Q'$ be the following modification of $Q$: split the root edge of $Q$ into a face of degree two, add a loop inside this face that is incident to the root vertex of $Q$, and root $Q'$ at the added edge so that the face of degree 1 lies on the right of the new root edge, see \figref{Fig_Root}. Let $\partial_0 Q$ be the cycle of $Q'$ made of the single root edge of $Q'$. 
 For every $r>0$ and every edge $e$ of $\partial_r Q$, $e$ splits a face $f$ of $Q'$ into two triangular faces; the one that is contained in $\Htr_Q(r)$ is called the \emph{downward triangle with top edge $e$}. The downward triangles cut the part of $Q'$ outside $\partial_{0} Q$ into a collection of \emph{slots}, filled with finite maps (possibly reduced to a single edge), see \figref{Fig_Downward_Triangles}. Every slot contained between $\partial_{r-1} Q$ and $\partial_r Q$ is incident to a unique vertex $v$ of $\partial_r Q$: we say that the slot is associated with the edge of $\partial_r Q$ with tail vertex $v$, where the edges of $\partial_r Q$ are oriented clockwise.

\begin{figure}[!ht]
\begin{center}
\includegraphics[scale=1.5]{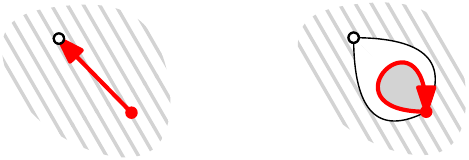}
\end{center}
\caption{We split the root edge of $Q$ (left, in red), and add an edge in the manner illustrated here to obtain $Q'$. $Q'$ is rooted at the new edge. The external face of $Q'$ is in grey.}
\label{Fig_Root}
\end{figure}

\begin{figure}[!ht]
\begin{center}
\includegraphics[width=0.7\textwidth]{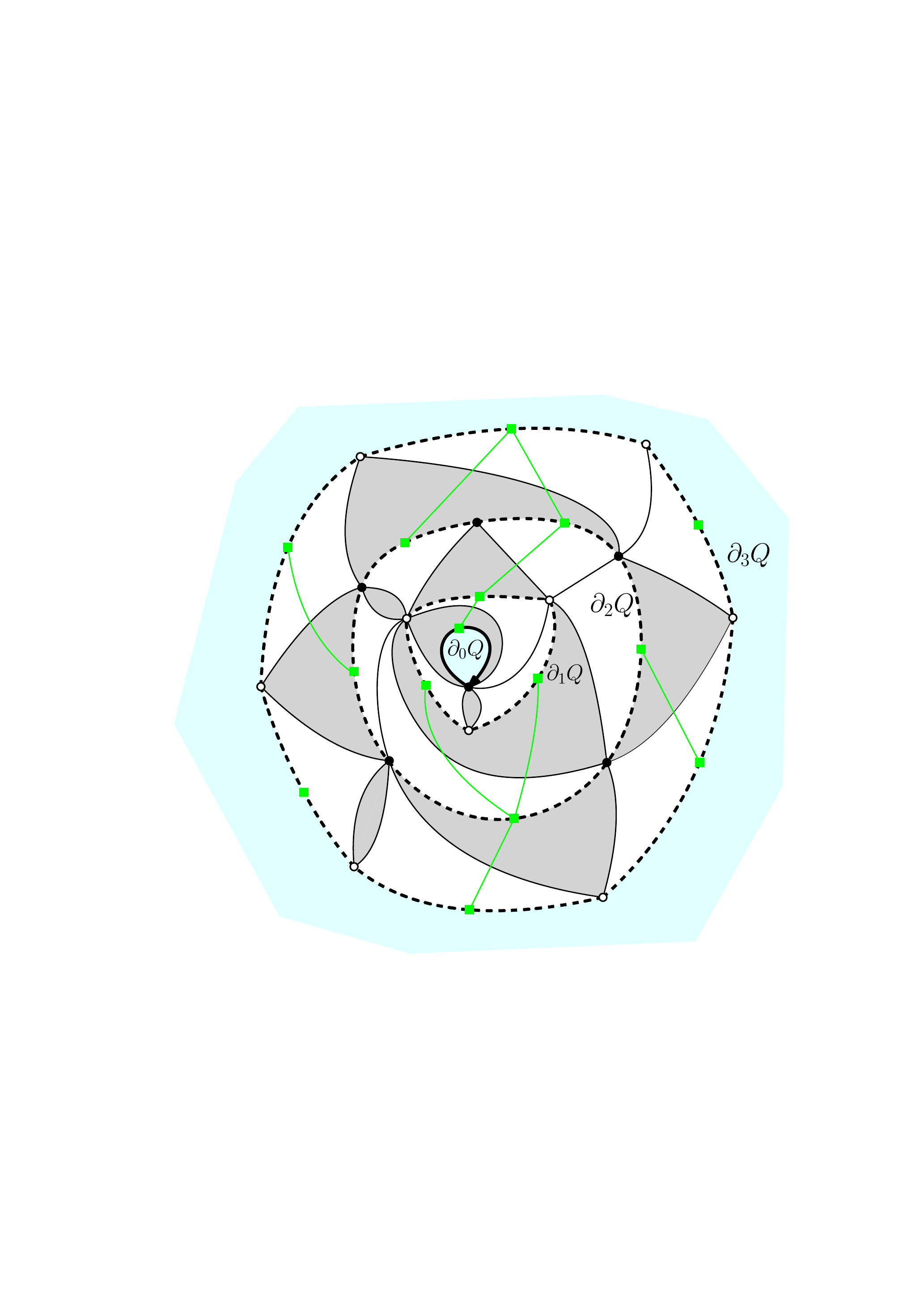}
\end{center}
\caption{The part of some quadrangulation $Q'$ contained between $\partial_0 Q$ and $\partial_{3} Q$. Downward triangles are in white, slots in grey, edges of $\partial_s Q$ in dashed thick lines (for $s\geq 1$, when they are not edges of $Q'$) or in thick lines (for s=0). The outside of this part of the map is in cyan. Also included is the genealogical relation, represented by the green forest (rooted at edges of $\partial_{3} Q$).} 
\label{Fig_Downward_Triangles}
\end{figure}

We define the following genealogical relation on edges of $\cup_{r\geq 0} \partial_r Q$: for every $r>0$ and every edge $e$ of $\partial_r Q$, $e$ is the parent of every edge of $\partial_{r-1} Q$ that is incident to the slot associated to $e$. 
The unique edge of $\partial_0 Q$ has no child. For every $r>0$, write $\FF_r$ for the collection of all edges in $\partial_s Q$, $0 \leq s \leq r$ together with its genealogical relation, seen as a planar forest, and number its trees from $1$ to $|\partial_r Q|$ according to the clockwise order on their roots in such a way that the tree with index $1$ contains the unique edge of $\partial_0 Q$. Then $\FF_r \in \F^{o}_{r,q}$ the set of all $(r,q)$-admissible forests, where we say (slightly modifying \cite[Section 2.3]{gall2017separating}) that a forest is $(r,q)$-admissible if
\begin{romenum}
\item it consists of $q$ rooted plane trees,
\item there is exactly one vertex at generation $r$, and no vertex at generation $r+1$ or more,
\item the vertex at generation $r$ is contained in the first tree.

\end{romenum}

Let $r>0$, and let $e$ be an edge of $\partial_r Q$. Consider the map $M_e$ filling the slot associated to $e$, and let $v$ be the vertex of $M_e$ that belongs to $\partial_r Q$ (it is the tail of $e$). The following modification changes $M_e$ into a truncated quadrangulation: add an edge inside the unbounded face of $M_e$ to create a triangular face, in such a way that $v$ is not anymore incident to the unbounded face, and root the $M_e$ at the added edge in such a way that the unbounded face lies on its right. See \figref{Fig_Slots}. We note that, if $c_e$ is the number of offspring of the edge $e$, then $M_e$ has perimeter $c_e+1$.

\begin{figure}[!ht]
\begin{center}
\includegraphics[width=1.0\textwidth]{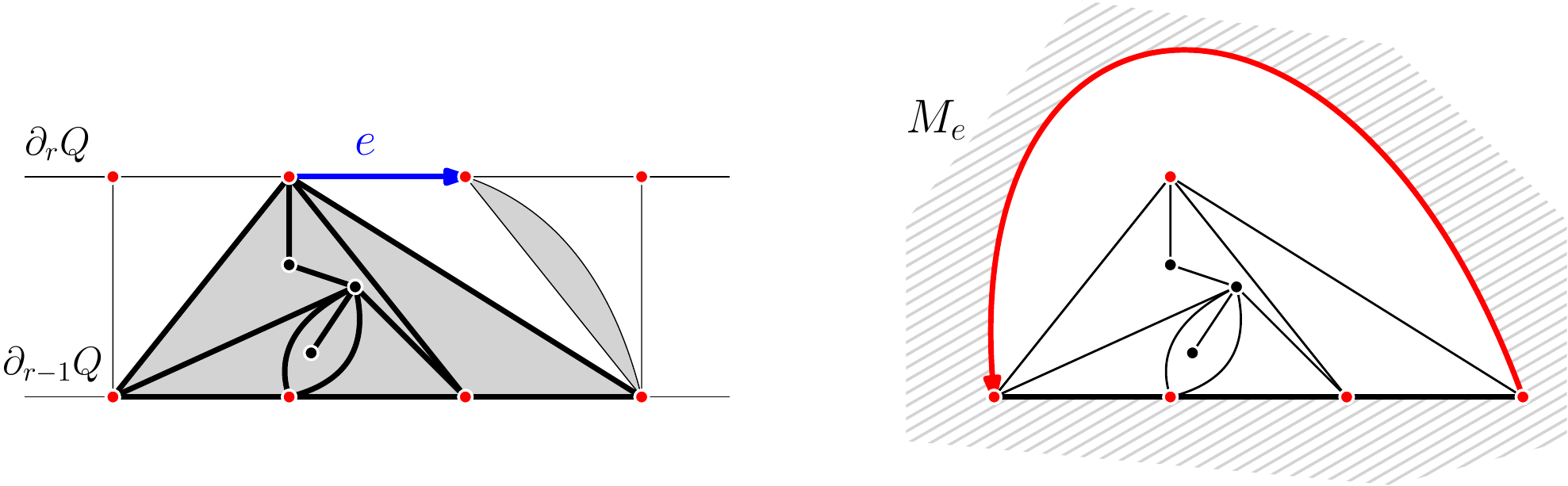}
\end{center}
\caption{The content of the slot associated to some edge $e$ of $\partial_r Q$ (left, thick black lines) can be seen as a truncated quadrangulation $M_e$ after adding one edge above the tail of $e$, and rooting $M_e$ at that new edge (right).}
\label{Fig_Slots}
\end{figure}

An admissible truncated $r$-hull $H$ is a truncated quadrangulation such that there exists an infinite quadrangulation $Q$ with $H = \Htr_Q(r)$. Consider such an infinite quadrangulation $Q$, and let $\FF^*_r$ be the collection of all vertices of $\FF_r$ at generation at most $r-1$. 
The data of the ``skeleton'' $\FF_r$ and the truncated quadrangulations filling the slots $(M_e)_{e \in \FF^*_r}$ is a function of $H$; it is called the \emph{skeleton decomposition of $H$}. Conversely, given a forest $\FF \in \F^{o}_{q,r}$ and a collection $(M_e)_{e \in \FF^*}$ of truncated quadrangulations such that $M_e$ has perimeter $c_e$ for every $e\in\FF^*$, we can recover a unique admissible truncated $r$-hull: the skeleton decomposition is bijective.

\subsection{Computing the Laplace transform of the hull volume}
\label{Sec_computing_Laplace}

The following Theorem draws inspiration from \cite{menard2018volumes}, which proves a similar result in the uniform infinite plane triangulation. In fact, Ménard also establishes the law of the volume of hulls conditioned on their perimeter. Since we are only concerned with the volume of truncated hulls, for concision we only prove the equivalent of their Theorem 1. We are nevertheless confident that more results of \cite{menard2018volumes} could be generalized to the \UIPQ. 

If $M$ is a truncated quadrangulation, we denote the number of inner faces of $M$ by $|M|$. 

\begin{theorem}[Laplace transform of the volume of truncated hulls]
\label{Th_Laplace}
For every $r>0$, $p \in (0,1)$,
\begin{equation}
\label{Eq_Laplace_transform_of_truncated_hull_volume}
\E\l[ (1-p^2)^{|\Htr_{Q_\infty}(r)|} \r] = (1-p)^{-1} \l(\KK \circ \psi_r\r)'(0) ,
\end{equation}
where 
\begin{align}
\nonumber \KK(t) &= \frac{3}{4}\sqrt{\frac{8+t}{t}} ,\\
\label{Eq_expression_psi}
\psi_r(u) &= p+ \frac{6p}{-1+\sqrt{\frac{2(1-p)}{p+2}} \cosh(B(u)+ry)} ,\\
\nonumber y &= \arccosh\pfrac{2p+1}{1-p} ,\\
\nonumber B(u) &= \arccosh\l( \sqrt{\frac{p+2}{2(1-p)}}\l[1+\frac{6p}{(1-u)(1-p)} \r] \r) .
\end{align}
\end{theorem}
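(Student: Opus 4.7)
My plan is to compute this Laplace transform via the skeleton decomposition of Section 4.1, following the general strategy of Ménard \cite{menard2018volumes} but adapted to quadrangulations. The central observation is that, by the skeleton decomposition, the volume of $\Htr_{Q_\infty}(r)$ is the sum of the volumes of the truncated quadrangulations $M_e$ filling the slots indexed by $e \in \FF_r^*$. Concretely, $|\Htr_{Q_\infty}(r)| = \sum_{e \in \FF_r^*} |M_e|$ (each added triangle corresponds to the boundary triangle of an $M_e$), and conditionally on the skeleton forest $\FF_r$ the maps $(M_e)_{e \in \FF_r^*}$ are independent and distributed as Boltzmann truncated quadrangulations with perimeter $c_e+1$, where $c_e$ denotes the number of offspring of $e$ in $\FF_r$.

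First I would compute, for each integer $c \geq 0$, the slot Laplace transform
\[ g(c) \eqdef \E\bigl[(1-p^2)^{|M|}\bigr] \qquad \text{for a slot $M$ of perimeter } c+1, \]
using the enumeration formula \eqref{Eq_formula_Qnp} for $\card{\Q_{n,p}}$ and summing against a Boltzmann weight $12^{-|M|}$. This is an explicit hypergeometric-type sum that yields a closed form in $c$ and $p$; the function $\KK$ in the statement arises naturally from the generating series of these weights. Next, I would use the fact, established in \cite{Krikun2008local}, that the skeleton $\FF_r$ is a critical Bienaymé--Galton--Watson forest conditioned to have exactly one vertex at generation $r$, with an explicit offspring distribution $\mu$ derived from the enumeration of quadrangulations with boundary. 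This lets me write the expectation over $\FF_r^*$ as a multiplicative functional of the offspring, and factor it node by node: if $F(u) \eqdef \sum_c \mu(c) u^c$ and if we define $\phi_p(u) = F\bigl(g(\cdot)\,u\bigr)$ in an appropriate sense, then the Laplace transform for a height-$r$ tree (rooted at a boundary edge) equals $\phi_p^{\,r}(u)$ evaluated at $u=0$, up to boundary corrections.

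The iteration $\phi_p^{\,r}$ is the crux: the formula \eqref{Eq_expression_psi} is exactly the shape obtained after linearising $\phi_p$ by conjugation. I expect $\phi_p$ to be a Möbius-type transformation composed with an algebraic function, with fixed points at $u=p$ and (the analogue of) infinity, and multiplier such that $\phi_p$ is conjugate to the translation $t \mapsto t + y$ via the map $u \mapsto \arccosh\bigl(\sqrt{(p+2)/2(1-p)}\,[1+6p/((1-u)(1-p))]\bigr) = B(u)$. The constants $y = \arccosh((2p+1)/(1-p))$ and the prefactor $\sqrt{2(1-p)/(p+2)}$ are then determined by matching the two fixed points and the derivative at a fixed point. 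Once the conjugation is verified, $\phi_p^{\,r}(u) = B^{-1}(B(u) + ry)$ follows immediately, which, after an algebraic simplification of $B^{-1}$, yields exactly $\psi_r$.

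Finally I would recombine everything: the factor $(1-p)^{-1}$ and the $(\KK \circ \psi_r)'(0)$ come from unpacking the generating function over the distinguished boundary edge (the root edge of tree $1$ in the skeleton, which carries the $\partial_0 Q$ descendant). Differentiating $\KK \circ \psi_r$ at $0$ extracts the appropriate coefficient of the forest generating series, and $(1-p)^{-1}$ accounts for the Boltzmann normalisation of the root slot. The main obstacle will clearly be the algebraic verification of the conjugation relation $\phi_p = B^{-1} \circ T_y \circ B$ — identifying $\KK$, $B$ and $y$ requires the right guess for the change of variables, and then a careful (but routine) algebraic manipulation using the hyperbolic identities $\cosh(a+b) = \cosh a \cosh b + \sinh a \sinh b$. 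Everything else is bookkeeping with the explicit enumeration formulas from Section 3.
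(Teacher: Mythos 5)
Your proposal follows essentially the same route as the paper: skeleton decomposition into a forest plus Boltzmann-distributed slots, reduction to the $r$-th iterate of an offspring generating function (the paper merges the slot weights and the offspring law into a single distribution $\vT$ with generating function $\Phi$, via the auxiliary variables $s=1-p^2$, $t=2/(1+p)$), and an explicit solution of the iteration by conjugating to a translation through the substitution $T_r = 2/(1-\Phi^{(r)})$ and the hyperbolic-cosine addition formula. The only slip is your volume identity $|\Htr_{Q_\infty}(r)| = \sum_{e}|M_e|$, which should read $|H| = q-1+\sum_{e\in\FF^*}(|M_e|-1)$ with $q$ the perimeter; this bookkeeping correction is what produces the $(st)^q\vk_q$ weighting over the perimeter and hence the function $\KK$, but it would be caught in the course of the computation you outline.
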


\begin{proof}

Let us briefly recall some results about the enumeration of truncated quadrangulations. 
Denote the set of all truncated quadrangulations with $n$ inner faces and boundary length $p$ by $\Qtr_{n,p}$. 
\cite[Section 2.2]{Krikun2008local} gives an explicit expression for the generating function of truncated quadrangulations:
\begin{align}
\label{Eq_expression_U}
\nonumber U(x,y) &\eqdef \sum_{n >0, p>0} \card{\Qtr_{n,p}} x^n y^p \\
&= \frac{1}{2}\l( y-xy^2-1+\sqrt{y^2-2xy^3-2y+4xyq(x)+(xy^2-1)^2} \r) ,
\end{align}
with $q$ the generating function of quadrangulations:
\begin{equation*}
q(x) \eqdef \sum_{n \geq 0} \card{\Q_n} x^n =  \frac{4}{3} \frac{2\sqrt{1-12x}+1}{(\sqrt{1-12x}+1)^2} .
\end{equation*}
Singularity analysis gives the asymptotics of the number of truncated quadrangulations:
\begin{equation*}
\card{\Qtr_{n,p}} \usim n \infty \vk_p n^{-5/2} 12^n ,
\end{equation*}
with 
\begin{equation}
\label{Eq_expression_K}
\sum_{p\geq 1} \vk_p y^p = \frac{128 \sqrt 3}{\sqrt \pi} \frac{y}{\sqrt{(18-y)(2-y)^3}} 
\end{equation}
and
\begin{equation}
\label{Eq_expression_K1}
\vk_1 = \frac{32}{\sqrt{3\pi}} .
\end{equation}

We now proceed with the proof. 
Let $H$ be an admissible truncated $r$-hull with perimeter $q$, let $\FF$ be its skeleton. 
For every $e \in \FF^*$, let $c_e$ be the number of offspring of $e$ and let $M_e$ be the truncated quadrangulation filling the slot associated to $e$. 
From the proof of \cite[Lemma 6]{gall2017separating}:
\begin{equation}
\label{Eq_relation_offspring_numbers}
\sum_{e \in \FF^*} (c_e-1) = 1-q ,
\end{equation}
\begin{equation}
\label{Eq_relation_volume_skeldecomp}
|H| = q-1+\sum_{e \in \FF^*} (|M_e|-1).
\end{equation}
The convergence of finite quadrangulations towards the \UIPQ \cite[Theorem 1]{Krikun2008local}, together with \cite[(7)]{gall2017separating}, ensures that 
\begin{equation}
\label{Eq_brutal_law_tr_hull}
\P(\Htr_{Q_\infty}(r) = H) = \frac{\vk_q}{\vk_1} 12^{q-|H|-1} = \frac{\vk_q}{\vk_1} \prod_{e \in \FF^*} 12^{-|M_e|+1} .
\end{equation}
Let us introduce two variables $s$ and $t$. We will later choose their value in a suitable way. 
For every $s,t>0$, multiply \eqref{Eq_brutal_law_tr_hull} by $t^{q-1+\sum_{e \in \FF^*} (c_e-1)} s^{q-1+\sum_{e \in \FF^*} (|M_e|-1)} = s^{|H|}$ (by \eqref{Eq_relation_offspring_numbers} and \eqref{Eq_relation_volume_skeldecomp}):
\begin{align*}
s^{|H|}\P(\Htr_{Q_\infty}(r) = H) &=\frac{(st)^q \vk_q}{(st)^1 \vk_1} \prod_{e \in \FF^*} t^{c_e-1} \pfrac{s}{12}^{|M_e|-1} .
\end{align*}
Summing over all admissible truncated $r$-hulls, we get for every $s,t > 0$:
\begin{align}
\nonumber \E\l[ s^{|\Htr_{Q_\infty}(r)|} \r] &= \sum_{H} s^{|H|} \P(\Htr_{Q_\infty}(r) = H) \\
\nonumber &= \sum_{q>0} \frac{(st)^q \vk_q}{(st)^1 \vk_1} \sum_{\FF \in \F^o_{r,q}} \prod_{e \in \FF^*} t^{c_e-1} \sum_{|\partial M_e| = c_e+1} \pfrac{s}{12}^{|M_e|-1} \\
\label{Eq_Laplace_1}
&= \sum_{q>0} \frac{(st)^q \vk_q}{(st)^1 \vk_1} \sum_{\FF \in \F^o_{r,q}} \prod_{e \in \FF^*} \vT(c_e) ,
\end{align}
where $\vT(c) \eqdef t^{c-1} \sum_{|\partial M| = c+1} \pfrac{s}{12}^{|M|-1}$, where the sum in the definition of $\vT$ is over the set of all truncated quadrangulations with perimeter $c+1$. 
We recognize the generating series of truncated quadrangulations: 
\begin{equation*}
\sum_{|\partial M| = c+1} \pfrac{s}{12}^{|M|-1} = \frac{12}{s} [y^{c+1}] U\l(\frac{s}{12},y\r) .
\end{equation*}
The generating function of $\vT$ is thus 
\begin{equation*}
\Phi(u) \eqdef \sum_{c \geq 0} \vT(c) u^c = \frac{12}{st} \sum_{c \geq 0} (t u)^{c} [y^{c+1}] U\l(\frac{s}{12},y\r) = \frac{12}{s t^2 u} \l(U\l(\frac{s}{12}, t u\r) - U\l(\frac{s}{12},0\r) \r).
\end{equation*}
We claim that if $s = 1-p^2$ and $t=2/(1+p)$ for some $p \in [0,1)$, then $\vT$ defines a probability distribution on nonnegative integers. Indeed, using \eqref{Eq_expression_U} we can rewrite $\Phi$ as follows:
\begin{multline}
\label{Eq_phi}
\Phi(u) = \frac{1}{2u(1-p)}\Big( u^2 p-u^2-3p+6u-3 \\
+ (1-u)\sqrt{p^2u^2+2p^2u-2pu^2+9p^2+8pu+u^2+18p-10u+9} \Big) .
\end{multline}
Our claim is equivalent to $\Phi(1)=1$, which is immediate.

Let $\F_{r,q}$ be the set of all planar forests satisfying conditions (i) and (ii) of the definition of admissible forests. Every forest $\FF \in \F^o_{r,q}$ gives rise to $q$ different forests of $\F_{r,q}$ by applying one of the $q$ circular permutations of trees of $\FF$. 
For every $\FF_\bullet \in \F_{r,q}$, the probability that a \BGW forest with offspring distribution $\vT$ is equal to $\FF_\bullet$ is $\prod_{e \in \FF_\bullet^*} \vT(c_e)$. It follows that 
$$q\sum_{\FF \in \F^o_{r,q}} \prod_{e \in \FF^*} \vT(c_e) = \sum_{\FF_\bullet \in \F_{r,q}} \prod_{e \in \FF_\bullet^*} \vT(c_e) = \P(Y_r = 1) = [u] \l( \Phi^{(r)}(u) \r)^q , $$
where $(Y_r)_{r \geq 0}$ is a \BGW process with offspring distribution $\vT$ such that $Y_0 = q$ and $\Phi^{(r)}$ denotes the $r$-th iterate of $\Phi$.  
Let us rewrite \eqref{Eq_Laplace_1} with our special choice of $s$ and $t$:
\begin{align*}
\E\l[ (1-p^2)^{|\Htr_{Q_\infty}(r)|} \r] &= \frac{1}{2(1-p) \vk_1} \sum_{q \geq 1} \frac{1}{q} \sum_{\FF_\bullet \in \F_{r,q}} (2(1-p))^q \vk_q \prod_{e \in \FF_\bullet^*} \vT(c_e) \\
&= \frac{1}{2(1-p) \vk_1} \sum_{q \geq 1} \frac{1}{q} (2(1-p))^q \vk_q [u] \l( \Phi^{(r)}(u) \r)^q \\
&= \frac{1}{\vk_1} \l(\Phi^{(r)}\r)'(0) \sum_{q \geq 1} \vk_q \l(2(1-p) \Phi^{(r)}(0) \r)^{q-1} \\
&= \l(\Phi^{(r)}\r)'(0) K(2(1-p) \Phi^{(r)}(0)) ,
\end{align*}
with 
$$K(x) = \frac{1}{\vk_1} \sum_{k \geq 1} \vk_k x^{k-1} = 12 \cdot (18-x)^{-1/2} (2-x)^{-3/2} $$
by \eqref{Eq_expression_K} and \eqref{Eq_expression_K1}. 
Let $\KK(t) = \frac{3}{4}\sqrt{\frac{8+t}{t}}$, then $K(x) = (2\KK(1-x/2))' = - \KK'(1-x/2)$, thus
\begin{align}
\E\l[ (1-p^2)^{|\Htr_{Q_\infty}(r)|} \r] &= - \l(\Phi^{(r)}\r)'(0) \KK'( 1-(1-p) \Phi^{(r)}(0)) \nonumber\\
\label{Eq_Laplace_2}
&= (1-p)^{-1} \psi_r'(0) \KK'(\psi_r(0))  \\
\nonumber &= (1-p)^{-1} \l(\KK \circ \psi_r\r)'(0) ,
\end{align}
where $\psi_r(u) = 1-(1-p)\Phi^{(r)}(u)$. 
The closed formula for $\psi_r$ is provided by the next Lemma. This finishes the proof of the Theorem.
\end{proof}

\begin{lemma}
\label{Lemma_computation_of_phi}
Let $\Phi^{(r)} = \Phi \circ ... \circ \Phi$ be the $r^\text{th}$ iterate of $\Phi$. 
The following holds for every $r \geq 0$, for $u\in[0,1)$ and $p \in (0,1)$:
\begin{equation*}
\Phi^{(r)}(u) = 1 - \frac{6p}{(1-p)\l( -1+\sqrt{\frac{2(1-p)}{p+2}} \cosh(B(u)+ry) \r)}
\end{equation*}
with $y$ and $B(u)$ as in \thref{Th_Laplace}. Furthermore, \eqref{Eq_expression_psi} holds.
\end{lemma}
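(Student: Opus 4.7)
The strategy is to find a change of coordinates under which the highly nonlinear iteration of $\Phi$ becomes a mere translation. Namely, I will show that $B$ is a conjugating coordinate: $B\circ\Phi = B+y$, so that by induction on $r$, $B\circ\Phi^{(r)} = B+ry$, and the claim follows by inverting $B$.

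First, I invert the proposed coordinate. Setting $c = \sqrt{(p+2)/(2(1-p))}$ and $w(u) = 6p/((1-u)(1-p))$, the definition of $B$ reads $\cosh(B(u)) = c(1+w(u))$. Solving $\cosh(v) = c(1+w)$ for $u$ gives
\begin{equation*}
B^{-1}(v) \;=\; 1 - \frac{6p}{(1-p)\bigl(\sqrt{2(1-p)/(p+2)}\,\cosh(v) - 1\bigr)},
\end{equation*}
which is exactly the right-hand side of the target formula when $v = B(u)+ry$. Thus once $B\circ\Phi = B+y$ is established, an immediate induction yields $\Phi^{(r)}(u) = B^{-1}(B(u)+ry)$, and the formula \eqref{Eq_expression_psi} for $\psi_r$ follows by direct substitution into $\psi_r(u) = 1-(1-p)\Phi^{(r)}(u)$, the constants rearranging as
\begin{equation*}
\psi_r(u) \;=\; p + \frac{6p}{-1+\sqrt{2(1-p)/(p+2)}\,\cosh(B(u)+ry)}.
\end{equation*}

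The bulk of the work is the base case $B(\Phi(u)) = B(u)+y$. Using the addition formula and the explicit values $\cosh y = (2p+1)/(1-p)$ and $\sinh y = \sqrt{3p(p+2)}/(1-p)$ (the latter following from $\cosh^2y-1 = [(2p+1)^2-(1-p)^2]/(1-p)^2 = 3p(p+2)/(1-p)^2$), and multiplying through by $1/c$, the claim is equivalent to
\begin{equation*}
1 + \frac{6p}{(1-\Phi(u))(1-p)} \;=\; \Bigl[1+\frac{6p}{(1-u)(1-p)}\Bigr]\frac{2p+1}{1-p} + \sqrt{\frac{6p}{1-p}}\,\sinh(B(u)),
\end{equation*}
where $\sinh(B(u)) = \sqrt{c^2(1+w(u))^2 - 1}$. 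I plan to write $1-\Phi(u)$ from \eqref{Eq_phi} in the form $[P(u)-(1-u)\sqrt{D(u)}]/(2u(1-p))$ with $P(u) = (1-p)u^2 - 2(p+2)u + 3(p+1)$ and $D$ the quadratic under the square root in \eqref{Eq_phi}, which factors as $D(u) = (1-p)^2u^2 - 2(1-p)(p+5)u + 9(p+1)^2$. Introducing $w = 6p/((1-u)(1-p))$ as a common working variable, the identity becomes a relation between two quantities each involving one square root, which I reduce to a polynomial identity in $(u,p)$ by isolating the square roots and squaring. The sign ambiguities ($\sinh B\ge 0$ and the branch of $\sqrt{D(u)}$ that makes $\Phi$ a probability generating function) are fixed by checking the identity at a single value, e.g.\ $u=0$, where one computes $\Phi(0) = 2(2p+1)/(3(p+1))$ from \eqref{Eq_phi} and checks it matches $B^{-1}(B(0)+y)$.

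The main obstacle is this algebraic verification: although the underlying identity is a single functional equation, it involves composing a square root with a hyperbolic cosine and then squaring twice, which produces a polynomial identity of non-trivial degree in $u$ and $p$. Using the factorisation $c^2(1+w)^2 - 1 = c^2(w+1-1/c)(w+1+1/c)$ (so that $\sqrt{6p/(1-p)}\,\sinh(B(u)) = \sqrt{6p/(1-p)}\cdot c\sqrt{(w+1-1/c)(w+1+1/c)}$) and treating $w$ rather than $u$ as the natural variable is the most economical route; everything else is bookkeeping on polynomial expansions, and the final check reduces to an identity that can be certified by a symbolic algebra routine or by careful comparison of coefficients.
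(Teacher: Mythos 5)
Your proposal is correct and is essentially the paper's argument in different clothing: the paper introduces $T_r(u)=2/(1-\Phi^{(r)}(u))$, derives the explicit recursion $T_{r+1}=g(T_r)$ from \eqref{Eq_phi}, and proves by induction that $T_r(u)=\frac{1-p}{3p}\bigl(-1+\sqrt{\tfrac{2(1-p)}{p+2}}\cosh(B(u)+ry)\bigr)$ using the $\cosh$/$\sinh$ addition formulas — which is exactly your statement that $B$ conjugates $\Phi$ to translation by $y$, since $\cosh(B(\Phi^{(r)}(u)))=c\bigl(1+\tfrac{3p}{1-p}T_r(u)\bigr)$. Your single-root-squaring verification of $B\circ\Phi=B+y$ (with the sign fixed at $u=0$ via $\Phi(0)=2(2p+1)/(3(p+1))$) is the same computation as the paper's two auxiliary identities in the inductive step, so the two proofs coincide in substance.
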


\begin{proof}

Let $T_r(u)= \frac{2}{1-\Phi^{(r)}(u)}$. 
Writing $g(t) \eqdef \frac{2}{1-\Phi\l( 1-2/t \r)}$, we check that for every integer $r\geq 0$, $T_{r+1}(u) = g(T_r(u))$. Let us compute a more explicit expression for $g$: using \eqref{Eq_phi}, 
\begin{multline*}
\Phi(1-2/t) = \frac{1}{t(t-2)(1-p)}\Big( t^2 -pt^2 -2pt +2p -4t -2
\\
+ 2\sqrt{3p(p+2)t^2 + 2(1-p)(p+2)t + (1-p)^2} \Big) .
\end{multline*}
Then
\begin{align*}
g(t) &= \frac{2t(t-2)(1-p)\l( 2t+4pt-2p+2 + 2\sqrt{3p(p+2)t^2 + 2(1-p)(p+2)t + (1-p)^2} \r)}{4(1-p)^2 t (t-2)} \\
&= \frac{(2p+1)t+(1-p)+ \sqrt{3p(p+2)t^2 + 2(1-p)(p+2)t + (1-p)^2}}{1-p} .
\end{align*}
In summary, for every $r\geq 0$
\begin{equation}
\label{Eq_recurrence_initiale_sur_T}
T_{r+1}(u) = \frac{(2p+1)T_r(u)+(1-p)+\sqrt{3p(p+2)T_r(u)^2 + 2(1-p)(p+2)T_r(u) + (1-p)^2}}{1-p} .
\end{equation}
We prove by induction that for every $r\geq 0$, for every $p \in (0,1)$ and $u \in [0,1)$,
\begin{equation}
\label{Eq_rec_T}
T_r(u) = \frac{1-p}{3p}\l( -1+\sqrt{\frac{2(1-p)}{p+2}} \cosh(B(u)+ry) \r) .
\end{equation}
For $r=0$, 
\begin{align*}
\frac{1-p}{3p}\l( -1+\sqrt{\frac{2(1-p)}{p+2}} \cosh(B(u)) \r) &= \frac{1-p}{3p}\l( -1 + 1 + \frac{6p}{(1-u)(1-p)} \r) \\
&= \frac{2}{1-u} \\
&= T_0(u) .
\end{align*}
Let $r\geq 0$, and assume that \eqref{Eq_rec_T} holds. Substituting in \eqref{Eq_recurrence_initiale_sur_T} gives
\begin{equation}
\label{Eq_rerec sur T}
T_{r+1}(u) = \cosh(y)T_r(u) + 1 + \sqrt{1+\frac{2(p+2)}{1-p}T_r(u) + \sinh(y)^2 T_r(u)^2 }  
\end{equation}
One checks that 
\begin{align*}
\sqrt{1+\frac{2(p+2)}{1-p}T_r(u) + \sinh(y)^2 T_r(u)^2} &=  \frac{1-p}{3p} \sqrt{\frac{2(1-p)}{p+2}} \sinh(y) \sinh(B(u)+ry) , \\
\cosh(y) T_r(u) +1 &= \frac{1-p}{3p} \sqrt{\frac{2(1-p)}{p+2}} \cosh(y) \cosh(B(u)+ry) - \frac{1-p}{3p} .
\end{align*}
Substituting in \eqref{Eq_rerec sur T} gives
\begin{align*}
T_{r+1}(u) &= -\frac{1-p}{3p} +  \frac{1-p}{3p} \sqrt{\frac{2(1-p)}{p+2}} \cosh(B(u)+(r+1)y). 
\end{align*}
This proves \eqref{Eq_rec_T}. A straightforward rewriting then gives the expressions in \lemref{Lemma_computation_of_phi}. 

\end{proof}

\subsection{Tail estimates for the volume of truncated hulls}
\label{Sec_tail_volume}

The goal of this section is to get a Taylor expansion of $\E\l[ e^{-\vl|\Htr_{Q_\infty}(r)|} \r]$, with a remainder term that we control simultaneously for every $r \geq 0$.

\begin{lemma}
\label{Lemma_dl_Laplace}
There exists $\vl_0,c >0$ such that for every $r>0$, for every $0\leq \vl \leq \vl_0$:
\begin{multline}
\label{Eq_dl_of_Laplace}
\Bigg| \E\l[ e^{-\vl |\Htr_{Q_\infty}(r)|/r^4} \r] - \Bigg( 1- \frac{(r+3)(6r^4+36r^3+87r^2+99r+44)}{4r^3(2r+3)^2} \vl \\
 + \frac{(r+3)(r+1)^3(r+2)^3}{2r^5(2r+3)^2} \vl^{3/2} \Bigg) \Bigg| \leq c \vl^2 .
\end{multline}
\end{lemma}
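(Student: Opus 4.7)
The plan is to apply Theorem \ref{Th_Laplace} with the substitution $p = p(\lambda,r) \eqdef \sqrt{1-e^{-\lambda/r^4}}$, which converts the LHS of \eqref{Eq_Laplace_transform_of_truncated_hull_volume} into $\E[e^{-\lambda|\Htr_{Q_\infty}(r)|/r^4}]$. I expand the RHS as a series in $p$ around $p=0$, then substitute $p = (\sqrt\lambda/r^2)(1 - \lambda/(4r^4) + O(\lambda^2/r^8))$ and regroup by powers of $\lambda$; the non-trivial $\lambda^{3/2}$ term will arise from a non-vanishing $p^3$ coefficient (reflecting the heavy $t^{-3/2}$ tail of $|\Htr_{Q_\infty}(r)|$).

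First I would expand the ingredients. From $\cosh(y) = (1+2p)/(1-p)$ one gets $y^2 = 6p + 3p^2 + (12/5)p^3 + O(p^4)$, and similarly from the expression for $B(0)$ that $B(0)^2 = 27p/2 + (27/4)p^2 + O(p^3)$. The auxiliary quantity $A \eqdef \sqrt{2(1-p)/(2+p)}$ gives $A = 1 - 3p/4 + O(p^2)$, and $B'(0) = 2\sqrt{p(p+2)}/(\sqrt 3(1+p))$ follows by direct differentiation. Although $y$ and $B(0)$ individually are $\sqrt p$ times an analytic function, the combinations $s^2 \eqdef (B(0)+ry)^2$, $\cosh(s)$, and $\sinh(s)\cdot B'(0)$ are all analytic in $p$ (with coefficients that are polynomials in $r$). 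At leading order $s^2 = (3/2)(2r+3)^2 p + O(p^2)$, and using the identity $(2r+3)^2 - 1 = 4(r+1)(r+2)$ one finds $A\cosh(s) - 1 = 3p(r+1)(r+2) + O(p^2)$ and $A\sinh(s) B'(0) = 2(2r+3) p + O(p^2)$. Consequently $\psi_r(0) \to 2/((r+1)(r+2))$ as $p \to 0$, a nonzero limit, and since $\KK$ and $\KK'(t) = -3 t^{-3/2}(t+8)^{-1/2}$ are analytic there, one gets $\KK(\psi_r(0))|_{p=0} = 3(2r+3)/4$ and $\KK'(\psi_r(0))\psi_r'(0)|_{p=0} = 1$, which confirms the normalization $\E[e^0] = 1$.

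The next orders yield the claimed coefficients. Pushing each expansion to sufficient precision produces $f(p,r) \eqdef (1-p)^{-1}(\KK\circ\psi_r)'(0) = 1 + f_2(r) p^2 + f_3(r) p^3 + f_4(r) p^4 + O(p^5)$; the vanishing of the $p^1$ coefficient emerges from a cancellation between the linear term of $(1-p)^{-1}$ and that of $(\KK\circ\psi_r)'(0)$, which is forced by the finiteness of $\E[|\Htr_{Q_\infty}(r)|]$. Substituting $p(\lambda, r)$ one obtains $\E[e^{-\lambda|\Htr_{Q_\infty}(r)|/r^4}] = 1 + (f_2(r)/r^4)\lambda + (f_3(r)/r^6) \lambda^{3/2} + O(\lambda^2)$, and matching with the claim identifies $f_2(r)/r^4 = -\frac{(r+3)(6r^4+36r^3+87r^2+99r+44)}{4r^3(2r+3)^2}$ and $f_3(r)/r^6 = \frac{(r+3)(r+1)^3(r+2)^3}{2r^5(2r+3)^2}$.

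The main obstacle is a combination of algebraic bookkeeping and uniform control. The exact polynomial forms of $c_1(r)$ and $c_2(r)$ must emerge from several layers of nested Taylor expansions: the $p$-expansions of $y^2, B(0)^2, A, B'(0)$ to order $p^3$, the substitution into $s^2$ and $\cosh(s)$, and the chain rule for $\KK'(\psi_r(0))\psi_r'(0)$. Verifying that the resulting rational functions of $r$ simplify exactly to the stated factorizations is lengthy but purely mechanical. For the uniformity in $r$ of the $O(\lambda^2)$ remainder, the key observation is that at each order $p^k$ in the expansion of $f$, the coefficient is a rational function of $r$ whose growth is at most $r^{2k}$, since $r$ enters only via the product $ry$ and each factor of $ry$ carries a $\sqrt p$. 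Consequently, under the scaling $p \sim \sqrt\lambda/r^2$, every tail term $f_k(r) p^k$ is $O(\lambda^{k/2})$ uniformly in $r$, and the contributions from $k \geq 4$ sum to $O(\lambda^2)$ for $\lambda$ small enough, yielding the uniform bound $c\vl^2$.
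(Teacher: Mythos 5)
Your proposal is correct and follows essentially the same route as the paper: both apply Theorem \ref{Th_Laplace} with $1-p^2=e^{-\vl/r^4}$ (equivalently $p\sim\sqrt\vl/r^2$), Taylor-expand $\psi_r(0)$ and $\psi_r'(0)$ near $p=0$ under this scaling, read off the $\vl$ and $\vl^{3/2}$ coefficients from the $p^2$ and $p^3$ terms, and control the remainder uniformly in $r$ by working in the bounded variable $x=pr^2$ (the paper normalizes $T_r(0)$ by $r/((r+1)(r+2)(r+3))$ to the same effect, deferring the algebra to a Maple sheet). Your intermediate expansions ($y^2=6p+3p^2+\tfrac{12}{5}p^3+O(p^4)$, $B(0)^2=\tfrac{27}{2}p+\tfrac{27}{4}p^2+O(p^3)$, $B'(0)=2\sqrt{p(p+2)}/(\sqrt3(1+p))$, $s^2=\tfrac32(2r+3)^2p+O(p^2)$, $\psi_r(0)\to 2/((r+1)(r+2))$) all check out.
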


\begin{proof}
The Proposition is proved by computing Taylor expansions of $\psi_r'(0)$ and of $\psi_r(0)$ near $p=0$, and then using \eqref{Eq_Laplace_2}. 
For readability we omitted the detailed computations of the coefficients; a maple sheet is provided in Annex. 
By a careful development in $p$ of each individual term in $T_r(0)$, one can find $C_1>0$ and $x_1>0$ (that do not depend on $r$) such that for every $r>0$ and $p = xr^{-2}$ with $0< x < x_1$,
\begin{multline*}
\Bigg|\frac{r}{(r+1)(r+2)(r+3)} T_r(0) \\ - \Bigg( \frac{r}{r+3} + \frac{1}{2}x + \frac{r^2+3r+1}{10r^2} x^2 + \frac{3r^4+18r^3+41r^2+42r+36}{280r^4} x^3 \Bigg) \Bigg| \leq C_1 x^4 ,
\end{multline*}
Given that $\psi_r(u) = 1-(1-p)\l(1-\frac{2}{T_r(u)}\r) = p + \frac{2(1-p)}{T_r(u)}$, similar expansions hold for $\psi_r(0)$, as well as for $T_r'(0)$ and $\psi'_r(0)$. We then compute the expansion of 
\begin{equation*}
\E\l[ (1-p^2)^{|\Htr_{Q_\infty}(r)|} \r] = -\frac{3}{1-p} \frac{\psi_r' (0)}{(8+\psi_r(0))^{1/2} \psi_r (0)^{3/2}} 
\end{equation*}
given by \eqref{Eq_Laplace_2}: there exists $x_2, C_2>0$ such that for every $r>0$ and $0<x<x_2$,
\begin{multline*}
\Bigg| \E\l[ (1-x^2 r^{-4})^{|\Htr_{Q_\infty}(r)|} \r] - \Bigg( 1- \frac{(r+3)(6r^4+36r^3+87r^2+99r+44)}{4r^3(2r+3)^2} x^2 \\
 + \frac{(r+3)(r+1)^3(r+2)^3}{2r^5(2r+3)^2} x^3 \Bigg) \Bigg| \leq C_2 x^4 .
\end{multline*}
The Proposition follows by choosing $x$ such that $e^{-\vl/r^4} = 1-x^2 r^{-4}$.
\end{proof}

We read from \eqref{Eq_dl_of_Laplace} that
\begin{equation*}
\E\l[ |\Htr_{Q_\infty}(r)| \r] = \frac{r(r+3)(6r^4+36r^3+87r^2+99r+44)}{4(2r+3)^2} \usim r \infty \frac{3}{8} r^4 ,
\end{equation*}
which is consistent with the formula for the mean volume of the Brownian plane hull, which can be derived from \cite[Theorem 1.4]{curien2016hull}. 
\lemref{Lemma_dl_Laplace}, together with \cite[Theorem 8.1.6]{bingham1989regular}, implies that for every $r>0$, 
\begin{align}
\label{Eq_queue_distr_volume}
\P\l( |\Htr_{Q_\infty}(r)| > t \r) &\usim t \infty \frac{r(r+3)(r+1)^3(r+2)^3}{4 \sqrt \pi(2r+3)^2} t^{-3/2} .
\end{align}
Since we need a non-asymptotic upper bound valid for all $r>0$ and $t>0$, we prove the following weaker result:
\begin{corollary}
\label{Cor_moments_of_hull_volume}
For every $\vd>0$, there exists $C(\vd) \in (0,\infty)$ such that for every $r>0$, for every $t>0$,
\begin{equation*}
\P\l( |\Hull(r)| > t r^4 \r) \leq C(\vd) t^{-3/2+\vd} .
\end{equation*}
\end{corollary}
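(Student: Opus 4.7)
Write $Y_r \eqdef |\Htr_{Q_\infty}(r)|/r^4$ and, restating \lemref{Lemma_dl_Laplace},
\begin{equation*}
\E[e^{-\vl Y_r}] = 1 - a_r\vl + b_r\vl^{3/2} + R_r(\vl), \qquad |R_r(\vl)| \leq c\vl^2 \text{ for } \vl \in [0,\vl_0],
\end{equation*}
with $a_r, b_r$ the explicit coefficients of \eqref{Eq_dl_of_Laplace}. A direct inspection of those rational functions shows that $a_r \to 3/8$ and $b_r \to 1/8$ as $r \to \infty$, and that both are bounded uniformly in $r\geq 1$. The strategy is to apply a second-difference Markov argument to turn this expansion into a $t^{-3/2}$ tail bound for $Y_r$, and then transfer the bound from truncated to standard hulls via the inclusion \eqref{Eq_inclusion_hulls}.

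The crucial algebraic observation is that squaring $1-e^{-\vl Y_r}$ kills the linear term in the Laplace expansion. Expanding
\begin{equation*}
\E\bigl[(1-e^{-\vl Y_r})^2\bigr] = 1 - 2\E[e^{-\vl Y_r}] + \E[e^{-2\vl Y_r}] = (2\sqrt{2}-2)\, b_r\, \vl^{3/2} + R_r(2\vl) - 2 R_r(\vl),
\end{equation*}
the uniform bound on $b_r$ and the quadratic control of the remainder yield $\E[(1-e^{-\vl Y_r})^2] \leq C\vl^{3/2}$ uniformly in $r\geq 1$, for every $\vl$ smaller than some $\vl_1>0$. On the other hand, the pointwise inequality $(1-e^{-\vl Y_r})^2 \geq (1-e^{-1})^2 \ind{Y_r \geq 1/\vl}$ gives
\begin{equation*}
\P(Y_r \geq 1/\vl) \leq (1-e^{-1})^{-2} C\vl^{3/2}.
\end{equation*}
Setting $t = 1/\vl$ yields $\P(Y_r \geq t) \leq C'\, t^{-3/2}$ for every $t \geq 1/\vl_1$; for smaller $t$ one uses the trivial bound $\P \leq 1 \leq (1/\vl_1)^{3/2} t^{-3/2}$ and enlarges the constant. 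Hence $\P(|\Htr_{Q_\infty}(r)| > tr^4) \leq C''\, t^{-3/2}$ uniformly in $r\geq 1$ and $t>0$.

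Finally, the inclusion $\Hull_{Q_\infty}(r) \subset \Htr_{Q_\infty}(r+1)$ from \eqref{Eq_inclusion_hulls} allows us to conclude: for every $r\geq 1$ and $t>0$,
\begin{align*}
\P\bigl(|\Hull_{Q_\infty}(r)| > tr^4\bigr) &\leq \P\bigl(|\Htr_{Q_\infty}(r+1)| > tr^4\bigr) \\
&= \P\!\l( Y_{r+1} > t \pfrac{r}{r+1}^{4} \r) \leq C'' \pfrac{r+1}{r}^{6} t^{-3/2} \leq 64\, C''\, t^{-3/2},
\end{align*}
which is even stronger than the statement of the corollary (any $\vd>0$ is then free).

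The only technical point, which is not really a genuine obstacle, is to confirm that the coefficients $a_r, b_r$ and the remainder $R_r$ are bounded uniformly in $r$; this is already built into \lemref{Lemma_dl_Laplace} and a direct check of the explicit rational expressions. The slack $\vd>0$ in the statement of the corollary is a cushion that this approach does not in fact require, but it would allow us to absorb any polynomial-in-$r$ correction at the transfer step from truncated to standard hulls if one preferred a less quantitative handling of the inclusion.
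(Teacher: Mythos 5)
Your argument is correct, and it takes a genuinely different route from the paper. The paper's proof runs through a Mellin--Fubini identity, $\int_0^\infty \vl^{-5/2+\vd}\,\E[e^{-\vl X_r}-1+\vl X_r]\,\ud\vl = \E[X_r^{3/2-\vd}]\int_0^\infty u^{-5/2+\vd}(e^{-u}+u-1)\,\ud u$, bounds the left-hand side by splitting at $\vl_0$ and invoking Lemma \ref{Lemma_dl_Laplace} near the origin, and concludes by Markov's inequality applied to the $(3/2-\vd)$-moment. That method cannot reach $\vd=0$: the term $\int_0^{\vl_0}\vl^{-5/2+\vd}\cdot b_r\vl^{3/2}\,\ud\vl$ diverges at $\vd=0$, consistently with the fact that $\E[X_r^{3/2}]=\infty$ given the exact tail \eqref{Eq_queue_distr_volume}. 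Your second-difference trick --- evaluating the Laplace transform at $\vl$ and $2\vl$ so that the linear coefficient $a_r$ cancels identically, then using $(1-e^{-\vl Y_r})^2\geq(1-e^{-1})^2\ind{Y_r\geq 1/\vl}$ --- bypasses the moment computation entirely and yields the sharp uniform bound $C t^{-3/2}$, which is strictly stronger than the corollary (and matches the $r$-uniform boundedness of the constant in \eqref{Eq_queue_distr_volume}, since $\sup_{r\geq1} b_r<\infty$ and $b_r\to 1/8$). Both proofs consume exactly the same inputs, namely the $r$-uniform remainder in Lemma \ref{Lemma_dl_Laplace} and the inclusion \eqref{Eq_inclusion_hulls}; your handling of both is correct (take $\vl_1=\vl_0/2$ so that $R_r(2\vl)$ is controlled, and note $((r+1)/r)^6\leq 64$ for integer $r\geq1$). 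The only thing the paper's route buys in exchange for the $\vd$-loss is the uniform moment bound $\E[X_r^{3/2-\vd}]\leq C(\vd)$ as an intermediate statement, but your tail estimate implies that too, so your proof is a clean improvement.
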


\begin{proof}
Fix $r>0$, and write $X_r = \frac{|\Htr(r)|}{r^4}$. 
By Fubini's theorem,
\begin{equation}
\label{Eq_Fubini_Laplace_moments}
\int_0^\infty \vl^{-5/2+\vd} \E\l[ e^{-\vl X_r} -1 + \vl X_r \r] \ud \vl = \E\l[ X_r^{3/2-\vd} \r] \int_{0}^\infty u^{-5/2+\vd} \l(e^{-u}+u-1\r) \ud u .
\end{equation}
Split the left-hand side integral at $\vl_0$: 
\begin{align}
\int_0^\infty \vl^{-5/2+\vd} \E\l[ e^{-\vl X_r} -1 + \vl X_r \r] \ud \vl
\label{Eq_borne_pres_0}
\leq& \int_0^{\vl_0} \vl^{-5/2+\vd} \E\l[ e^{-\vl X_r} -1 + \vl X_r \r] \ud \vl \\
\label{Eq_borne_loin_0}
&+ \E\l[ X_r \r] \int_{\vl_0}^\infty \vl^{-3/2+\vd} \ud \vl .
\end{align}
\eqref{Eq_borne_loin_0} is bounded by a constant that does not depend on $r$.  On the other hand, it follows from \lemref{Lemma_dl_Laplace} that for every $\vl \leq \vl_0$, 
\begin{equation*}
\E\l[ e^{-\vl X_r} -1 + \vl X_r \r] \leq \frac{(r+3)(r+1)^3(r+2)^3}{2r^5(2r+3)^2} \vl^{3/2} + c \vl^2 ,
\end{equation*} 
thus
\begin{align*}
&\int_0^{\vl_0} \vl^{-5/2+\vd} \E\l[ e^{-\vl X_r} -1 +\vl X_r \r] \ud \vl \\
&\leq \int_0^{\vl_0} c \vl^{-1/2+\vd} \ud \vl  + \l(\sup_{r\geq 1} \frac{(r+3)(r+1)^3(r+2)^3}{2r^5(2r+3)^2} \r) \int_0^{\vl_0} \vl^{-1+\vd} \ud \vl ,
\end{align*}
so \eqref{Eq_borne_pres_0} is bounded by a constant that depends only on $\vd$.
We then have by \eqref{Eq_Fubini_Laplace_moments}:
\begin{equation*}
\E\l[ X_r^{3/2-\vd} \r]  \leq \frac{\int_0^\infty \vl^{-5/2+\vd} \E\l[ e^{-\vl X_r} -1+ \vl X_r \r] \ud \vl}{\int_{0}^\infty u^{-5/2+\vd} \l(e^{-u}+u-1\r) \ud u}
\end{equation*}
which is smaller than a constant that depends only on $\vd$. The Lemma follows using Markov's inequality, and recalling that $|\Hull(r)| \leq |\Htr(r+1)|$.
\end{proof}

\section{Coverings of finite quadrangulations by balls}
\label{Section_Covering}

We start by proving that with high probability, we can cover the quadrangulation $Q_n$ with balls of volume uniformly bounded from below, at every scale at the same time. 

\begin{lemma}
\label{Lemma covering of Qn by balls}
Let $\ve>0$. For every integer $R$ with $n^\ve \leq R \leq n^{1/4}$, we can find a sequence $(e^R_i)_{0\leq i < R^{-4}n^{1+\ve}}$ of oriented edges of $Q_n$, such that $Q_n$ re-rooted at $e^R_i$ has the same law as $Q_n$, and the following holds with probability going to $1$ as $n\to\infty$: 

For every integer $R$ such that $n^\ve \leq R \leq n^{1/4}$, if $z^R_i$ denotes the tail vertex of $e^R_i$,
\begin{enumerate}
\item the $R$-balls $B_{Q_n}(z^R_i,R)$ cover $Q_n$,
\item for every $i$ with $0\leq i < R^{-4}n^{1+\ve}$, the ball $B_{Q_n}(z^R_i,R)$ contains at least $\frac{R^4}{n^{\ve}}$ vertices.
\end{enumerate}
\end{lemma}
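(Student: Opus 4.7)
The plan is to pick the edges $(e_i^R)$ independently and uniformly at random from the set of oriented edges of $Q_n$. Since $Q_n$ is uniform on $\Q_n$, re-rooting $Q_n$ at any such $e_i^R$ preserves its law, so the root-invariance property required by the lemma is automatic. It then remains to show that properties (1) and (2) hold simultaneously with probability tending to $1$. Restricting to the $O(\log n)$ dyadic scales $R \in \{2^k : n^\ve \leq 2^k \leq n^{1/4}\}$ and using the monotonicity of $R \mapsto B_{Q_n}(z, R)$ to propagate to intermediate scales (at the cost of mildly adjusting $\ve$), it is enough to verify the two properties at each dyadic $R$ with probability $1 - o(1/\log n)$.

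For property (2), re-rooting invariance gives $\P(|V(B_{Q_n}(z_i^R,R))| < R^4/n^\ve) = \P(|V(B_{Q_n}(\rho_n,R))| < R^4/n^\ve)$, and a union bound over the $N_R \leq n^{1-3\ve}$ samples reduces the task to a uniform polynomial lower tail estimate of the form $\P(|V(B_{Q_n}(\rho_n,R))| < R^4/n^\ve) \leq n^{-2}$. Lemma~\ref{Lemma_density_hulls_UIPQ_Qn} transfers this to the \UIPQ\ up to a bounded multiplicative factor -- once one discards the negligible event that the hull has $\geq n/2$ faces, controlled by Corollary~\ref{Cor_moments_of_hull_volume}. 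In the \UIPQ\ the desired lower tail follows from the exact Laplace transform of Theorem~\ref{Th_Laplace} via Chernoff's inequality $\P(|\Htr_{Q_\infty}(R)| \leq s) \leq e^{\lambda s} \E[e^{-\lambda |\Htr_{Q_\infty}(R)|}]$, applied at values of $\lambda$ outside the Taylor regime of Lemma~\ref{Lemma_dl_Laplace}.

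For property (1), it is sufficient that every vertex $v \in V(Q_n)$ lies within graph distance $R-1$ from some $z_i^R$, as then every face (having such a vertex) is contained in some $B_{Q_n}(z_i^R, R)$. Given $Q_n$, the probability this fails at $v$ is $(1 - \alpha(v))^{N_R} \leq e^{-N_R \alpha(v)}$, with $\alpha(v) := \sum_{u:\, d(u,v) \leq R-1}\deg_{Q_n}(u)/(4n)$. Summing over $v$ and using the re-rooting identity $\E[\sum_v \deg_{Q_n}(v)\, g(Q_n, v)] = 4n\, \E[g(Q_n, \rho_n)]$ (valid for any non-negative re-rooting-invariant function $g$, obtained by re-rooting at a uniform oriented edge and using $\sum_v \deg_{Q_n}(v) = 4n$), one bounds the expected number of uncovered vertices by $4n\, \E[e^{-c n^\ve X}]$ with $X := |V(B_{Q_n}(\rho_n, R-1))|/R^4$. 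Splitting this exponential moment at $\delta = C(\log n)/n^\ve$ gives $\E[e^{-c n^\ve X}] \leq \P(X < \delta) + e^{-c n^\ve \delta}$; the first term is controlled by the same lower tail estimate as in (2), and the second is $n^{-C}$. Choosing $C$ large enough yields $\E[\#\text{uncovered}] = o(1)$, and property (1) follows from Markov's inequality.

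The main obstacle is thus the uniform polynomial lower tail bound $\P(|V(B_{Q_n}(\rho_n,R))| < R^4/n^\ve) \leq n^{-2}$. While the upper tail on the hull volume is readily available via Corollary~\ref{Cor_moments_of_hull_volume}, such a lower tail requires exploiting the exact formula in Theorem~\ref{Th_Laplace} for parameter values $p$ not close to $0$ (equivalently, at Chernoff parameters $\lambda \gg 1$), a regime not directly covered by the Taylor expansion of Lemma~\ref{Lemma_dl_Laplace}.
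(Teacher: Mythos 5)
Your strategy---sample the re-rooting edges i.i.d.\ uniformly among oriented edges and deduce both the covering and the volume lower bound from a quantitative lower tail on the volume of a ball around the root---is genuinely different from the paper's. The paper constructs $(e^R_i)$ deterministically from the Cori--Vauquelin--Schaeffer encoding: the covering follows from a H\"older estimate on the label process combined with the bound $\dgr^{Q_n}(v_i,v_j)\le L_i+L_j-2\min_{k\in[i,j]}L_k+2$, the re-rooting invariance comes from re-rooting the tree at the corner $c_{k(R)i}$, and the volume lower bound comes from counting the $k(R)$ corners between consecutive sample points and dividing by the maximal degree $\ln n$ (Lemma \ref{Lemma_max_deg_Qn}). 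That route needs no lower-tail volume estimate at all, which is exactly where your argument does not close.

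There are two concrete gaps. First, the estimate $\P\l(|V(B_{Q_n}(\rho_n,R))|<R^4/n^{\ve}\r)\le n^{-2}$, on which both of your properties rest, is nowhere established: Lemma \ref{Lemma_dl_Laplace} only controls $\E[e^{-\vl X_r}]$ for $\vl\le\vl_0$, and the uniform-in-$r$ analysis of $\psi_r$ at large $\vl$ (equivalently $p$ bounded away from $0$) that your Chernoff bound requires is a substantial computation that you flag as the main obstacle but do not carry out. Second, and independently of that computation, the quantity controlled by Theorem \ref{Th_Laplace} is the number of \emph{faces of the truncated hull}, whereas property 2 concerns the number of \emph{vertices of the ball}. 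The implication goes the wrong way: a ball with few vertices may still enclose large finite components of its complement, so $|\Htr_{Q_\infty}(R)|$ can be large while $|V(B_{Q_\infty}(\rho_\infty,R))|$ is small, and a lower tail for the hull volume gives no lower tail for the ball vertex count. You would need a separate argument (for instance via the perimeters $|\partial_r Q|$ in the skeleton decomposition, or via corner-counting in the tree encoding as the paper does) to lower-bound the vertex count of the ball itself. The remaining ingredients---the re-rooting invariance of i.i.d.\ uniform edges, the transfer to $Q_n$ via Lemma \ref{Lemma_density_hulls_UIPQ_Qn} on the event that the hull is not macroscopic, and the reduction to dyadic scales at the cost of adjusting $\ve$---are sound modulo these two points.
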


In order to prove this lemma, we use the classical Cori-Vauquelin-Schaeffer bijection (or \CVS) between rooted, pointed quadrangulations and labeled trees. 
 We briefly recall it and present our notation. Let $\T_n$ be the set of all rooted labeled plane trees with $n$ edges, where by ``rooted labeled plane tree'' we mean a plane tree whose vertices bear labels in $\Z$, such that the labels of two adjacent vertices differ by at most $1$ and the root vertex has label $0$. Let $t \in \T_n$, and $l:V(t) \to \Z$ its label function. A corner of $t$ is an angular sector incident to one of its vertices. We order the corners cyclically according to the clockwise route around $t$. By convention, we extend the label function $l$ to corners of the tree, in such a way that a corner has the same label as its incident vertex. 

The \CVS allows us to get a rooted quadrangulation with $n$ faces from $t$ and from an integer $\vartheta \in \{-1,1\}$, as follows. First, add a vertex $\partial$ to the tree, and extend the labeling to $\partial$ such that $l(\partial) = -1+\inf_{t} l$. Then, for each corner $c$, let $S(c)$ be the first corner after $c$ in the contour sequence with label $l(c)-1$ (if $l(c) = \inf_t l$, we fix $S(c) = \partial$), and draw an edge between $c$ and $S(c)$ in such a way that it does not intersect the previously drawn edges. Finally, erase the edges of $t$. We obtain a quadrangulation $q$ with $n$ faces and a distinguished vertex $\partial$, and we need to specify its root edge. We root $q$ at the edge drawn from the bottom corner of the root vertex of $t$, and specify its direction using $\vartheta$: if $\vartheta = +1$ it points towards the root vertex of $t$, if $\vartheta=-1$ it points away from the root vertex of $t$.

\begin{proof}[Proof of \lemref{Lemma covering of Qn by balls}]

Let $T_n$ be uniformly chosen over $\T_n$, and $\vartheta$ a uniform integer over $\{-1,+1\}$, then the quadrangulation obtained from $(T_n,\vartheta)$ is uniformly distributed over the set of all rooted and pointed quadrangulations with $n$ faces. Forgetting the distinguished vertex, we get a uniform quadrangulation $Q_n$ with $n$ faces. 

Denote the corners of $T_n$ enumerated in clockwise order around the contour starting from the root corner by $(c_i)_{0 \leq i < 2n}$. We extend the numbering to $\Z$ by periodicity, and write $L_i$ for the label of $c_i$. \cite[Lemma 4.4]{gall2012buziosf} ensures that for every $p\geq 1$, there exists a constant $K_p$ such that for every $0 \leq j,j' \leq 2n$,
\begin{equation*}
\E\l[ \l| L_j - L_{j'} \r|^{4p} \r] \leq K_p |j-j'|^{p} .
\end{equation*}
Using Markov's inequality,
\begin{equation}
\label{Eq Label contour function is Holder}
\P\l( | L_j - L_{j'} | \geq u \r) \leq K_p \pfrac{|j-j'|}{u^4}^p .
\end{equation}
Define
\begin{equation}
\AA \eqdef \l\{ \forall i,j \in \{0, ..., 2n-1\} , i\neq j \ : \ |L_i - L_j| < n^{1/p} |i-j|^{1/4} \r\} .
\end{equation}
Then
\begin{align*}
\P(\AA^c) &\leq \sum_{i,j \in \{0, ..., 2n-1\}, i \neq j} K_p \pfrac{|i-j|}{n^{4/p} |i-j|}^p \\
&\leq 4 K_p n^{-2} .
\end{align*}
Fix $p > 5/\ve$, and let us argue on $\AA$. Write $v_i$ for the vertex of $Q_n$ that is incident to $c_i$. \cite[Proposition 5.9 (i)]{gall2012buziosf} allows us to bound the distance in $Q_n$ between $v_i$ and $v_j$ for $i\leq j$: 
\begin{equation*}
\dgr^{Q_n}(v_i,v_j) \leq L_i + L_j - 2\min_{k \in [i,j]} L_{k} +2 \leq 2 n^{1/p} |i-j|^{1/4} +2 . 
\end{equation*}
For every $0 \leq i, j < 2n$, $v_j$ thus belongs to the ball of radius $2+2n^{1/p}|i-j|^{1/4}$ centered at $v_i$. For every $n^\ve \leq R \leq n^{1/4}$, define $k(R)>0$ as the largest integer such that $2+2n^{1/p} k(R)^{1/4} < R-2$, and fix $y^R_i = v_{k(R) i}$ for every $0 \leq i < \ceil{2n/k(R)}$. Every vertex of $Q_n$ (except possibly $\partial$) is at distance strictly less than $R-2$ from at least one $y^R_i$, and $\partial$ is at distance at most $R-2$ from one of $y^R_i$, thus the balls $B_{Q_n}(y^R_i,R-1)$ cover $Q_n$. For $n$ large enough, this covering of $Q_n$ contains at most $2n/k(R) \leq n^{1+5/p}/R^4$ balls. 

Note that for every $n^\ve \leq R \leq n^{1/4}$ and $0 \leq i < \ceil{2n/k(R)}$, if we re-root the tree at the corner $c_{k(R)i}$ and subtract $l(c_{k(R)i})$ to the labels of $t$ to ensure that the resulting tree is in $\T_n$, then the map we obtain by the \CVS is exactly $Q_n$, re-rooted at the edge $e^R_{i}$ drawn from corner $c_{k(R)i}$ (oriented towards $y^R_i$ if $\vartheta=+1$, and away from it if $\vartheta=-1$). In particular, it has the same law as $Q_n$.
We complete the sequences $(e^R_i)$ and $y^R_i$ by taking $e_i^R$ equal to the root edge and $y^R_i$ equal to the root vertex for every $\ceil{2n/k(R)} \leq i < n^{1+\ve}/R^4$. Let $z^R_i$ be the tail vertex of $e^R_{i}$. Since for every $0 \leq i < n^{1+\ve}/R^4$, $z^R_i$ is at distance at most $1$ from $y^R_i$, the first property of the lemma holds.

It remains to prove that the volume of every $B_{Q_n}(y^R_i,R)$ is bounded from below by $R^4/n^{\ve}$ for every $n^\ve \leq R \leq n^{1/4}$ and $0 \leq i < n^{1+\ve}/R^4$. 
From now on, we argue on the intersection of $\AA$ with the event where the maximum degree of $Q_n$ is at most $\ln n$, which has probability going to $1$ as $n \to \infty$ by \lemref{Lemma_max_deg_Qn}. The number of corners of a vertex $v_i$ in $T_n$ is at most its degree in $Q_n$, so it is smaller than $\ln n$. 
For every $n^\ve \leq R \leq n^{1/4}$ and $0 \leq i < \ceil{2n/k(R)}$, every $v_j$ with $k(R)i \leq j \leq k(R)(i+1)$ belongs to the $R$-ball $B_{Q_n}(z^R_i,R)$. Each $v_j$ appears at most $\ln n$ times in this sequence, so that $B_{Q_n}(z^R_i,R)$ contains at least $\frac{k(R)+1}{\ln n} \geq \frac{R^4}{n^{\ve}}$ distinct vertices. Since this also holds for $\ceil{2n/k(R)} \leq i < n^{1+\ve}/R^4$, the second point of the lemma is proven. 

\end{proof}

The following proposition is the key ingredient of the proof of Theorem \ref{Th_isoperimetric_inequality}. Recall that $r$-hulls are obtained from $r$-balls by adding every connected component of the complement of the $r$-ball but the one containing the largest number of faces. Proposition \ref{Prop_covering_hulls_with_controlled_volume} strengthens \lemref{Lemma covering of Qn by balls} by exhibiting coverings of $Q_n$ by balls, such that the volumes of the corresponding hulls are bounded from above and below.

\begin{proposition}
\label{Prop_covering_hulls_with_controlled_volume}
For every $\vd \in (0,1/8)$, the following holds with probability going to $1$ as $n$ goes to $\infty$: 

For every $R$ of the form $2^k$ with $n^\vd \leq R \leq n^{1/4-\vd}$, we can find a sequence $z^R_i$, $0 \leq i <\frac{n^{1+\vd}}{R^4}$ of vertices of $Q_n$ such that 
\begin{enumerate}
\item the balls $B_{Q_n}(z^R_i,R/2)$ cover $Q_n$,
\item for every $i$, $\frac{R^4}{n^{\vd}} \leq |\Hull_{Q_n}(z^R_i,R)| \leq R^{4/3} n^{2/3+\vd}$.
\end{enumerate}
\end{proposition}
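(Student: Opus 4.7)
The plan is to combine the covering of $Q_n$ from Lemma \ref{Lemma covering of Qn by balls} with the tail estimate of Corollary \ref{Cor_moments_of_hull_volume}, transferred from the \UIPQ to $Q_n$ via the density Lemma \ref{Lemma_density_hulls_UIPQ_Qn}, and close the argument with a union bound over dyadic scales and centers.

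First, fix $\ve = \vd/100$ and, for each dyadic $R \in [n^\vd, n^{1/4-\vd}]$, apply Lemma \ref{Lemma covering of Qn by balls} at radius $R/2$ with parameter $\ve$. This produces a sequence of oriented edges $(e_i^R)_{0 \leq i < 16 n^{1+\ve}/R^4}$, whose tails $z_i^R$ satisfy, with probability tending to $1$, that the balls $B_{Q_n}(z_i^R, R/2)$ cover $Q_n$ and each contains at least $R^4/(16 n^\ve)$ vertices; moreover $Q_n$ re-rooted at any $e_i^R$ has the law of $Q_n$. I pad the sequence with copies of the root edge to reach length $\lfloor n^{1+\vd}/R^4 \rfloor$. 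The lower bound $|\Hull_{Q_n}(z_i^R, R)| \geq R^4/n^\vd$ is then immediate from the inclusion $B_{Q_n}(z_i^R, R/2) \subset \Hull_{Q_n}(z_i^R, R)$ together with a double-counting of vertex-face incidences inside the ball $B := B_{Q_n}(z_i^R, R/2)$: the identity $4 |F(B)| = \sum_{v \in V(B)} |\{f \in F(B) : v \in f\}| \geq |V(B)|$ gives $|F(B)| \geq R^4/(64 n^\ve) \geq R^4/n^\vd$ for $n$ large.

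For the upper bound, set $N = R^{4/3} n^{2/3+\vd}$; the constraint $R \leq n^{1/4-\vd}$ gives $N \leq n^{1-\vd/3} < n/2$ for $n$ large. By re-rooting invariance it suffices to bound $\P(|\Hull_{Q_n}(\rho_n, R)| > N)$. For admissible hulls $b$ with $|b| \leq n/2$ the density ratio in Lemma \ref{Lemma_density_hulls_UIPQ_Qn} is at most $c' 2^{5/2}$; summing over such $b$ and applying Corollary \ref{Cor_moments_of_hull_volume} with some $\vd' \ll \vd$ yields
\begin{equation*}
\P\l( N < |\Hull_{Q_n}(\rho_n, R)| \leq n/2 \r) \leq C \, \P(|\Hull_{Q_\infty}(R)| > N) \leq C' R^4 n^{-1-3\vd/2+O(\vd')} .
\end{equation*}
A union bound over the $O(\log n)$ dyadic scales and the $\leq n^{1+\vd}/R^4$ centers per scale then gives a total of order $n^{-\vd/2+O(\vd')} \log n$, which tends to $0$.

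The principal obstacle is the residual event $\{|\Hull_{Q_n}(\rho_n, R)| > n/2\}$, on which the density factor $((n-|b|)/n)^{-5/2}$ in Lemma \ref{Lemma_density_hulls_UIPQ_Qn} blows up. At scales $R \leq n^{1/4-\vd}$, well below the diameter of $Q_n$, this event is atypical: the $R$-ball has size $o(n)$ and, in a typical configuration, one macroscopic component of its complement absorbs the bulk of the map, forcing the hull to stay much smaller than $n/2$. I would quantify this by combining an analogous tail bound on the ball volume (via Lemma \ref{Lemma_density_hulls_UIPQ_Qn} and Corollary \ref{Cor_moments_of_hull_volume} applied to balls in the \UIPQ, possibly through the inclusion $\Hull_{Q_\infty}(r) \subset \Htr_{Q_\infty}(r+1)$) with a classical diameter estimate for $Q_n$ coming from the \CVS. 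The resulting bound merges with the main estimate above through the same union bound to close the proof.
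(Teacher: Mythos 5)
Your overall architecture is the paper's: covering via Lemma \ref{Lemma covering of Qn by balls}, tail estimate from Corollary \ref{Cor_moments_of_hull_volume} transferred through Lemma \ref{Lemma_density_hulls_UIPQ_Qn}, then a union bound over dyadic scales and centers; the lower volume bound and the main probabilistic estimate are carried out correctly. The one step you do not actually prove is precisely the one you flag as the ``principal obstacle'': ruling out $|\Hull_{Q_n}(z^R_i,R)|>n/2$. As written this is a genuine gap, and the route you sketch for closing it is not the right one. A ``tail bound on the ball volume'' transferred from the \UIPQ via Lemma \ref{Lemma_density_hulls_UIPQ_Qn} cannot control this event, for two reasons: the density factor $\bigl((n-N)/n\bigr)^{-5/2}$ blows up exactly on the hulls you are trying to exclude, and a small ball can still have a huge hull (the hull is the ball plus all but the largest complementary component, so its size is governed by the complement's structure, not by the ball's volume).

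The paper's resolution is deterministic, not probabilistic: one works on the intersection of the event of Lemma \ref{Lemma covering of Qn by balls} with the event $\{\mathrm{diam}(Q_n)\geq 8n^{1/4-\ve}\}$ (from Chassaing--Schaeffer), both of probability tending to $1$. On this event, for any $x$ and any $R\leq n^{1/4-\vd}$ there is a vertex $z$ at distance at least $4n^{1/4-\ve}$ from $x$; $z$ lies in one of the covering balls of radius $\ceil{n^{1/4-\ve}}$, which contains at least $n^{1-5\ve}$ vertices and is entirely at distance greater than $R$ from $x$, hence sits inside a single connected component of the complement of $B_{Q_n}(x,R)$. That component therefore has at least $n^{1-5\ve}$ faces, so \emph{every} $R$-hull satisfies $|\Hull_{Q_n}(x,R)|\leq n-n^{1-5\ve}$ deterministically on the good event, and the event $\{|\Hull_{Q_n}(z^R_i,R)|>n/2\}$ simply never occurs. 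The price is that the density factor on the surviving hulls is $n^{O(\ve)}$ rather than your $c'2^{5/2}$, but this is harmless: with $\ve$ a small multiple of $\vd$ the extra $n^{O(\ve)}$ is absorbed into the exponents exactly as in your union bound. If you replace your residual-event paragraph with this argument, the proof closes.
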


\begin{proof}

Let $\ve = \vd/40$. 
By \cite{chassaing2004random}, the probability that the diameter of $Q_n$ is at least $8 n^{1/4-\ve}$ goes to $1$ as $n$ goes to infinity. From now on we argue on the intersection of this event with the event of \lemref{Lemma covering of Qn by balls}, whose probability also goes to $1$ as $n$ goes to infinity. We consider the sequence $(z^i_R)$ given by \lemref{Lemma covering of Qn by balls}, so the property 1 and the minoration in the property 2 of  \propref{Prop_covering_hulls_with_controlled_volume} already hold. 

For every $0 < R \leq n^{1/4-\ve}$, for every $x \in V(Q_n)$, one may find $z \in V(Q_n)$ at distance at least $4n^{1/4-\ve}$ from $x$. Since we work of the event of \lemref{Lemma covering of Qn by balls}, this vertex is contained in a $\ceil{n^{1/4-\ve}}$-ball $B$ containing at least $n^{1-5\ve}$ vertices. 
Consider now the ball $B_{Q_n}(x,R)$. One of the connected component of its complement contains the ball $B$, thus it contains at least $n^{1-5\ve}$ inner vertices. Since this connected component is a quadrangulation with simple boundary, Euler's formula gives that its number of faces is also larger than $n^{1-5\ve}$. It follows that the $R$-hull $\Hull_{Q_n}(x,R)$ contains at most $n-n^{1-5\ve}$ faces.

Now take $n^\ve \leq R \leq n^{1/4-\ve}$ and $0\leq i< \frac{n^{1+\ve}}{R^4}$. The event $\{ |\Hull_{Q_\infty}(\rho_\infty,R)| > t R^4 \}$ has probability at most $C(\ve) t^{-\frac{3}{2}+\ve}$ by Corollary \ref{Cor_moments_of_hull_volume}. Together with \lemref{Lemma_density_hulls_UIPQ_Qn}, and using the fact that $Q_n$ re-rooted at $e^i_R$ has the same law as $Q_n$, 
we get
\begin{equation*}
\P( |\Hull_{Q_n}(z^R_i,R)| > t R^4 ) \leq c \l(n^{-5\ve}\r)^{-5/2} \P( |\Hull_{Q_\infty}(\rho_\infty,R)| > t R^4 ) \leq cC(\ve) n^{13\ve} t^{-\frac{3}{2}+\ve}.
\end{equation*}
Fix $t = \pfrac{R^4}{n^{1+15\ve}}^{\frac{1}{-\frac{3}{2}+\ve}}$ and sum over every $0 \leq i < n^{1+\ve}/R^4$:
\begin{align}
\label{Eq_hulls_ont_grand_volume_faible_proba}
\P\l( \exists \ 0\leq i< \frac{n^{1+\ve}}{R^4} \ : \ |\Hull_{Q_n}(z^R_i,R)| > t R^4 \r) &\leq \frac{n^{1+\ve}}{R^4} c C(\ve) n^{13\ve} \pfrac{R^4}{n^{1+15\ve}} \\
&\leq c C(\ve) n^{-\ve} . \nonumber
\end{align}

Finally, consider the union of the events in \eqref{Eq_hulls_ont_grand_volume_faible_proba} over all $R$ with $n^{\ve} \leq R \leq n^{1/4-\ve}$, such that $R$ is of the form $2^k$ for some integer $k$. 
There are at most $(\ln n)/(\ln 2)$ such $R$, so the probability that the event in \eqref{Eq_hulls_ont_grand_volume_faible_proba} holds for at least one of these $R$ goes to $0$ as $n$ goes to infinity. Since $tR^4 \leq R^{4/3} n^{2/3+\vd}$, this gives the majoration in property 2.

\end{proof}

\section{Proof of the bound on the size of bottlenecks}
\label{Sec_Isoperimetric inequality}

We now use the results of Section \ref{Section_Covering} to prove \thref{Th_isoperimetric_inequality}. 
We first establish the inequality of \thref{Th_isoperimetric_inequality} for sets of faces of the quadrangulation whose boundary is connected, then for sets of faces that are connected when seen as a closed subdomain of the sphere (we simply say ``connected'' from now on), before proving it for any set of faces. The first step is done in the following Lemma:

\begin{lemma}
\label{Lemma_isoperimetric_inequality}
For every $\nu \in (0,3/8)$, the probability that
\begin{equation*}
\inf \frac{|\partial S|^{4/3}}{|S|} \geq n^{-2/3-\nu} 
\end{equation*}
goes to $1$ as $n \to \infty$, where the infimum holds over all subsets $S$ of $F(Q_n)$ such that $|S| \leq n/2$ and $\partial S$ is connected. 
\end{lemma}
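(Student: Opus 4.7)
The plan is to use Proposition \ref{Prop_covering_hulls_with_controlled_volume} to enclose $\partial S$ in a single standard hull, then exploit the fact that the complement of such a hull is a \emph{single} large connected block of faces: connectedness of $\partial S$ combined with $|S|\leq n/2$ will force $S$ itself to sit inside the hull, after which the volume bound of Proposition \ref{Prop_covering_hulls_with_controlled_volume} yields the desired inequality. Fix $\vd \in (0,\nu)$ small enough in terms of $\nu$ (the precise smallness is forced by the final inequality) and work on the high-probability event of Proposition \ref{Prop_covering_hulls_with_controlled_volume}.

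Given $S$ with $|S| \leq n/2$ and $\partial S$ connected, write $\ell \eqdef |\partial S|$ and let $R$ be the smallest power of $2$ with $R \geq \max(4\ell+4,\, n^\vd)$. Assume first the generic regime $R \leq n^{1/4-\vd}$. Since the balls $B_{Q_n}(z^R_i, R/2)$ cover $Q_n$, some vertex $v$ of $\partial S$ lies in $B_{Q_n}(z^R_i, R/2)$ for a suitable $i$, and I set $H \eqdef \Hull_{Q_n}(z^R_i, R)$. Because $\partial S$ is connected, any two of its vertices are at $Q_n$-distance at most $\ell$, so every vertex of $\partial S$ is at distance at most $R/2+\ell \leq R-1$ from $z^R_i$. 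Consequently, every face incident to an edge of $\partial S$ is incident to such a vertex and therefore belongs to the $R$-ball $B_{Q_n}(z^R_i, R) \subset H$.

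The decisive step is then a topological observation: by definition of the standard hull, $Q_n \setminus H$ is a single connected block of faces, and the previous step shows that no edge separating two of its faces belongs to $\partial S$. Proposition \ref{Prop_covering_hulls_with_controlled_volume} gives $|H| \leq R^{4/3} n^{2/3+\vd} \leq n^{1-\vd/3}$, hence $|Q_n \setminus H| > n/2 \geq |S|$; the block $Q_n \setminus H$, being connected and too large to fit in $S$, must lie entirely in $S^c$, so $S \subset H$. This yields $|S| \leq R^{4/3} n^{2/3+\vd} \leq C \max(\ell, n^\vd)^{4/3}\, n^{2/3+\vd}$, and a straightforward rearrangement shows that for $\vd$ small enough in terms of $\nu$ this gives $|\partial S|^{4/3}/|S| \geq n^{-2/3-\nu}$ (the case $\ell < n^\vd/4$ is absorbed by the $n^\vd$ term and still gives a bound of the form $n^{-2/3 - 7\vd/3}$).

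The only remaining case is $R > n^{1/4-\vd}$, which forces $\ell \gtrsim n^{1/4-\vd}$; here the trivial bound $|S| \leq n$ already yields $|\partial S|^{4/3}/|S| \geq c\, n^{-2/3 - 4\vd/3}$, which is again at least $n^{-2/3-\nu}$ for $\vd$ small. I expect the only real subtlety to be the topological argument in the third paragraph — everything else is routine dyadic bookkeeping. This is precisely why standard hulls were defined in Section \ref{Sec_Standard hulls in finite quadrangulations and density with the UIPQ} by the largest-component convention: it is that convention that guarantees $Q_n \setminus H$ is one connected block, which is what triggers the dichotomy forcing $S \subset H$.
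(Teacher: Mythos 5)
Your proposal is correct and follows essentially the same route as the paper: cover $Q_n$ with the balls of Proposition \ref{Prop_covering_hulls_with_controlled_volume}, use connectedness of $\partial S$ to trap it in a single $R$-ball, then invoke the dichotomy ($S$ inside the hull or $S$ contains its large connected complement) and the hull-volume upper bound. The only cosmetic difference is that you merge the paper's two cases $|\partial S|<n^\vd$ and $n^\vd\le|\partial S|<n^{1/4-\vd}$ by choosing $R$ adaptively as the smallest admissible power of two above $\max(4|\partial S|+4,\,n^\vd)$, which changes nothing substantive.
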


\begin{proof}
Let us argue on the event of \propref{Prop_covering_hulls_with_controlled_volume} with $\vd = \nu/3$. 
Consider $S$ a subset of $F(Q_n)$ with $|S| \leq n/2$ such that $\partial S$ is connected. 

If $|\partial S| \geq n^{1/4-\vd}$, since we necessarily have $|S| \leq n$, 
\begin{equation*}
\frac{|\partial S|^{4/3}}{|S|} \geq \frac{n^{\frac{4}{3}\l(\frac{1}{4}-\vd\r)}}{n} = n^{-2/3-4\vd/3} \geq n^{-2/3-\nu} .
\end{equation*}

If $|\partial S| < n^{\vd}$, let $R$ the smallest power of two larger than $2n^\vd$. $\partial S$ is contained in a $R$-ball centered at one of $z^R_i$. Either $S$ is contained in $\Hull_{Q_n}(z^R_i,R)$, or $S$ contains the complement of $\Hull_{Q_n}(z^R_i,R)$. 
We are on the event of \propref{Prop_covering_hulls_with_controlled_volume}, so (by the second point in the proposition) the complement of $\Hull_{Q_n}(z^R_i,R)$ has volume strictly larger than $n/2$. Since $|S| \leq n/2$, 
$S$ is contained in $\Hull_{Q_n}(z^R_i,R)$, and thus its number of faces is less than $n^{\frac{2}{3}+\vd+\frac{4}{3}\vd}$ (by property 2 of \propref{Prop_covering_hulls_with_controlled_volume}), which gives 
\begin{equation*}
\frac{|\partial S|^{4/3}}{|S|} \geq n^{-2/3-3\vd} \geq n^{-2/3-\nu}.
\end{equation*}

If $n^{\vd} \leq |\partial S| < n^{1/4-\vd}$, consider $R = 2^k$ with $R/2 \leq |\partial S| < R$. 
Since the balls $B_{Q_n}(z^R_i,R/2)$ cover $Q_n$, we can find $i$ such that $d(z^R_i, \partial S) \leq R/2$, hence by connexity $\partial S \subset B_{Q_n}(z^R_i,R)$. By the same argument as in the case $|\partial S| < n^{\vd}$, we have $S \subset \Hull_{Q_n}(z^R_i,R)$, and thus
\begin{equation*}
\frac{|\partial S|^{4/3}}{|S|} \geq \frac{(R/2)^{4/3}}{R^{4/3} n^{2/3+\vd}} \geq n^{-2/3-\nu}.
\end{equation*}

\end{proof}

\begin{proof}[Proof of \thref{Th_isoperimetric_inequality}]

Let $\vd>0$. 
Fix $\nu = 3\vd/4$, and let $n$ be taken large enough so that $n^{\nu/3} > 2$. For the rest of the proof, we argue on the event of \lemref{Lemma_isoperimetric_inequality}, whose probability goes to one as $n \to \infty$.

Consider $S$ a connected subset of $F(Q_n)$ with $|S| \leq n/2$ and $|\partial S|\leq n^{1/4-\nu}$. Let $S_1, \ldots, S_k$ be the connected components of the complement of $S$. Note that every $S_i$ is connected and its complement is connected, so by planarity considerations $\partial S_i$ is connected too; furthermore, the $\partial S_i$ are disjoint, and $\partial S = \cup_{1 \leq i \leq k} \partial S_i$. We claim that if $n$ is large enough, then exactly one of the $S_i$ has volume at least $n/2$. Note that it is equivalent to show that at least one has volume at least $n/2$. We prove this claim by contradiction: assume that every $S_i$ has volume at most $n/2$. Since we work on the event of \lemref{Lemma_isoperimetric_inequality}, for every $1\leq i \leq k$, $|S_i| \leq n^{2/3 + \nu} |\partial S_i|^{4/3}$, thus
\begin{align*}
n = |Q_n| &= |S| + \sum_{i=1}^k |S_i|  \\
&\leq n/2 + \sum_{i=1}^k n^{2/3 + \nu} |\partial S_i|^{4/3} \\
&\leq n/2 + n^{2/3 + \nu} |\partial S|^{4/3} \\
&\leq n/2 + n^{1 - \nu/3} \\
&< n 
\end{align*}
which is impossible, proving our claim. 
Without loss of generality, we assume that $|S_1| > n/2$. Define $S' \eqdef S \cup \bigcup_{2 \leq i \leq k} S_i$. Then $\partial S' = \partial S_1$ is connected and $|S'| \leq n/2$, thus 
\begin{equation*}
|S| \leq |S'| \leq n^{2/3 + \nu} |\partial S'|^{4/3} \leq n^{2/3 + \nu} |\partial S|^{4/3} .
\end{equation*} 
It remains to consider the case of a connected $S \subset F(Q_n)$ with $|S|\leq n/2$ and $|\partial S| > n^{1/4-\nu}$. It is immediate that for such a set, 
\begin{equation*}
|S| \leq n \leq n^{2/3 + 4\nu/3} |\partial S|^{4/3} .
\end{equation*} 
To sum up, on the event of \lemref{Lemma_isoperimetric_inequality}, for every connected set $S \subset F(Q_n)$ with $|S|\leq n/2$,
\begin{equation}
\label{Eq_ineg_isop_connected}
|S| \leq n^{2/3 + \vd} |\partial S|^{4/3} .
\end{equation}

Now consider a generic $T \subset F(Q_n)$ with $|T|\leq n/2$. Let $T_1, ..., T_j$ be its connected components. By \eqref{Eq_ineg_isop_connected}, for every $1 \leq i \leq j$, $|T_i| \leq n^{2/3 + \vd} |\partial T_i|^{4/3}$, and since $\partial T$ is the disjoint union of the $\partial T_i$ for $1\leq i \leq j$, by convexity
\begin{equation*}
|T| = \sum_{i=1}^j |T_i| \leq \sum_{i=1}^j n^{2/3 + \vd} |\partial T_i|^{4/3} \leq n^{2/3 + \vd} |\partial T|^{4/3} .
\end{equation*}
\thref{Th_isoperimetric_inequality} follows.

\end{proof}

\section{Simulations}

The upper bound on the uniform mixing time of Theorems \ref{Th_mixing_time} and \ref{Th_mixing_time_dual}, in $n^{3/2+o(1)}$, does not match the lower bound in $n^{1+o(1)}$ that we derived from heuristic considerations in the introduction. To try and conjecture what the correct asymptotic is, we have simulated quadrangulations of size ranging from 10 to 2500, and computed the $\frac{1}{2}$-uniform mixing time of the lazy random walk on their vertices, as well as the $\frac 1 2$-mixing time in total variation and the relaxation time, two usual notions of mixing times, defined as follows for a Markov chain with state space $E$, transition kernel $P$ and stationary measure $\pi$:
\begin{equation}
\begin{cases}
\tau^{\mathrm{TV}}(\ve) = \inf\l\{ n\geq 1\ : \ \sup_{x\in E} d_\mathrm{TV}(P^n(x,\cdot), \pi) < \ve \r\} ,\\
\tau^\mathrm{rel} = (1-\vl_2)^{-1} ,
\end{cases}
\end{equation}
where $\vl_2$ is the second-highest eigenvalue of $P$ (the highest being 1). We also computed the uniform mixing time, resp. the mixing time in total variation and the relaxation time, for the lazy random walk on the faces. The simulation code, in R, is available on the author's webpage.\footnote{\url{https://www.math.uzh.ch/index.php?id=people&key1=12738}} 

Quadrangulations with $n$ vertices are generated using the Cori-Vauquelin-Schaeffer bijection, in linear time. The uniform mixing time is computed by quick exponentiation of the transition matrices of the lazy random walk. This last step consumes the bulk of the computation time, and led us to only simulate quadrangulations with up to 2500 vertices, see \figref{Fig_number_simulations}. 

\begin{figure}[!h]
\begin{center}
\begin{tabular}{c|c|c|c|c|c|c|c|c|c}
number of vertices & 10 & 20 & 40 & 80 & 160 & 320 & 640 & 1280 & 2500 \\
\hline
number of simulations & 40000 & 40000 & 40000 & 40000 & 40000 & 26000 & 5000 & 1940 & 457
\end{tabular}
\end{center}
\caption{Number of simulated maps for each selected sizes.}
\label{Fig_number_simulations}
\end{figure}

Our first observation is that the distribution of the uniform mixing times (renormalized by their empirical means) seems to converge as the size of the map goes to $\infty$, as illustrated in \figref{Fig_density_mixing_time_sample_multiple}. 
This holds true as well for the mixing time in total variation and the relaxation time, for the lazy random walk on vertices and on faces. 
The limit law presents a light tail near zero, as well as an exponential tail towards $+\infty$.

\begin{figure}[!h]
\begin{center}
\includegraphics[width=\textwidth]{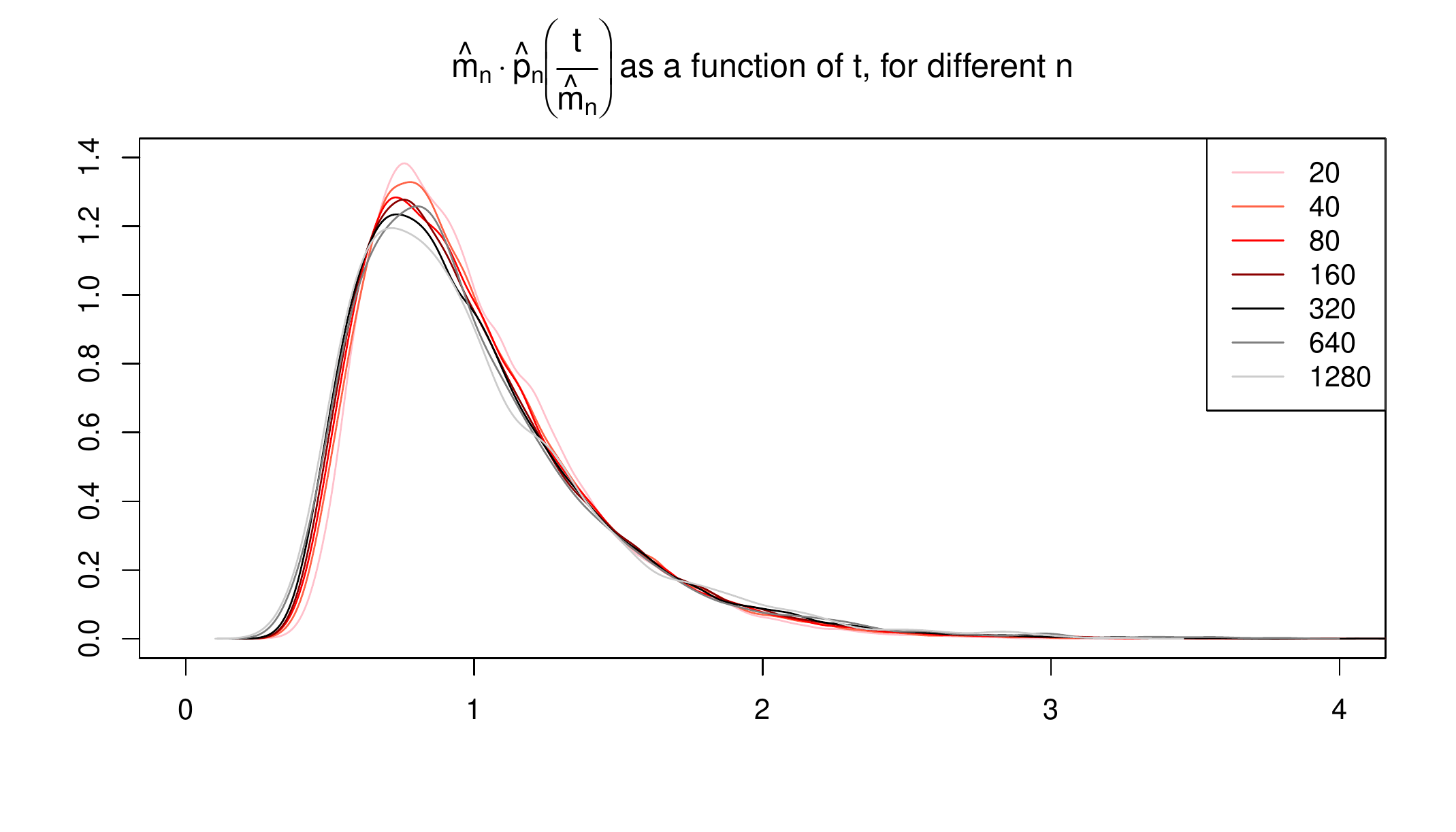}
\end{center}
\vspace{-3em}
\caption{Estimated probability mass function $\hat p_n$ of $\tau_{Q_n}(1/2)$, for various values of $n$, rescaled by the empirical mean $\hat m_n$ of $\tau_{Q_n}(1/2)$.} 
\label{Fig_density_mixing_time_sample_multiple}
\end{figure}

We also note a strong correlation between different mixing times, see \figref{Fig_covariance_mixing_times}. 
In particular, the mixing times in the quadrangulation and its dual are increasingly correlated as the size of the quadrangulation increases; we draw a parallel with \cite{fpp}, where the authors prove that triangulations and their duals are asymptotically isometric in the large scale as their size goes to $+\infty$ (the results can be generalized to quadrangulations, as is done in \cite{lehe2019fpp}, although it does not handle the dual). This leads us to conjecture that it is the macroscopic scale, rather than the microscopic scale, that influences the mixing time the most, in the sense that if two maps are asymptotically isometric when their distances are appropriately rescaled, then their mixing times will be asymptotically equal.

\begin{figure}[!h]
\begin{center}
\begin{tabular}{c|c|c|c|c}
number of vertices & $\mathrm{Corr}(\tau_{Q_n}, \tau^\mathrm{TV}_{Q_n})$ & $\mathrm{Corr}(\tau_{Q_n}, \tau^\mathrm{rel}_{Q_n})$ & $\mathrm{Corr}(\tau^\mathrm{TV}_{Q_n}, \tau^\mathrm{rel}_{Q_n})$ & $\mathrm{Corr}(\tau_{Q_n}, \tau_{Q^\dagger_n})$ \\ \hline
10  &  0.9610  &  0.8521 & 0.9375  &  0.5383 \\ \hline
20  &  0.9638 &  0.8678  & 0.9576 &  0.5834 \\ \hline
40  &  0.9610 &  0.8673  & 0.9618 & 0.6542 \\ \hline
80  &  0.9588  &  0.8683  & 0.9641 & 0.7185 \\ \hline
160  &  0.9565  &  0.8653  & 0.9645 & 0.7766 \\ \hline
320  &  0.9557  &  0.8655  & 0.9651 & 0.8207 \\ \hline
640  &  0.9547  &  0.8697  & 0.9684 & 0.8625 \\ \hline
1280  &  0.9530  &  0.8652  & 0.9673 & 0.8953 \\ \hline
2500  &  0.9453  &  0.8466  & 0.9639 & 0.8995 
\end{tabular}
\end{center}
\caption{Empirical correlation coefficients, defined as the ratio of the empirical covariance by the product of the empirical standard deviations, of several pairs of mixing times, as a function of the number of vertices of the quadrangulation. The level of the mixing time is always 1/2 (when relevant).}

\label{Fig_covariance_mixing_times}
\end{figure}

Finally, the simulations give some insight into the asymptotic of the mixing time. It appears that the conjectured lower bound $\tau_{Q_n}(1/2) \geq n^{1+o(1)}$ is closer to the truth. 
\figref{Fig_asymp_ratio_mixing_time_sample} shows the empirical mean of $\frac{\tau_{Q_n}(1/2)}{n}$ as a function of $\ln n$. The fact that this quantity increases in $n$ supports the lower bound. As for the upper bound, if $\tau_{Q_n}(1/2) = n^{\vg + o(1)}$ for some $1 < \vg \leq 3/2$, then the sequence in \figref{Fig_asymp_ratio_mixing_time_sample} should grow exponentially: the concavity of the sequence seems to prevent this claim. In fact, assuming that this sequence remains concave, or more weakly that it grows at most linearly, would directly yield that $\tau_{Q_n}(1/2) = O(n \ln n)$. We formalize our observation in the following conjecture. 

\begin{figure}[!ht]
\begin{center}
\includegraphics[width=0.7\textwidth]{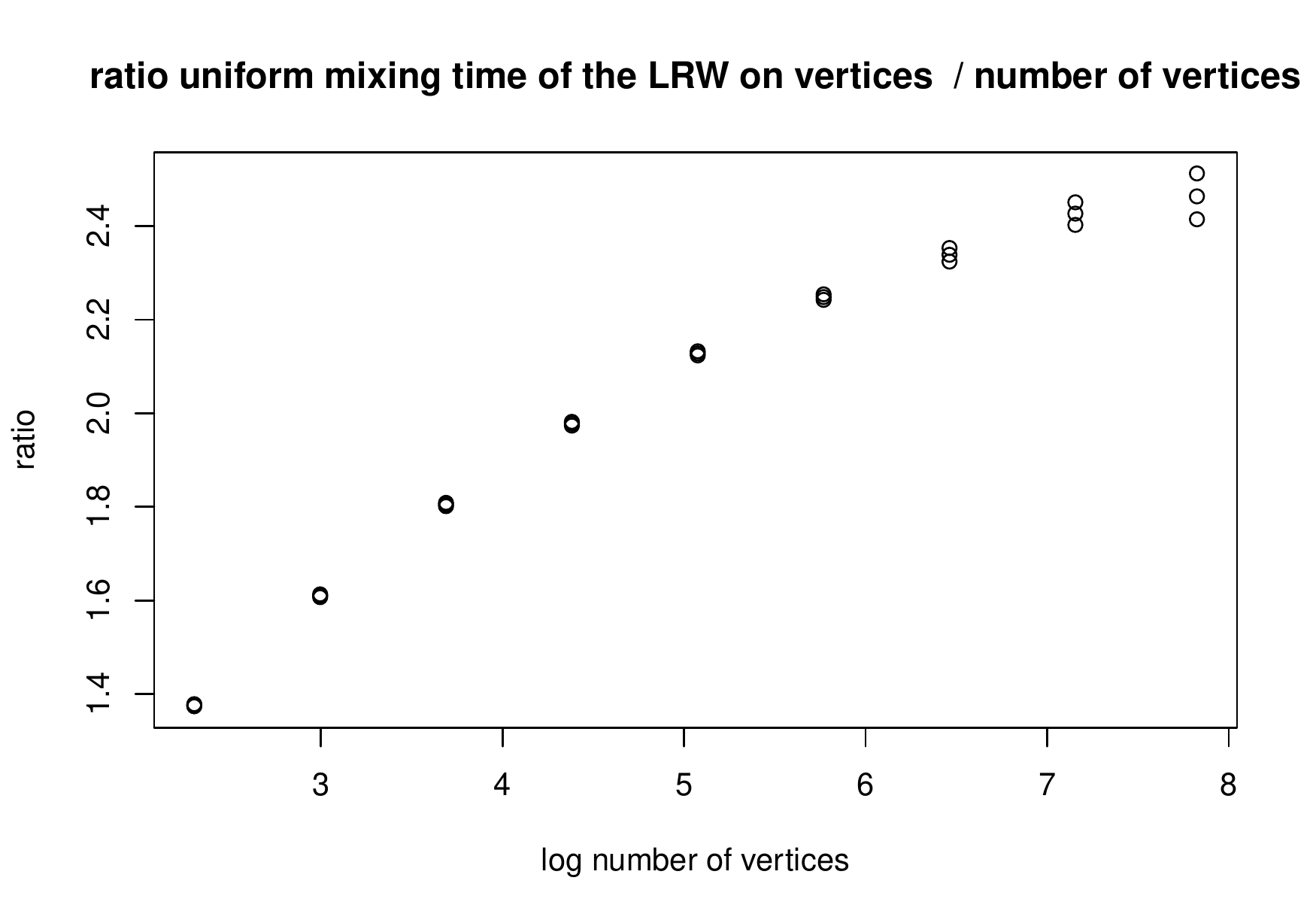}
\end{center}
\vspace{-1em}
\caption{$\frac{\tau_{Q_n}(1/2)}{n}$ as a function of $\ln n$. We display three points for each value of $n$: the central one is the empirical mean of the observations, and the top and bottom ones are at $\pm 1$ empirical standard deviation from the empirical mean. More mixing times are represented in \figref{Fig_asymp_ratio_mixing_time} in Annex.}
\label{Fig_asymp_ratio_mixing_time_sample}
\end{figure}

\begin{conjecture}

For every $\vd, \ve > 0$, with probability going to $1$ as $n\to\infty$, $\tau_{Q_n}(\ve) \in [n^{1-\vd}, n^{1+\vd}]$. This also holds for $\tau_{Q^\dagger_n}(\ve)$, and for $\tau^\mathrm{TV}_{Q_n}(\ve)$, $\tau^\mathrm{TV}_{Q^\dagger_n}(\ve)$,  $\tau^\mathrm{rel}_{Q_n}$ and  $\tau^\mathrm{rel}_{Q^\dagger_n}$.
\end{conjecture}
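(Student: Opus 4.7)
The conjectured range splits into a lower bound $\tau_{Q_n}(\ve) \geq n^{1-\vd}$ and a matching upper bound $\tau_{Q_n}(\ve) \leq n^{1+\vd}$, the latter improving the $n^{3/2+\vd}$ of Theorem \ref{Th_mixing_time} by roughly a factor $\sqrt n$. The lower bound seems accessible by existing tools; the upper bound is the genuinely open problem.

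For the lower bound the plan is to transfer the anomalous-diffusion estimate of \cite{gwynne2020anomalous} (or its \UIPQ analogue, obtainable via the same machinery) to $Q_n$ up to time $n^{1-\vd}$. The key point is that before time $t = n^{1-\vd}$ the walk has only visited a ball of radius $t^{1/4+o(1)} = n^{1/4-\vd/4+o(1)}$, which with high probability sees only the local-limit geometry: a coupling between $Q_n$ and the \UIPQ on this ball, combined with a rerooting/stationarity argument based on Lemma \ref{Lemma_density_hulls_UIPQ_Qn}, should give $\dgr^{Q_n}(X_0,X_t) \leq t^{1/4+o(1)}$ w.h.p. One then uses the Chassaing--Schaeffer diameter lower bound $\mathrm{diam}(Q_n) \geq \cste n^{1/4}$ and the sphericity of the Brownian map to exhibit two vertices $x,y$ with $\dgr^{Q_n}(x,y) \geq \tfrac12 n^{1/4}$ and pick $A = B_{Q_n}(x,\tfrac14 n^{1/4})$; then $p_{Q_n}^t(y,A)=0$ for $t < n^{1-\vd}$, while the volume control of Proposition \ref{Prop_covering_hulls_with_controlled_volume} ensures $\pi_{Q_n}(A) \in (c,1-c)$. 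This forbids uniform mixing at level $\ve$.

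For the upper bound the natural target is a uniform on-diagonal heat-kernel estimate of the form $p_{Q_n}^t(x,y) \leq C\, t^{-1+o(1)}$ for all $x,y$ and $t \leq n^{1+\vd}$, i.e.\ a spectral dimension $d_s = 2$. Plugging such a bound into $|p_{Q_n}^t(x,y)/\pi_{Q_n}(y) - 1|$ at $t$ slightly above $n$ yields the conjecture directly. The classical route is a Nash-type inequality at every scale $r$, fed by a two-sided ball volume estimate $|B_{Q_n}(x,r)| \asymp r^{4\pm o(1)}$ and the isoperimetric profile of Theorem \ref{Th_isoperimetric_inequality}. The upper ball-volume bound follows from Corollary \ref{Cor_moments_of_hull_volume} and \eqref{Eq_inclusion_hulls}; the lower ball-volume bound holds at most vertices by Proposition \ref{Prop_covering_hulls_with_controlled_volume}; and the isoperimetry is exactly Theorem \ref{Th_isoperimetric_inequality}. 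Assembling these into a Nash inequality and iterating should in principle give the mixing-time bound.

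The main obstacle, and the reason this remains a conjecture, is that the heat-kernel estimate has to be \emph{uniform} in the starting vertex. Rerooting and reversibility give good annealed control (at a typical vertex), but uniform mixing demands simultaneous bounds near \emph{every} vertex of $Q_n$, including those in the tip of a long bottleneck or inside an atypically small hull. Upgrading the annealed $t^{-1+o(1)}$ return probability to a quenched, uniform-in-$x$ estimate -- the discrete counterpart of two-sided subgaussian heat kernel estimates on the Brownian map -- is the critical missing input. Any attempted proof would most likely require a quantitative coupling between $Q_n$ and $Q_\infty$ on every mesoscopic ball $B_{Q_n}(x,r)$ with $r \geq n^{\vd}$, and/or tools from Liouville quantum gravity and the recent scaling-limit heat kernel estimates for random planar maps.
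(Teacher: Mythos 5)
You should first note that the statement you were asked to prove is presented in the paper as a \emph{conjecture}: the paper offers no proof of it, only numerical simulations and the heuristic lower-bound discussion in its introduction (diameter $n^{1/4+o(1)}$ combined with the subdiffusive displacement exponent of the walk). Your proposal, by your own admission in its final paragraph, is likewise not a proof: it is a research programme whose critical step --- a quenched, uniform-in-the-starting-point on-diagonal heat-kernel bound $p^t_{Q_n}(x,x)\leq t^{-1+o(1)}$ valid for $t$ up to $n^{1+\vd}$ --- is precisely the open problem. So there is a genuine gap, and it is the one you name yourself: nothing in the paper (nor in the inputs you cite) upgrades annealed return-probability estimates to the simultaneous control over \emph{every} vertex that the uniform mixing time requires. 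The paper's best rigorous upper bound remains $n^{3/2+\vd}$ via the Jerrum--Sinclair conductance bound, and the isoperimetric input of Theorem \ref{Th_isoperimetric_inequality} alone cannot do better, since Cheeger-type arguments inherently lose a square.

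Two further points on the lower-bound half, which you present as ``accessible'' but which is also not established. First, your claim that $p^t_{Q_n}(y,A)=0$ for $t<n^{1-\vd}$ is not what a displacement estimate gives: the walk reaches $A$ with small but nonzero probability, so the correct conclusion is $p^t_{Q_n}(y,A)=o(1)$ while $\pi_{Q_n}(A)\geq c$, which still obstructs mixing at level $\ve$ but requires the displacement bound to hold for the walk started at $y$, a fixed and possibly atypical vertex, not merely at the root of the local limit. Second, making that transfer from the \UIPT (where the estimate of \cite{gwynne2020anomalous} is actually proved) or the \UIPQ to a worst-case starting vertex of $Q_n$ is itself nontrivial and is not supplied by Lemma \ref{Lemma_density_hulls_UIPQ_Qn}, which compares probabilities of fixed hull events rather than quenched walk behaviour. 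The only lower bound the paper records as provable is the weaker total-variation bound $n^{3/4+o(1)}$ following from Corollary \ref{Cor_cheeger}, and it explicitly flags the exponent $1$ as conjectural.
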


This conjecture could be further strengthened by specifying the existence of a sequence $r_n$ such that $r_n^{-1} \tau_{Q_n} (\ve)$ converges in distribution, in accordance with our above observation.

Another possibility of continuation would be to consider the simple random walk instead of the lazy random walk: the mixing time of the simple random walk should asymptotically be half the mixing time of the lazy random walk (of course one needs to be careful when working on bipartite graphs since they are not aperiodic). One could also work on other models of random maps; in fact, we feel confident that the methods in this article can be adapted to triangulations.

\bibliographystyle{plain}
\bibliography{bibliographie_tpsMel}

\begin{thebibliography}{10}

\bibitem{banderier2001random}
Cyril Banderier, Philippe Flajolet, Gilles Schaeffer, and Michele Soria.
\newblock Random maps, coalescing saddles, singularity analysis, and {A}iry
  phenomena.
\newblock {\em Random Structures Algorithms}, 19(3-4):194--246, 2001.

\bibitem{bingham1989regular}
Nicholas~H Bingham, Charles~M Goldie, and Jef~L Teugels.
\newblock {\em Regular variation}, volume~27.
\newblock Cambridge university press, 1989.

\bibitem{chassaing2004random}
Philippe Chassaing and Gilles Schaeffer.
\newblock Random planar lattices and integrated superbrownian excursion.
\newblock {\em Probab. Theory Related Fields}, 128(2):161--212, 2004.

\bibitem{curien2016hull}
Nicolas Curien and Jean-Fran{\c{c}}ois Le~Gall.
\newblock The hull process of the {B}rownian plane.
\newblock {\em Probab. Theory Related Fields}, 166(1-2):187--231, 2016.

\bibitem{CurienLeGall2015scaling}
Nicolas Curien and Jean-Fran{\c{c}}ois Le~Gall.
\newblock Scaling limits for the peeling process on random maps.
\newblock In {\em Ann. Inst. Henri Poincar{\'e} Probab. Stat.}, volume 53(1),
  pages 322--357. Institut Henri Poincar{\'e}, 2017.

\bibitem{fpp}
Nicolas Curien and {Jean-François} {Le Gall}.
\newblock First-passage percolation and local modifications of distances in
  random triangulations.
\newblock {\em Ann. Sci. Éc. Norm. Supér.}, 52:631--701, 2019.

\bibitem{gao2000distribution}
Zhicheng Gao and Nicholas~C Wormald.
\newblock The distribution of the maximum vertex degree in random planar maps.
\newblock {\em J. Combin. Theory Ser. A}, 89(2):201--230, 2000.

\bibitem{gwynne2020anomalous}
Ewain Gwynne and Tom Hutchcroft.
\newblock Anomalous diffusion of random walk on random planar maps.
\newblock {\em Probab. Theory Related Fields}, 178(1):567--611, 2020.

\bibitem{jerrum1989approximating}
Mark Jerrum and Alistair Sinclair.
\newblock Approximating the permanent.
\newblock {\em SIAM J. Comput.}, 18(6):1149--1178, 1989.

\bibitem{Krikun2008local}
Maxim Krikun.
\newblock Local structure of random quadrangulations.
\newblock {\em arXiv:math/0512304v2}, 2008.

\bibitem{leGall2013uniqueness}
Jean-Fran{\c{c}}ois Le~Gall.
\newblock Uniqueness and universality of the {B}rownian map.
\newblock {\em Ann. Probab.}, 41(4):2880--2960, 2013.

\bibitem{gall2017separating}
Jean-Fran{\c{c}}ois {Le Gall} and Thomas Leh{\'e}ricy.
\newblock Separating cycles and isoperimetric inequalities in the {U}niform
  {I}nfinite {P}lanar {Q}uadrangulation.
\newblock {\em Ann. Probab.}, 47(3):1498--1540, 2019.

\bibitem{gall2012buziosf}
Jean-Fran{\c{c}}ois Le~Gall and Gr{\'e}gory Miermont.
\newblock Scaling limits of random trees and planar maps.
\newblock {\em Probability and statistical physics in two and more dimensions},
  15:155--211, 2012.

\bibitem{paulinLG2008sphere}
Jean-Fran{\c{c}}ois Le~Gall and Fr{\'e}d{\'e}ric Paulin.
\newblock Scaling limits of bipartite planar maps are homeomorphic to the
  2-sphere.
\newblock {\em Geom. Funct. Anal.}, 18(3):893--918, 2008.

\bibitem{lehe2019fpp}
Thomas Leh{\'e}ricy.
\newblock First-passage percolation in random planar maps and {T}utte's
  bijection.
\newblock {\em arXiv preprint arXiv:1906.10079}, 2019.

\bibitem{levin2017markov}
David~A Levin and Yuval Peres.
\newblock {\em Markov chains and mixing times}, volume 107.
\newblock American Mathematical Soc., 2017.

\bibitem{menard2018volumes}
Laurent M{\'e}nard.
\newblock Volumes in the {U}niform {I}nfinite {P}lanar {T}riangulation: from
  skeletons to generating functions.
\newblock {\em Combin. Probab. Comput.}, pages 1--28, 2018.

\bibitem{miermont2013brownian}
Gr{\'e}gory Miermont.
\newblock The {B}rownian map is the scaling limit of uniform random plane
  quadrangulations.
\newblock {\em Acta Math.}, 210(2):319--401, 2013.

\end{thebibliography}

\newpage

\section*{Annex: Illustration of the mixing times}

We plot the estimated density, or rather the estimated probability mass function or PMF, of various mixing times for the uniform quadrangulation with $n=320$ vertices. The choice of $n=320$ yields a good compromise between a large $n$ to be more faithful to a possible “limit shape”, and a large number of observations for a better estimations.

\begin{figure}[!h]
\begin{center}
\includegraphics[width=0.45\textwidth]{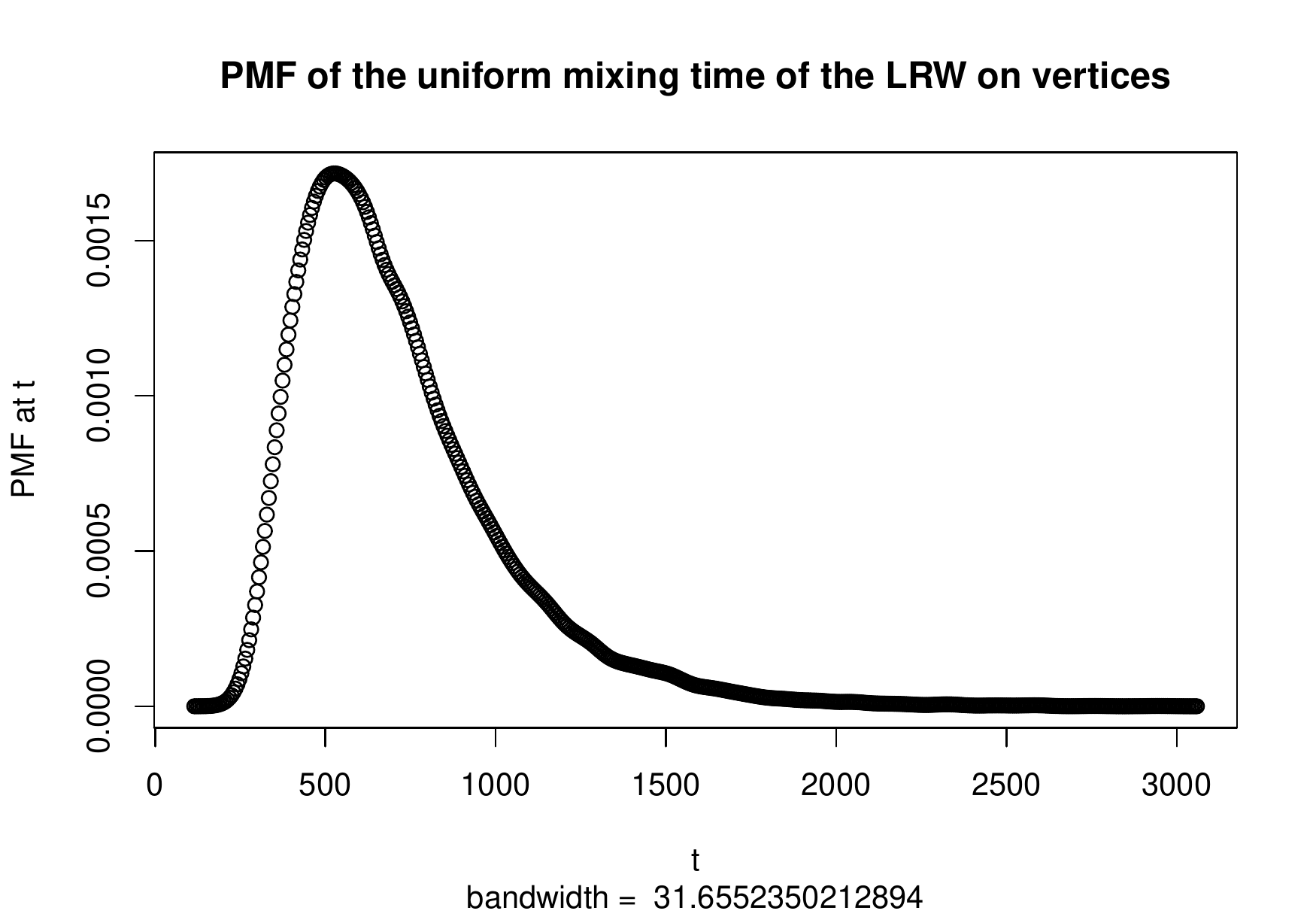}
\includegraphics[width=0.45\textwidth]{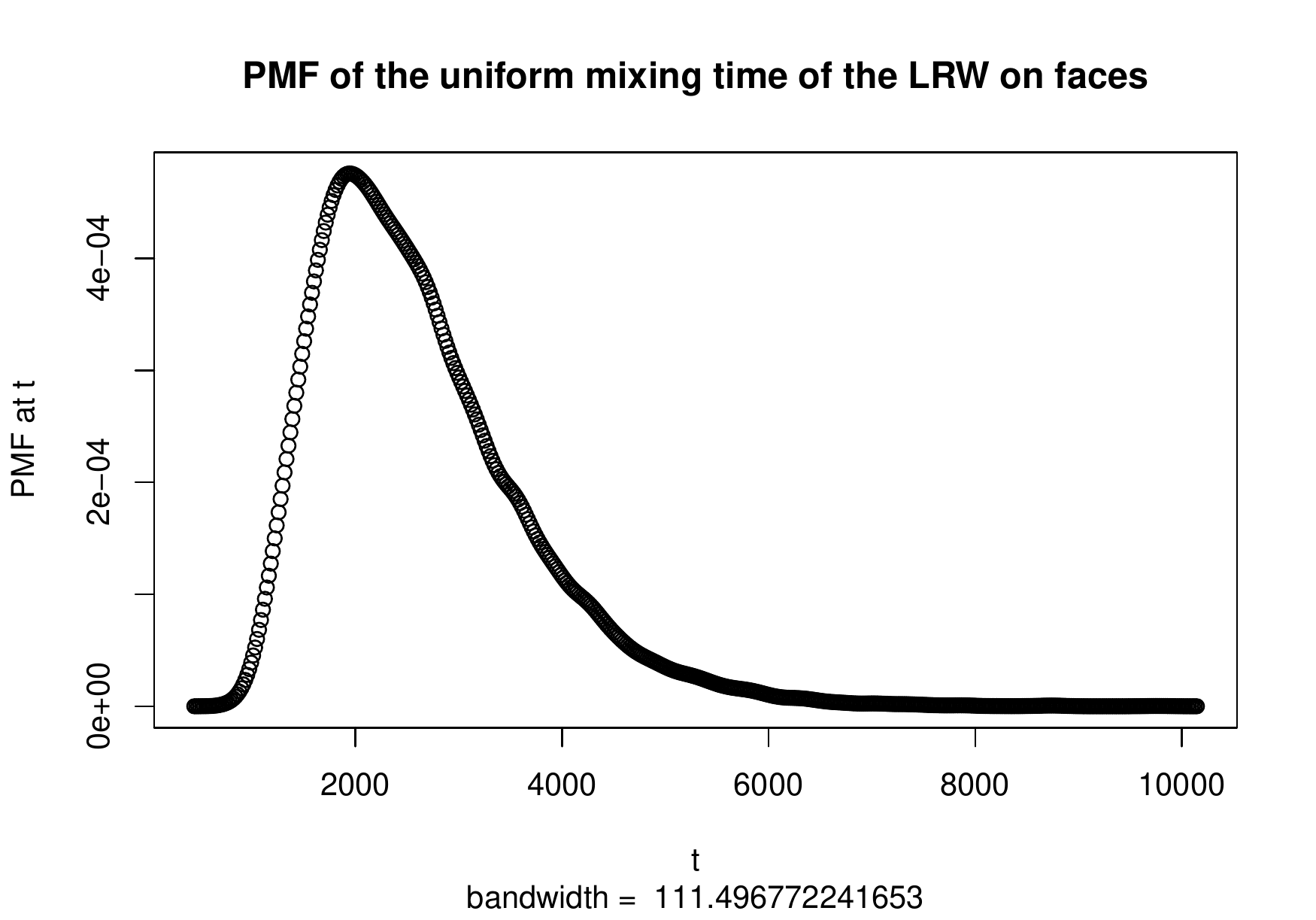}

\includegraphics[width=0.45\textwidth]{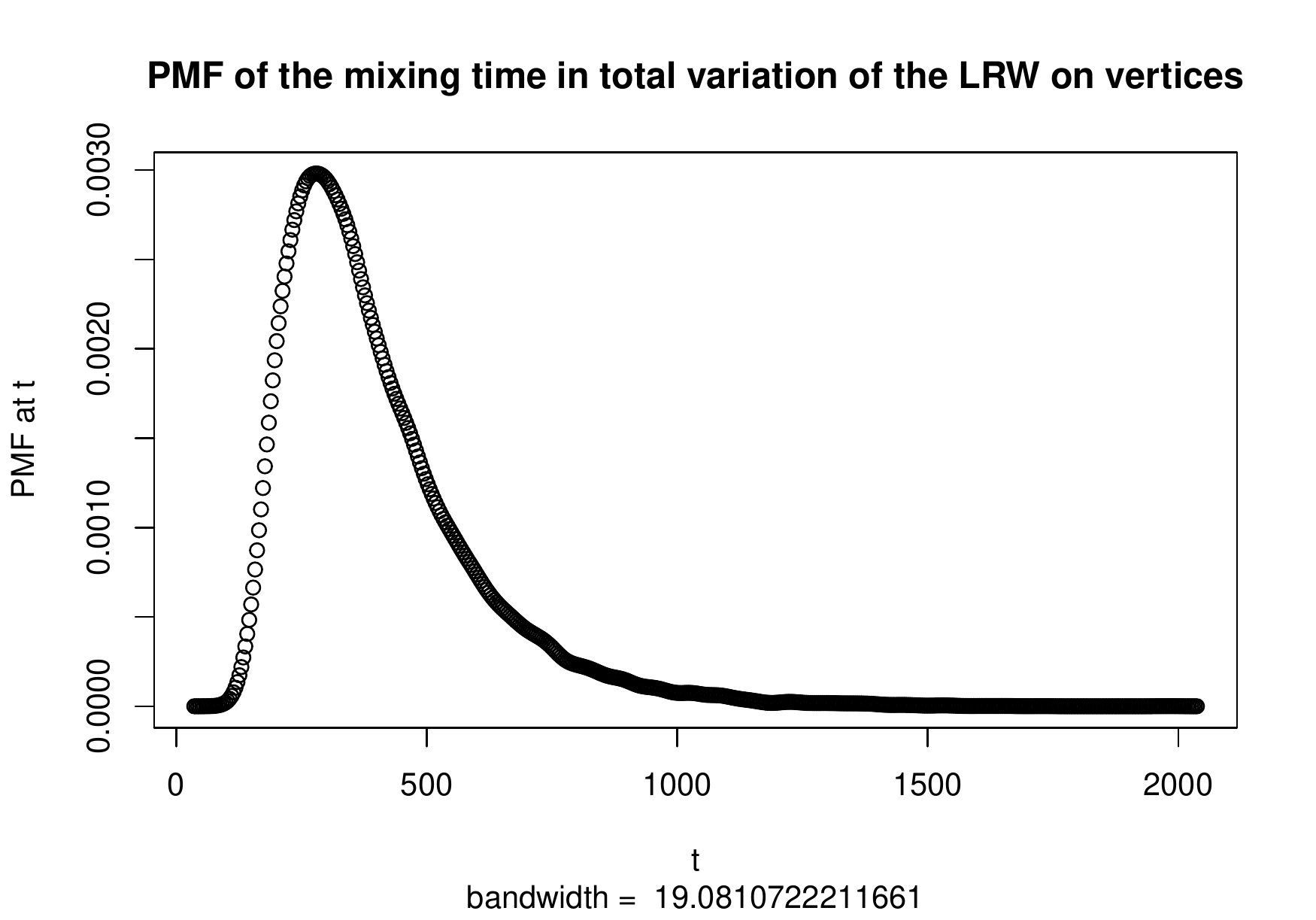}
\includegraphics[width=0.45\textwidth]{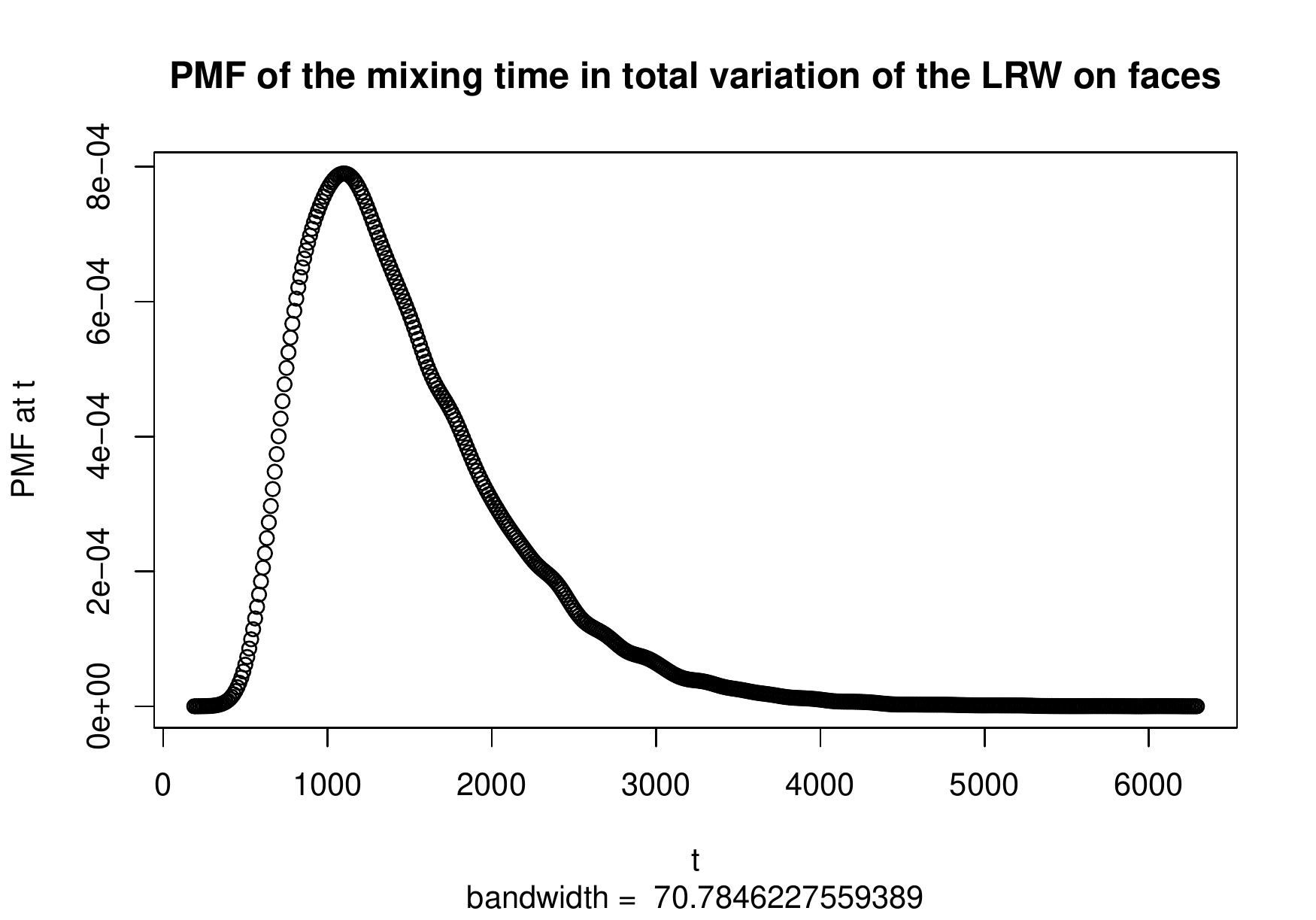}

\includegraphics[width=0.45\textwidth]{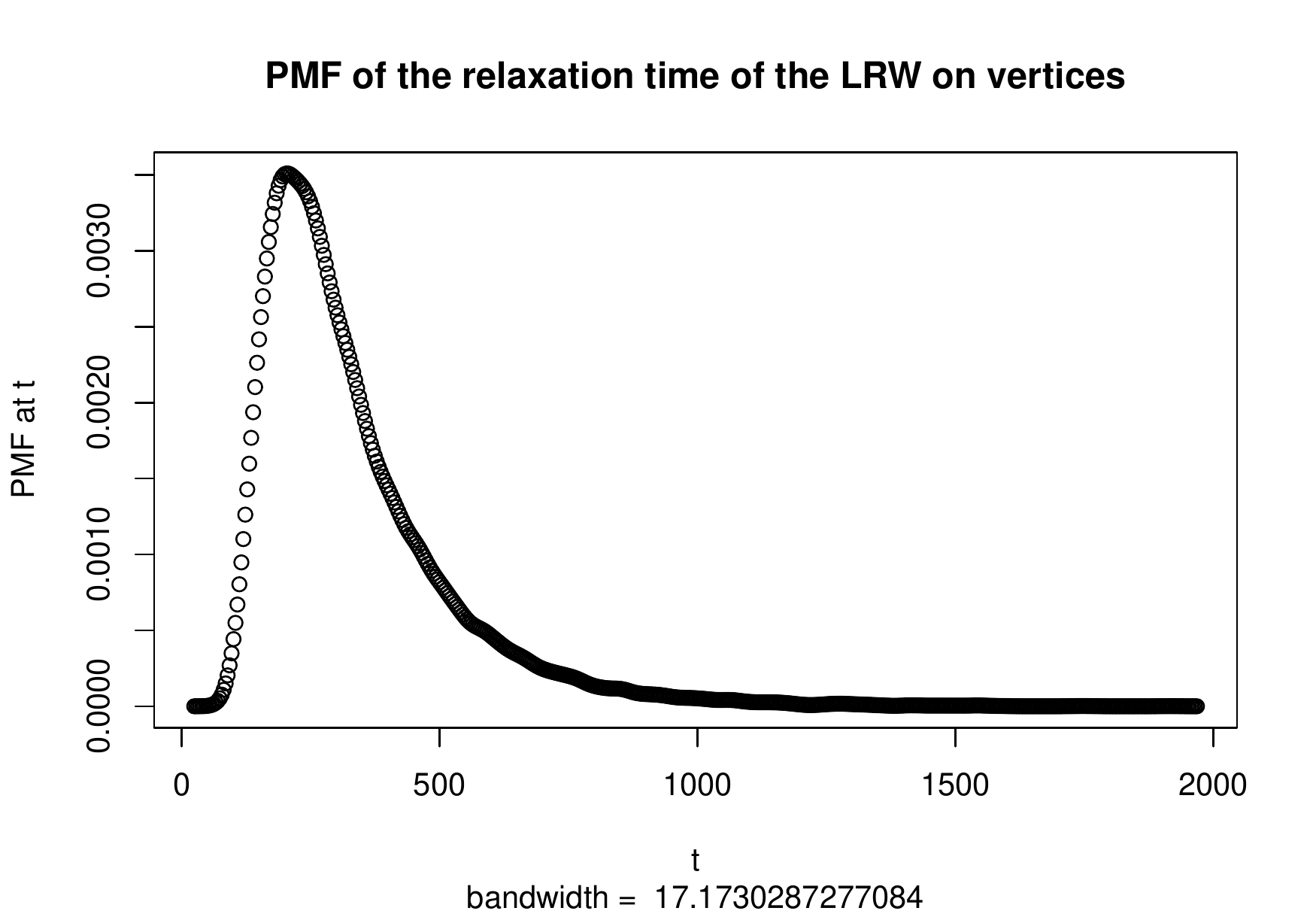}
\includegraphics[width=0.45\textwidth]{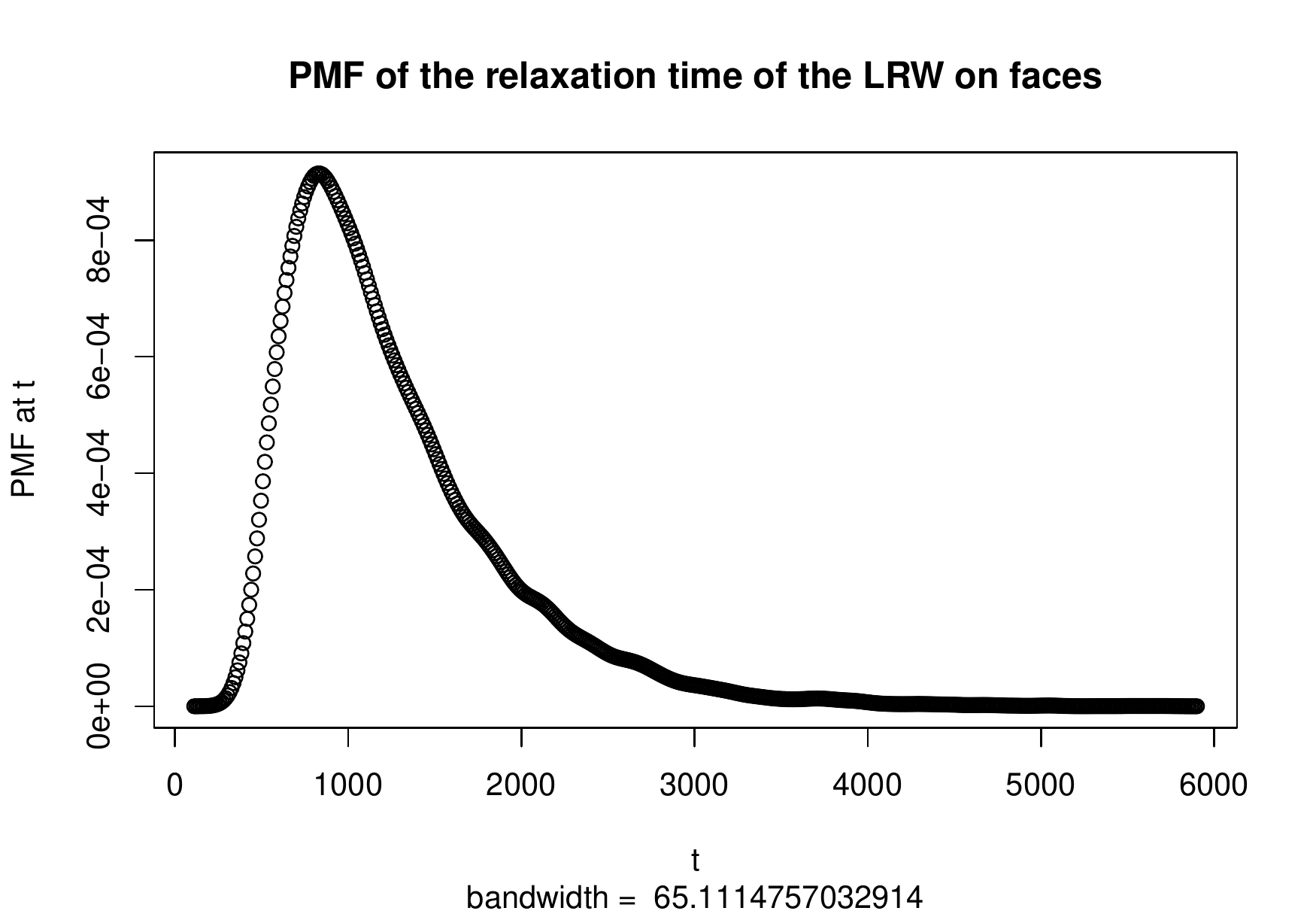}
\end{center}
\caption{Estimated PMF of various mixing times for quadrangulations with $320$ vertices. The displayed quantity is the convolution of the empirical measure by a gaussian kernel of standard deviation indicated by the “bandwidth” quantity under the graph. We note that the steepest slope is smaller than the slope of the kernel: the tail near 0 is faithfully represented. This observation is even more salliant when drawing the log-density, as in \figref{Fig_log_density_mixing_time}.}
\label{Fig_density_mixing_time}
\end{figure}

\newpage
\begin{figure}[!h]
\includegraphics[width=0.49\textwidth]{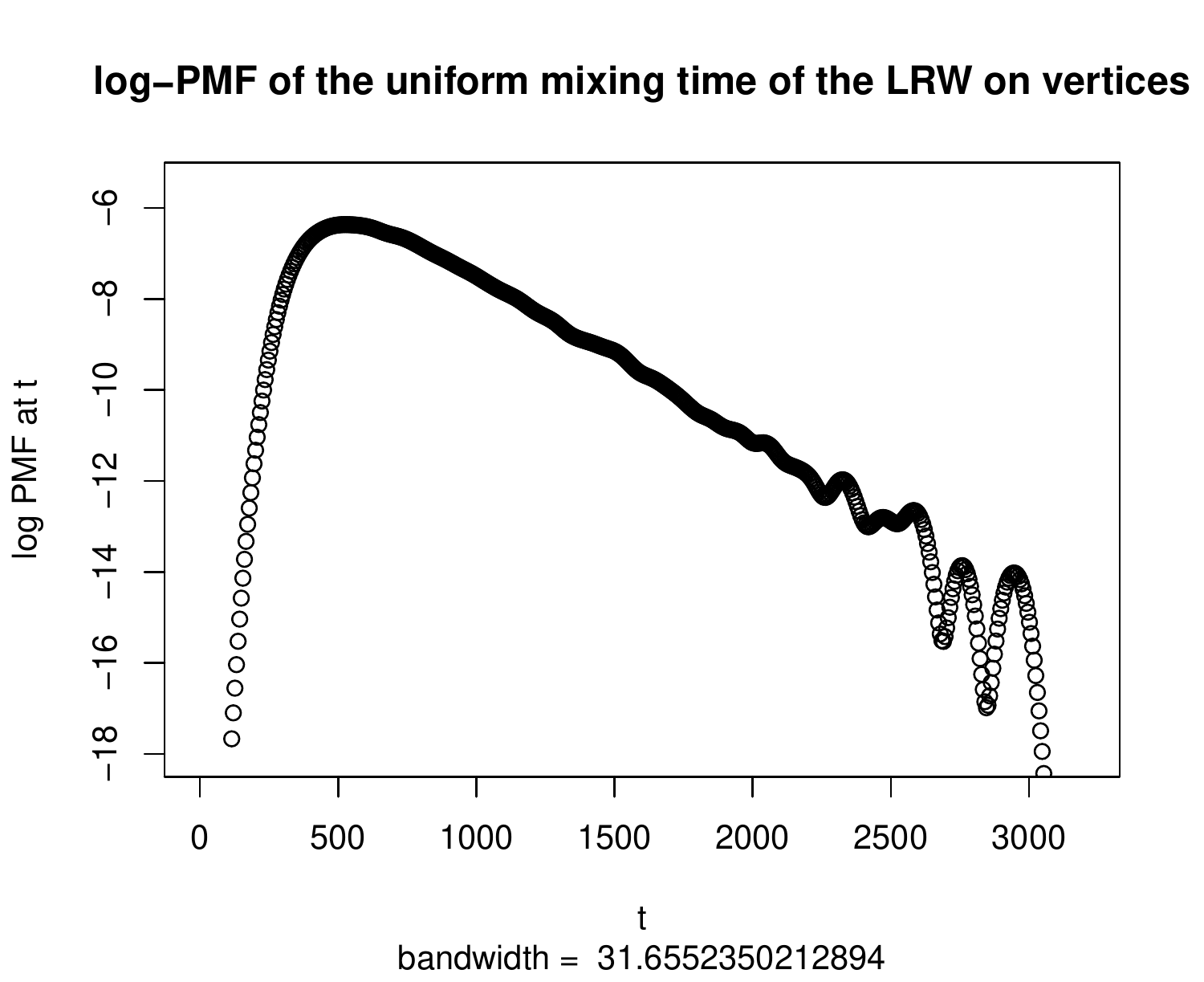}
\includegraphics[width=0.49\textwidth]{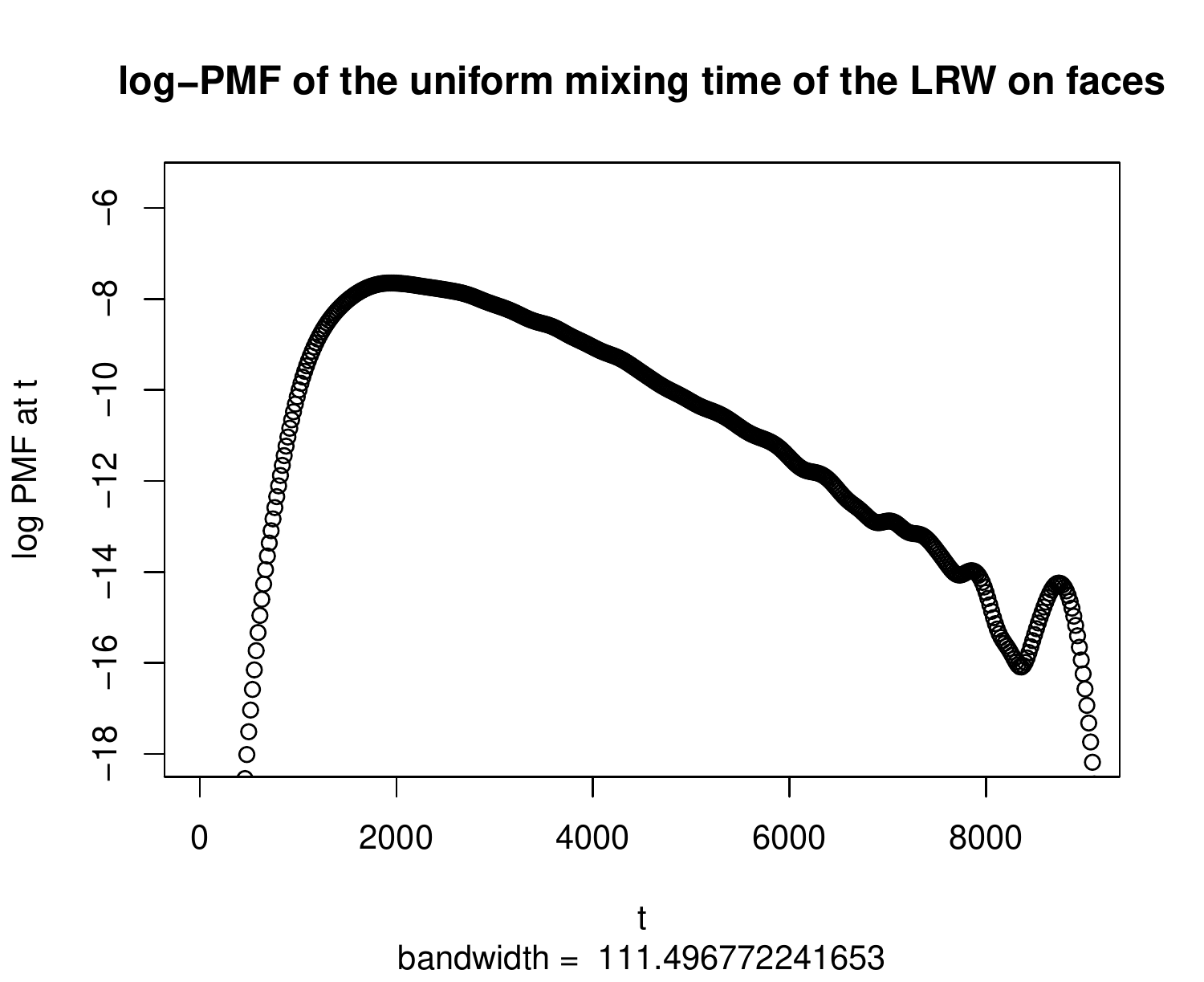}

\includegraphics[width=0.49\textwidth]{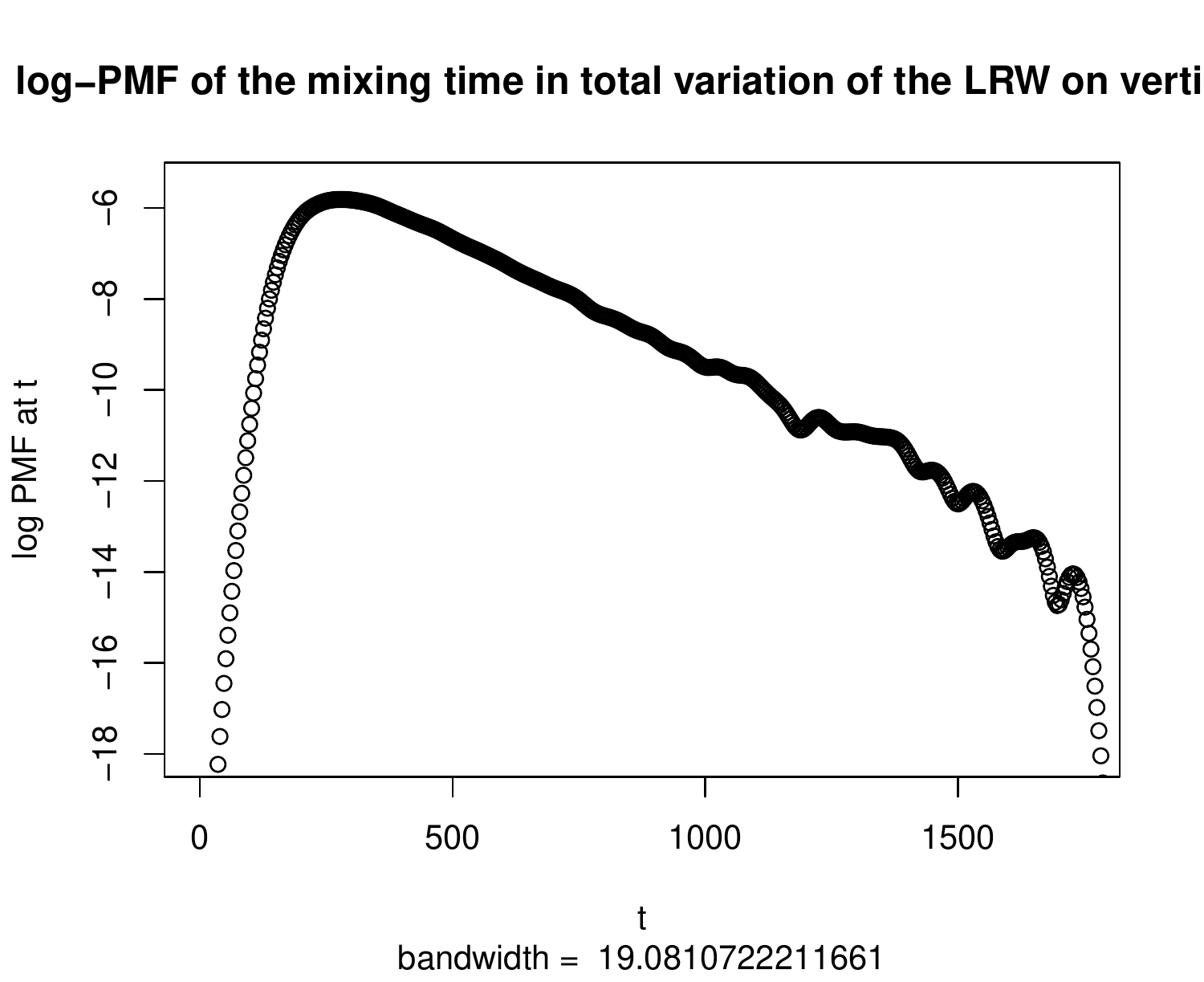}
\includegraphics[width=0.49\textwidth]{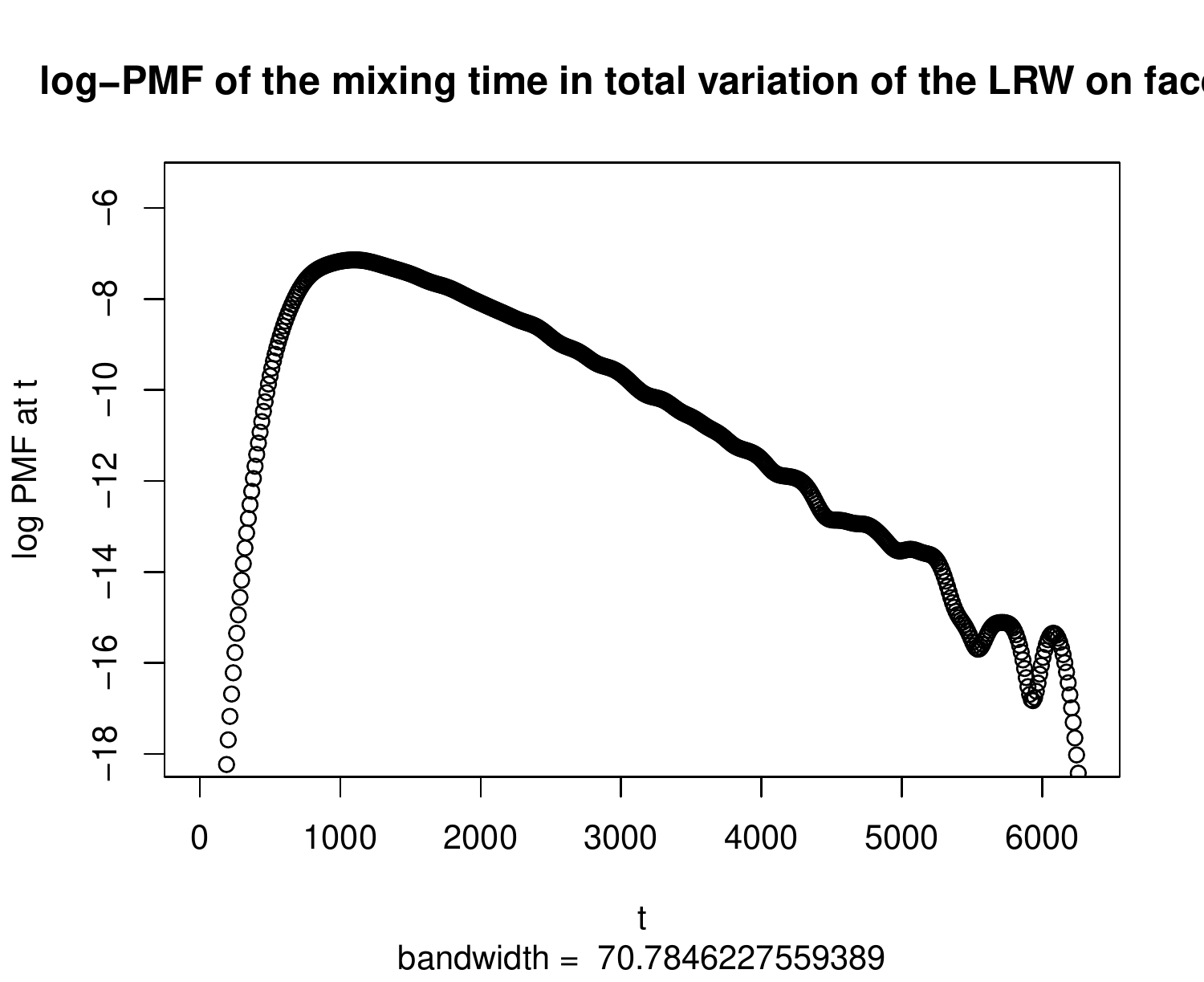}

\includegraphics[width=0.49\textwidth]{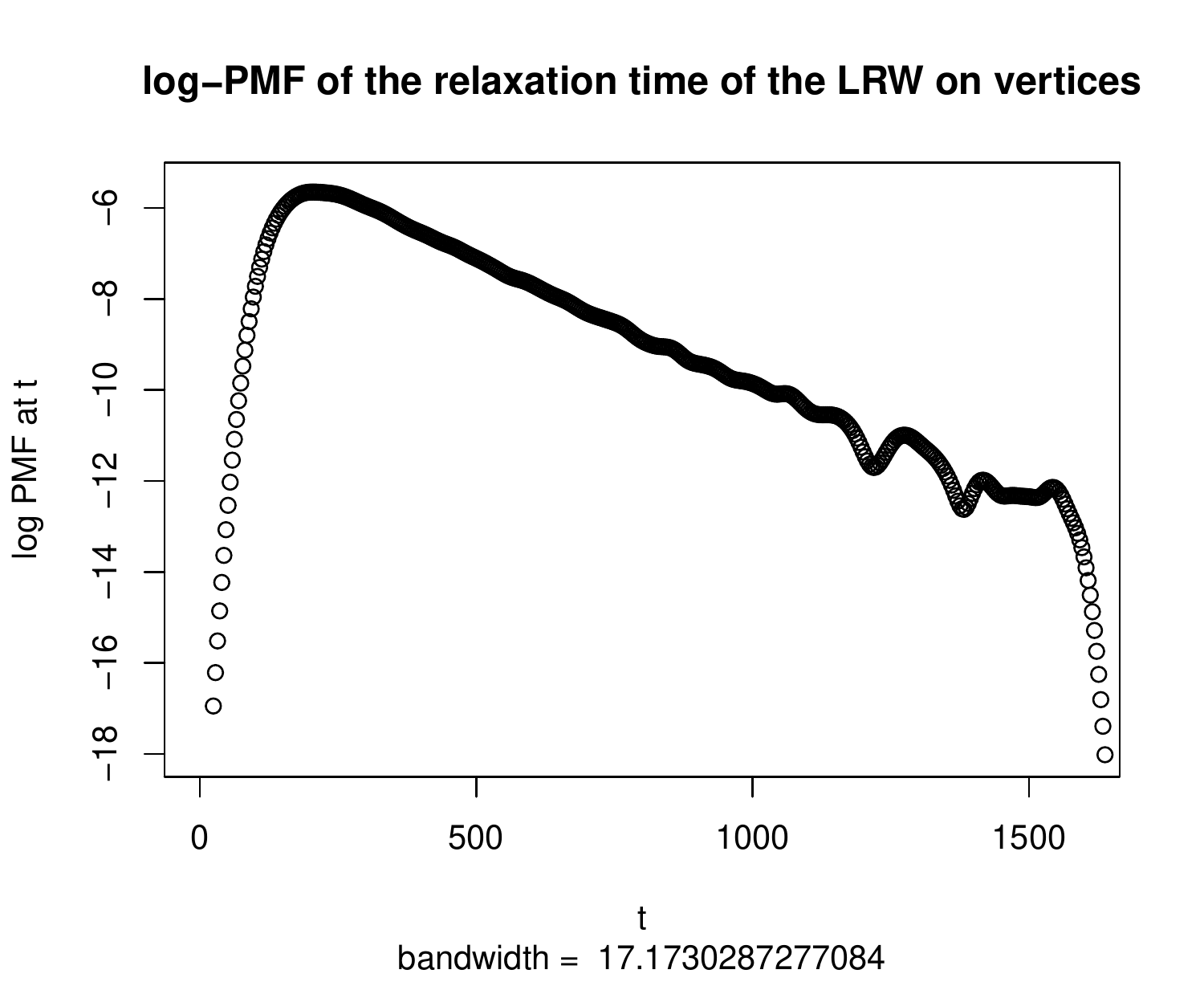}
\includegraphics[width=0.49\textwidth]{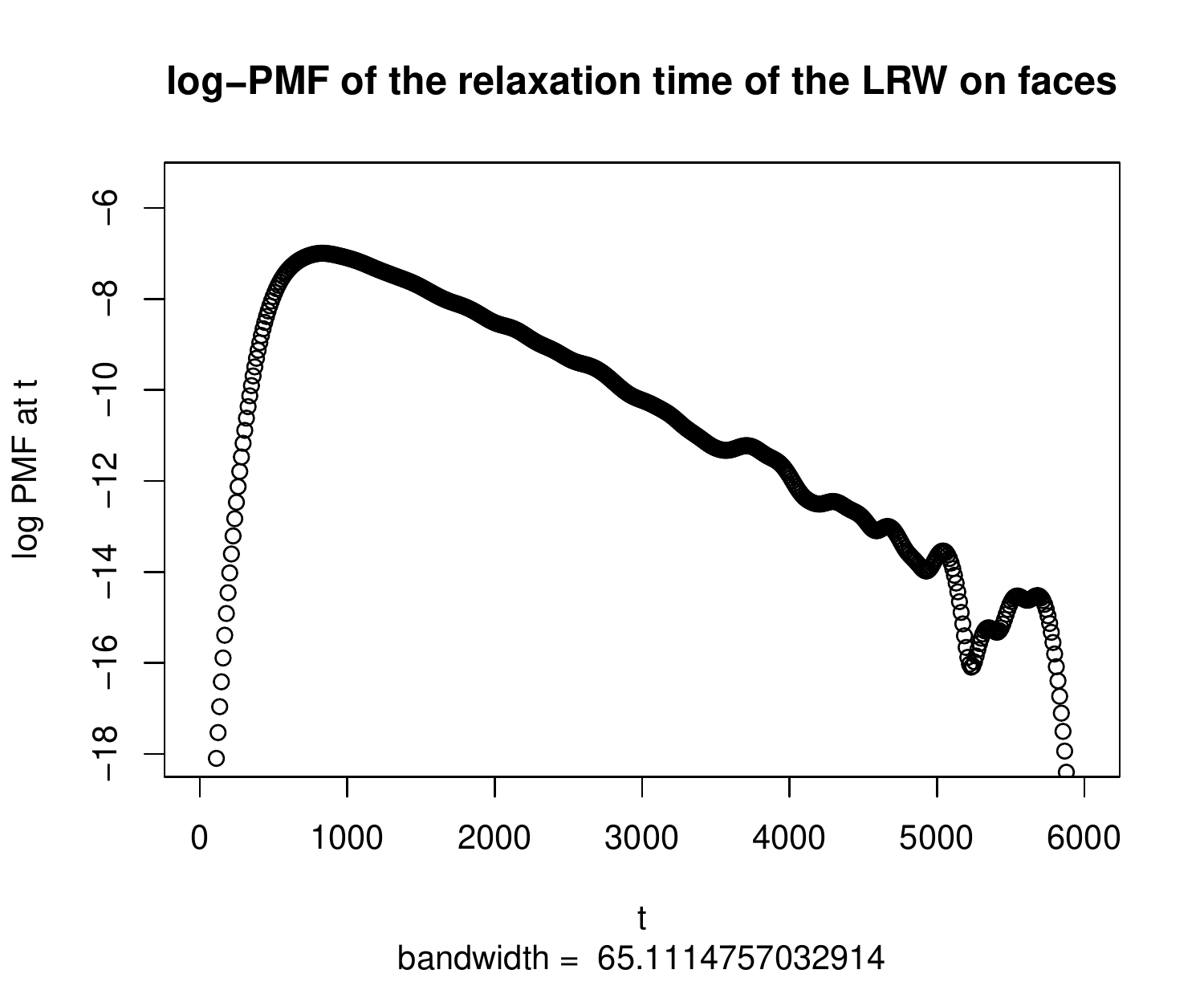}
\caption{Log of the estimated density of various mixing times for quadrangulations with $320$ vertices. The density is estimated as described in \figref{Fig_density_mixing_time}. The tail near $+\infty$ seems to be exponential.}
\label{Fig_log_density_mixing_time}
\end{figure}

\newpage
\begin{figure}[!ht]
\includegraphics[width=0.49\textwidth]{Rat-uv.pdf}
\includegraphics[width=0.49\textwidth]{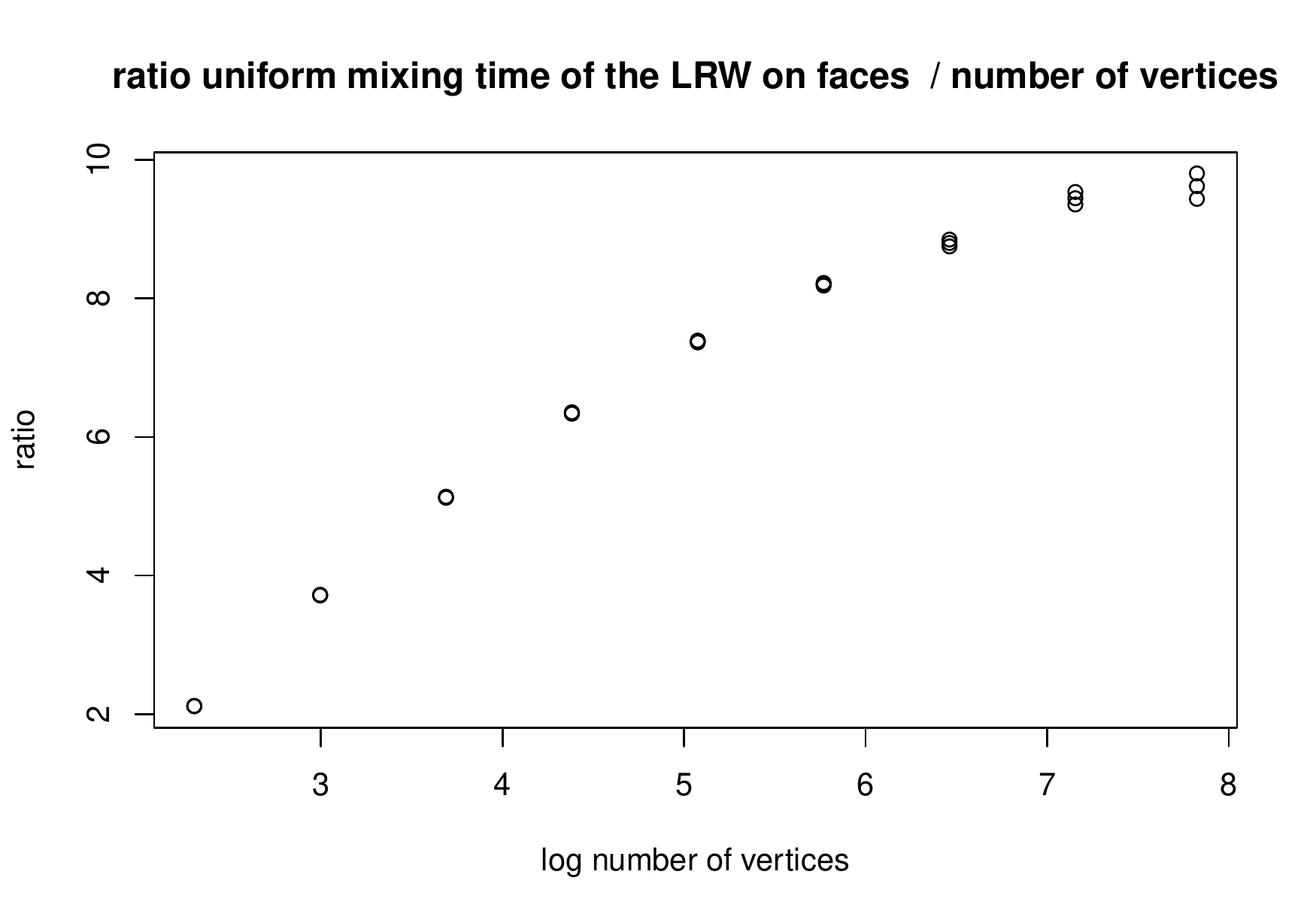}

\includegraphics[width=0.49\textwidth]{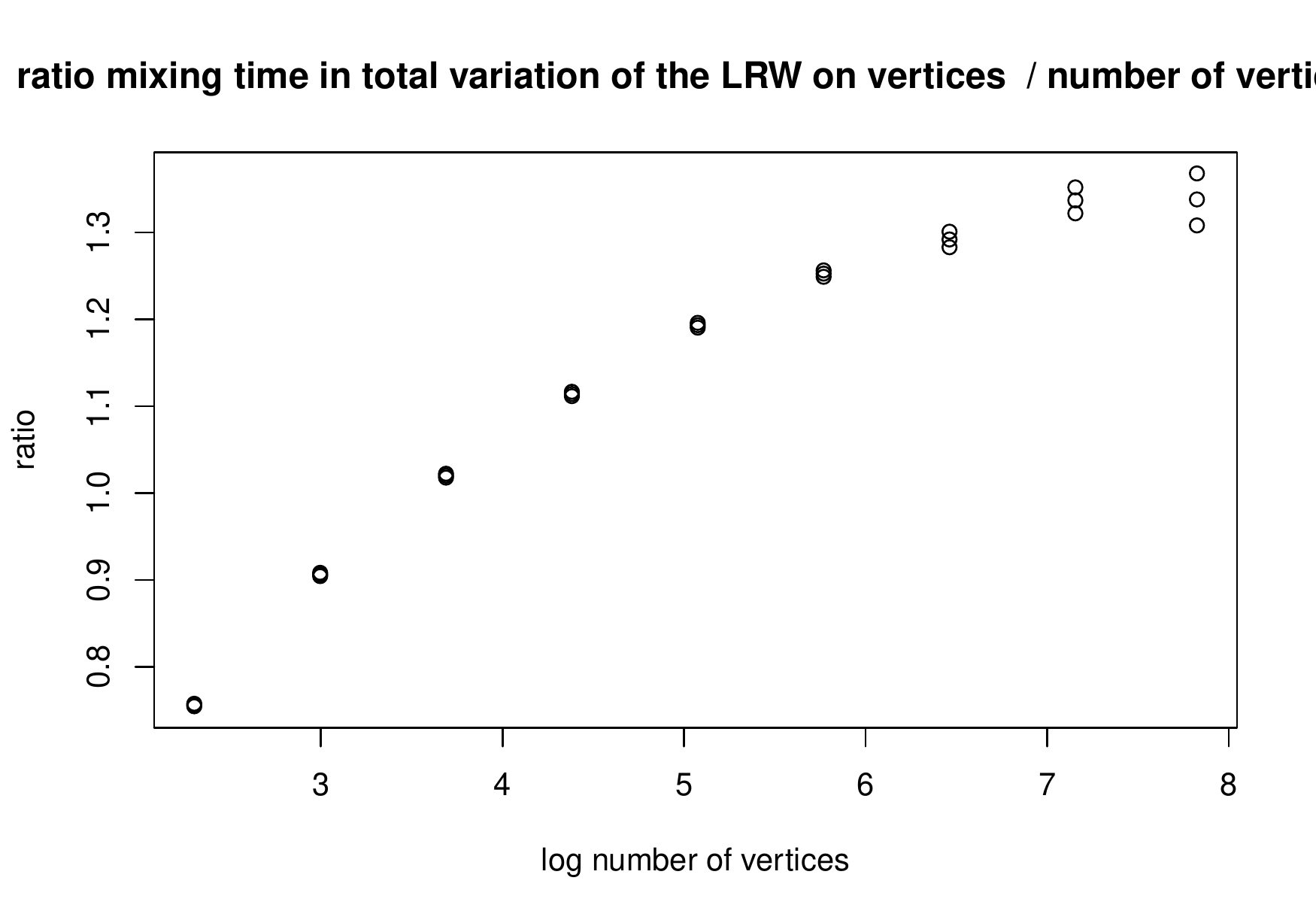}
\includegraphics[width=0.49\textwidth]{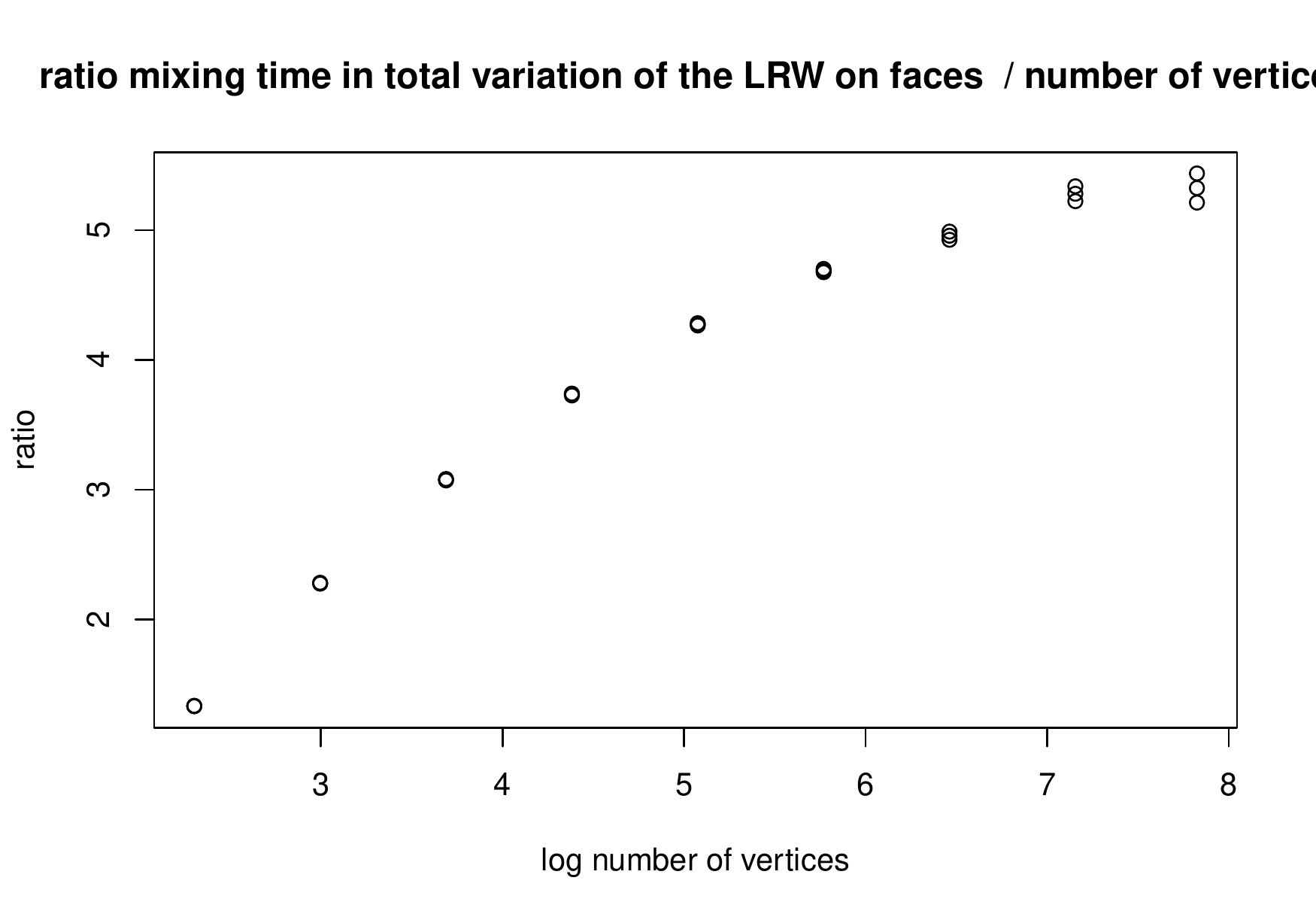}

\includegraphics[width=0.49\textwidth]{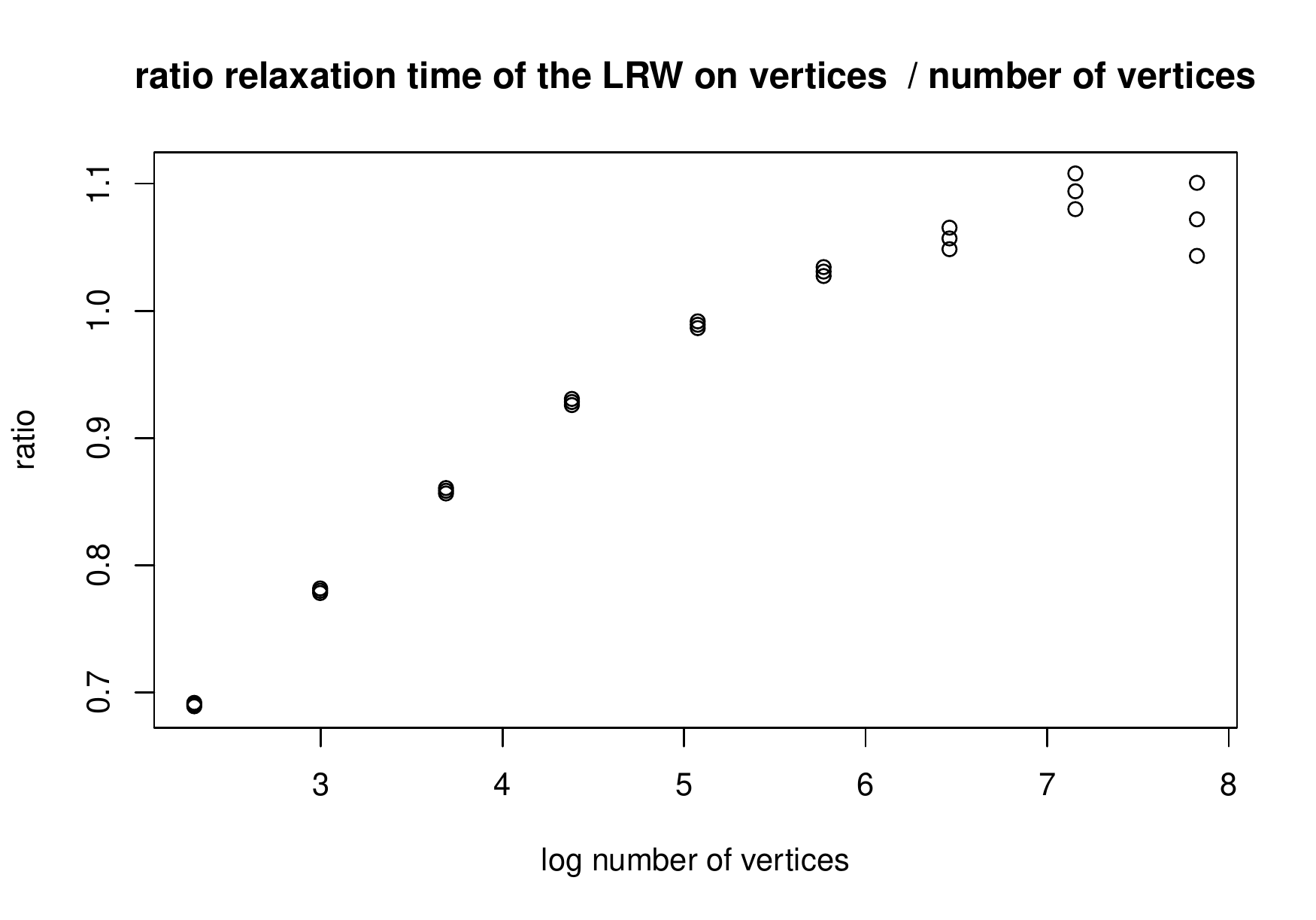}
\includegraphics[width=0.49\textwidth]{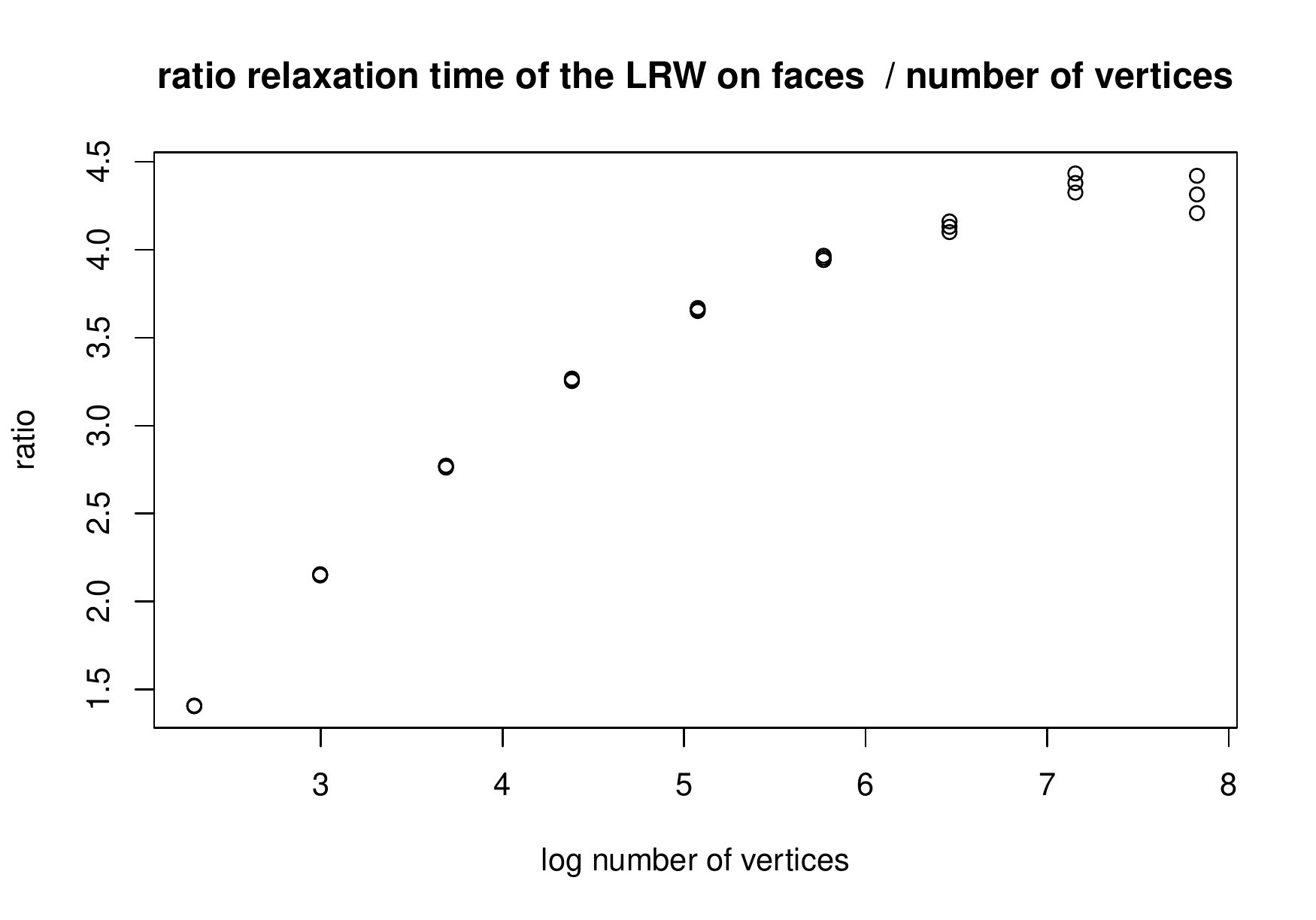}
\caption{Ratio between various mixing times and the number $n$ of vertices of the map. We display three points for each value of $n$: the central one is the empirical mean of the observations, and the top and bottom ones are at $\pm 1$ empirical standard deviation from the empirical mean. The uniform mixing times and the mixing times in total variation are taken at level $1/2$. }
\label{Fig_asymp_ratio_mixing_time}
\end{figure}

\section*{Annex: maple sheet}

\includepdf[pages=-]{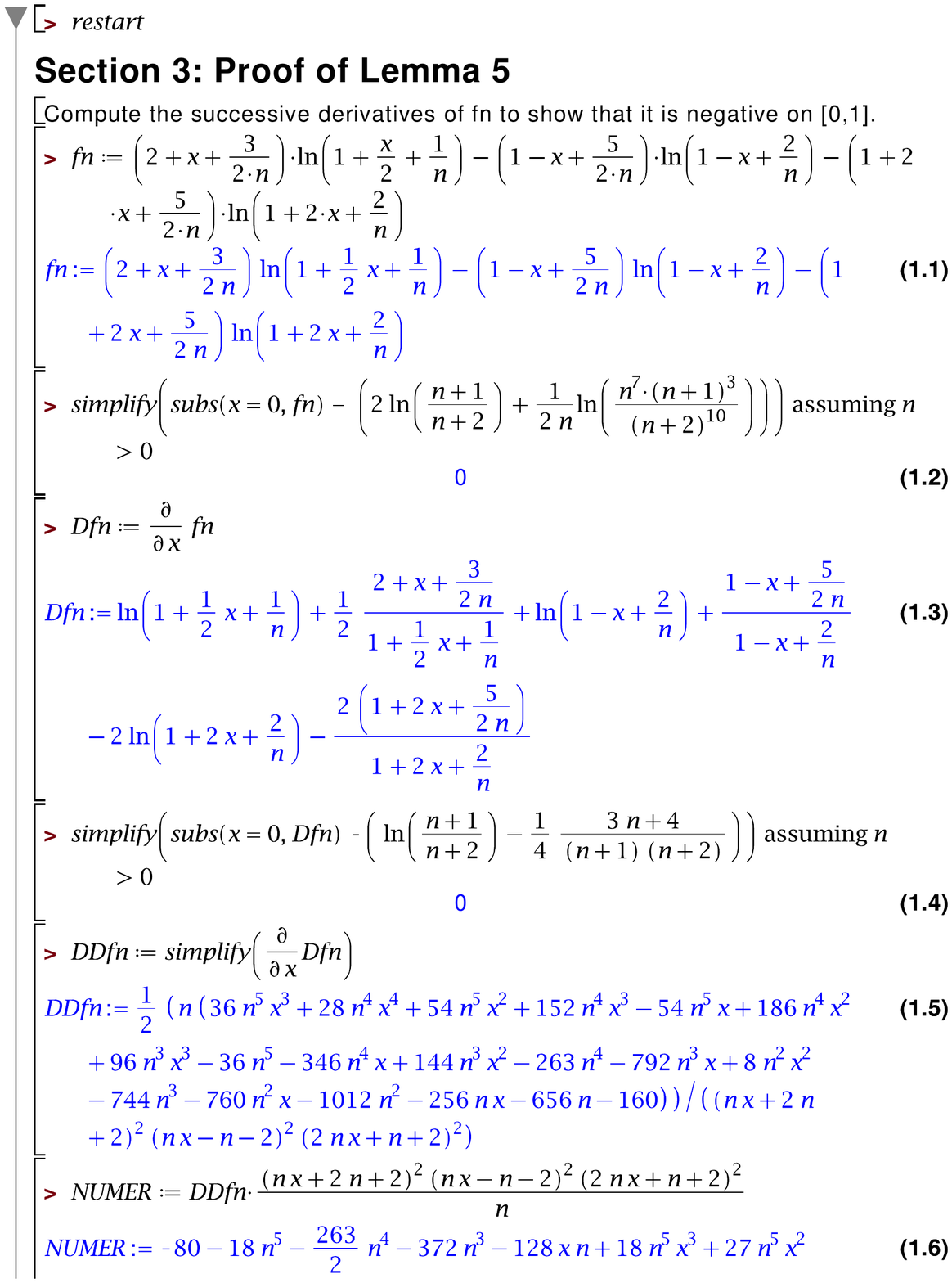}

\end{document}